\newtheorem{definition}{Definition}
\newtheorem{theorem}{Theorem}
\newtheorem{lemma}{Lemma}
\newcommand{\Sscr}{{\mathcal{S}}}
\newcommand{\Tscr}{{\mathcal{T}}}
\newcommand{\Fscr}{{\mathcal{F}}}
\newcommand{\Gscr}{{\mathcal{G}}}
\newcommand{\Bscr}{{\mathcal{B}}}
\newcommand{\Hscr}{{\mathcal{H}}}
\newcommand{\Xscr}{{\mathcal{X}}}
\newcommand{\Yscr}{{\mathcal{Y}}}
\newcommand{\Dscr}{{\mathcal{D}}}
\newcommand{\E}{{\mathbb{E}}}
\newcommand{\Rscr}{{\mathcal{R}}}
\newcommand{\ip}[1]{\left\langle #1 \right\rangle}
\title{Co-clustering of Nonsmooth Graphons}
\author{David Choi}
\begin{document}
\maketitle

\begin{abstract}
	
Performance bounds are given for exploratory co-clustering/ blockmodeling of bipartite graph data, where we assume the rows and columns of the data matrix are samples from an arbitrary population. This is equivalent to assuming that the data is generated from a nonsmooth graphon. It is shown that co-clusters found by any method can be extended to the row and column populations, or equivalently that the estimated blockmodel approximates a blocked version of the generative graphon, with estimation error bounded by $O_P(n^{-1/2})$. Analogous performance bounds are also given for degree-corrected blockmodels and random dot product graphs, with error rates depending on the dimensionality of the latent variable space.

\end{abstract}

\section{Introduction}

In the statistical analysis of network data, blockmodeling (or community detection) and its variants are a popular class of methods that have been tried in many applications, such as modeling of communication patterns \cite{blondel2008fast}, academic citations \cite{ji2014coauthorship}, protein networks \cite{airoldi2009mixed}, online behavior \cite{latouche2011overlapping,  traud2011comparing}, and ecological networks \cite{girvan2002community}. 

In order to develop a theoretical understanding, many recent papers have established consistency properties for the blockmodel. In these papers, the observed network is assumed to be generated using a set of latent variables that assign the vertices into groups (the `` communities''), and the inferential goal is to recover the correct group membership from the observed data. Various conditions have been established under which recovery is possible \cite{bickel2009nonparametric, bickel2011method} and also computationally tractable \cite{cai2015robust, chen2012fitting, krzakala2013spectral, newman2013spectral, sussman2012consistent}. Additionally, conditions are also known under which no algorithm can correctly recover the group memberships \cite{decelle2011asymptotic, mossel2013proof}. 

The existence of a true group membership is central to these results. In particular, they assume a generative model in which all members of the same group are statistically identical. This implies that the group memberships explain the entirety of the network structure.
In practice, we might not expect this assumption to even approximately hold, and the objective of finding ``true communities'' could be difficult to define precisely, so that a more reasonable goal might be to discover group labels which partially explain structure that is evident in the data. Comparatively little work has been done to understand blockmodeling from this viewpoint. 

To address this gap, we consider the problem of blockmodeling under model misspecification. We assume that the data is generated not by a blockmodel, but by a much larger nonparametric class known as a graphon. This is equivalent to assuming that the vertices are sampled from an underlying population, in which no two members are identical and the notion of a true community partition need not exist. In this setting, blockmodeling might be better understood not as a generative model, but rather as an exploratory method for finding high-level structure: by dividing the vertices into groups, we divide the network into subgraphs that can exhibit varying levels of connectivity. This is analogous to the usage of histograms to find high and low density regions in a nonparametric distribution. Just as a histogram replicates the binned version of its underlying distribution without restrictive assumptions, we will show that the blockmodel replicates structure in the underlying population when the observed network is generated from an arbitrary graphon. 

Our results are restricted to the case of bipartite graph data. Such data arises naturally in many settings, such as customer-product networks where connections may represent purchases, reviews, or some other interaction between people and products. 

The organization of the paper is as follows. Related work is discussed in Section \ref{sec: related work}. In Section \ref{sec: co-clustering}, we define the blockmodeling problem for bipartite data generated from a graphon, and present a result showing that the blockmodel can detect structure in the underlying population. In Section \ref{sec: other models}, we discuss extensions of the blockmodel, such as mixed-membership, and give a result regarding the behavior of the excess risk in such models. Section \ref{sec: proofs} contains a sketch and proof for the main theorem. Auxilliary results and extensions are proven in the Appendix.

\section{Related Works} \label{sec: related work}

The papers \cite{airoldi2013stochastic, gao2014rate, klopp2015oracle, olhede2014network}, and \cite{choi2014co} are most similar to the present work, in that they consider the problem of approximating a graphon by a blockmodel. The papers \cite{airoldi2013stochastic, gao2014rate, klopp2015oracle} and \cite{olhede2014network} consider both bipartite and non-bipartite graph data, and require the generative graphon to satisfy a smoothness condition, with \cite{gao2014rate} establishing a minimax error rate and \cite{klopp2015oracle} extending the results to a class of sparse graphon models. In a similar vein, \cite{sussman2012universally} shows consistent and computationally efficient estimation assuming a type of low rank generative model. While smoothness and rank assumptions are natural for many non-parametric regression problems, it seems difficult to judge whether they are appropriate for network data and if they are indeed necessary for good performance. 

In \cite{choi2014co} and in this present paper, which consider only bipartite graphs, the emphasis is on exploratory analysis. Hence no assumptions are placed on the generative graphon. Unlike the works which assume smoothness or low rank structure, the object of inference is not the generative model itself, but rather a blocked version of it (this is defined precisely in Section \ref{sec: co-clustering}). This is reminiscent of some results for confidence intervals in nonparametric regression, in which the interval is centered not on the generative function or density itself, but rather on a smoothed or histogram-ed version \cite[Sec 5.7 and Thm 6.20]{wasserman2006all}. The present paper can be viewed as a substantial improvement over \cite{choi2014co}; for example, Theorem \ref{th: co-blockmodel} improves the rates of convergence from $O_P(n^{-1/4})$ to $O_P(n^{-1/2})$, and also applies to computationally efficient estimates. 




\section{Co-clustering of nonsmooth graphons} \label{sec: co-clustering}

In this section, we give a formulation for co-clustering (or co-blockmodeling) in which the rows and columns of the data matrix are samples from row and column populations, and correspond to the vertices of a bipartite graph. We then present an approximation result which implies that any co-clustering of the rows and columns of the data matrix can be extended to the populations. Roughly speaking, this means that if a co-clustering ``reveals structure'' in the data matrix, then similar structure will also exist at the population level.

\subsection{Problem Formulation}

\paragraph{Data generating process} Let $A \in \{0,1\}^{m \times n}$ denote a binary $m \times n$ matrix representing the observed data. For example, $A_{ij}$ could denote whether person $i$ rated movie $j$ favorably, or whether gene $i$ was expressed under condition $j$. 

We assume that $A$ is generated by the following model, in which each row and column of $A$ is associated with a latent variable that is sampled from a population: 

\begin{definition}[Bipartite Graphon] \label{def: graphon}
Given $m$ and $n$, let $x_1,\ldots,x_m$ and $y_1,\ldots,y_n$ denote i.i.d. uniform $[0,1]$ latent variables 
\begin{align*}
x_1,\ldots,x_m  \stackrel{iid}{\sim} \operatorname{Unif}[0,1] \qquad \textrm{and} \qquad
y_1,\ldots,y_n  \stackrel{iid}{\sim} \operatorname{Unif}[0,1].
\end{align*}
Let $\omega:[0,1]^2 \mapsto [0,1]$ specify the distribution of $A \in \{0,1\}^{m \times n}$, conditioned the latent variables $\{x_i\}_{i=1}^m$ and $\{y_j\}_{j=1}^n$, 
\begin{align*}
A_{ij} & \sim \operatorname{Bernoulli}\left(\omega(x_i,y_j)\right), \qquad i \in [m], j \in [n]
\end{align*}
where the Bernoulli random variables are independent. 
\end{definition}
We will require that $\omega$ be measurable and bounded between $0$ and $1$, but may otherwise be arbitrarily non-smooth. We will use $\Xscr = [0,1]$ and $\Yscr = [0,1]$ to denote the populations from which $\{x_i\}$ and $\{y_j\}$ are sampled.

\paragraph{Co-clustering} In co-clustering, the rows and columns of a data matrix $A$ are simultaneously clustered to reveal submatrices of $A$ that have similar values. When $A$ is binary valued, this is also called blockmodeling (or co-blockmodeling). 

Our notation for co-clustering is the following. Let $K$ denote the number of clusters. Let $S \in [K]^m$ denote a vector  identifying the cluster labels corresponding to the $m$ rows of $A$, e.g., $S_i = k$ means that the $i$th row is assigned to cluster $k$. Similarly, let $T \in [K]^n$ identify the cluster labels corresponding to the $n$ rows of $A$. Given $(S,T)$, let $\Phi_A(S,T) \in [0,1]^{K \times K}$ denote the normalized sums for the submatrices of $A$ induced by $S$ and $T$:
\[ \left[ \Phi_A(S,T)\right]_{st} = \frac{1}{mn} \sum_{i=1}^m \sum_{j=1}^n A_{ij} 1(S_i = s, T_j = t), \qquad s,t \in [K].\]
Let $\pi_S \in [0,1]^K$ and $\pi_T \in [0,1]^K$ denote the fraction of rows or columns in each cluster:
\begin{equation*}
\pi_S(s) = \frac{1}{m} \sum_{i=1}^m 1(S_i = s) \qquad \text{and} \qquad \pi_T(t) = \frac{1}{n} \sum_{j=1}^n 1(T_j=t).
\end{equation*}
Let the average value of the $(s,t)$th submatrix be denoted by $\hat{\theta}_{st}$, given by 
\[ \hat{\theta}_{st} = \frac{[\Phi_A(S,T)]_{st}}{\pi_S(s) \pi_T(t)}.\]
Generally, $S$ and $T$ are chosen heuristically to make the entries of $\hat{\theta}$ far from the overall average of $A$. A common approach is to perform k-means clustering of the spectral coordinates for each row and column of $A$ \cite{rohe2012co}. Heterogeneous values of $\hat{\theta}$ can be interpreted as revealing subgroups of the rows and columns in $A$.

\paragraph{Population co-blockmodel} Given a co-clustering $(S,T)$ of the rows and columns of $A$, we will consider whether similar subgroups also exist in the unobserved populations $\Xscr$ and $\Yscr$. Let $\sigma: \Xscr \mapsto [K]$ and $\tau: \Yscr \mapsto [K]$ denote mappings that co-cluster the row and column populations $\Xscr$ and $\Yscr$. Let $\Phi_\omega(\sigma,\tau) \in [0,1]^{K \times K}$ denote the integral of $\omega$ within the induced co-clusters, or the blocked version of $\omega$:
\[ \left[ \Phi_\omega(\sigma,\tau)\right]_{st} = \int_{\Xscr \times \Yscr} \omega(x,y)\,1(\sigma(x) = s, \tau(y) = t)\, dx\, dy, \qquad s,t \in [K]. \]
Let $\Phi_\omega(S,\tau) \in [0,1]^{K \times K}$ denote the integral of $\omega$ within the induced co-clusters, over $\{x_1,\ldots,x_n\} \times \Yscr$:
\[ \left[ \Phi_\omega(S, \tau) \right]_{st} = \frac{1}{m} \sum_{i=1}^m \int_\Yscr \omega(x_i, y)\,1(S_i = s, \tau(y) = t)\, dy. \]
Let $\pi(\sigma)$ and $\pi(\tau)$ denote the fraction of the population in each cluster:
\begin{equation*} 
\pi_{\sigma}(s)= \int_\Xscr 1(\sigma(x)=s)\, dx \qquad \text{and} \qquad \pi_\tau(t) = \int_\Yscr 1(\tau(y) = t)\, dy.
\end{equation*}
Theorem \ref{th: co-blockmodel} will show that for each clustering $S,T$, there exists $\sigma:\Xscr \mapsto [K]$ and $\tau:\Yscr \mapsto [K]$ which cluster  the populations $\Xscr$ and $\Yscr$ such that $\Phi_A(S,T) \approx \Phi_\omega(S,\tau)$ and $\Phi_A(S,T) \approx \Phi_\omega(\sigma,\tau)$, as well as $\pi_S \approx \pi_{\sigma}$ and $\pi_T \approx \pi_{\tau}$, implying that subgroups found by co-clustering $A$ are indicative of similar structure in the populations $\Xscr$ and $\Yscr$.

\subsection{Approximation Result for Co-clustering}

Theorem \ref{th: co-blockmodel} states that for each $(S,T) \in [K]^m \times [K]^n$, there exists population co-clusters $\sigma_S:\Xscr\mapsto [K]$ and $\tau_{\,T}:\Yscr\mapsto [K]$ such that $\Phi_A(S,T) \approx \Phi_\omega(S,\tau_T) \approx \Phi_\omega(\sigma_S,\tau_T)$, and also $\pi_S \approx\pi_{\sigma_S}$ and $\pi_T \approx \pi_{\tau_{\,T}}$.

\begin{theorem}\label{th: co-blockmodel}
Let $A \in \{0,1\}^{m \times n}$ be generated by some $\omega$ according to Definition \ref{def: graphon}, with fixed ratio $m/n$. Let $(S,T)$ denote vectors in $[K]^m$ and $[K]^n$ respectively, with $K \leq n^{1/2}$.
\begin{enumerate}
\item For each $T \in [K]^n$, there exists $\tau_{\,T}:\Yscr \mapsto [K]$ such that
\begin{equation} \label{eq: phi_tau}
\max_{S,T \in [K]^m \times [K]^n} \| \Phi_A(S,T) - \Phi_\omega(S,\tau_{\,T})\| + \|\pi_T - \pi_{\tau_{\,T}}\| = O_P\left(\sqrt{\frac{ K^2 \log n}{n}}\right) 
\end{equation}
\item For each $S \in [K]^m$, there exists $\sigma_S:\Xscr \mapsto [K]$, such that
\begin{equation} \label{eq: phi_sigma}
\sup_{S,\tau \in [K]^m \times [K]^\Yscr} \| \Phi_\omega(S,\tau) - \Phi_\omega(\sigma_S,\tau)\| + \|\pi_S -\pi_{\sigma_S}\|  = O_P\left(\sqrt{\frac{ K^2 \log m}{m}}\right) 
\end{equation}
\item Combining \eqref{eq: phi_tau} and \eqref{eq: phi_sigma} yields
\begin{align} \label{eq: phi_both}
\max_{S,T \in [K]^m \times [K]^n} \|\Phi_\omega(\sigma_S, \tau_{\,T}) - \Phi_A(S,T)\| + \|\pi_T - \pi_{\tau_{\,T}}\| + \|\pi_S -\pi_{\sigma_S}\| \\
\nonumber \qquad {} = O_P\left(\sqrt{\frac{ K^2 \log n}{n}}\right).
\end{align}
\end{enumerate}
\end{theorem}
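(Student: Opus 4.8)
The plan is to prove \eqref{eq: phi_tau} and \eqref{eq: phi_sigma} separately and then obtain \eqref{eq: phi_both} from them by the triangle inequality, using that $m/n$ is fixed so that $\log m/m\asymp\log n/n$. For \eqref{eq: phi_tau}, introduce the intermediate quantity $\bar\Phi(S,T)_{st}=\frac1{mn}\sum_{i,j}\omega(x_i,y_j)\,1(S_i=s,T_j=t)$ and split $\Phi_A(S,T)-\Phi_\omega(S,\tau_{\,T})=[\Phi_A(S,T)-\bar\Phi(S,T)]+[\bar\Phi(S,T)-\Phi_\omega(S,\tau_{\,T})]$. Conditionally on the latent variables the first bracket is, on each $(s,t)$-block, an average of independent mean-zero bounded terms, so Bernstein's inequality and a union bound over the $K^{m+n}$ choices of $(S,T)$ and the $K^2$ blocks give $\max_{S,T}\|\Phi_A(S,T)-\bar\Phi(S,T)\| = O_P(\sqrt{K^2\log n/n})$; this is where the stated rate actually comes from, using $K\le n^{1/2}$ so that $\log K\le\frac12\log n$. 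It then remains to produce, for each $T$, a map $\tau_{\,T}$ making $\|\bar\Phi(S,T)-\Phi_\omega(S,\tau_{\,T})\|+\|\pi_T-\pi_{\tau_{\,T}}\|$ of order $K/\sqrt n$, uniformly in $S$ and $T$.

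The crux is a reduction of this second part to convexity. Set $v^T_{it}:=\frac1n\sum_{j:T_j=t}\omega(x_i,y_j)$ and $M(\tau)_{it}:=\int_\Yscr\omega(x_i,y)\,1(\tau(y)=t)\,dy$. Then $\bar\Phi(S,T)_{st}-\Phi_\omega(S,\tau)_{st}=\frac1m\sum_{i:S_i=s}(v^T_{it}-M(\tau)_{it})$, and because the sets $\{\,i:S_i=s\,\}$ partition the rows one gets $\|\bar\Phi(S,T)-\Phi_\omega(S,\tau)\|+\|\pi_T-\pi_{\tau}\|\ \lesssim\ \frac1m\sum_{i,t}|v^T_{it}-M(\tau)_{it}|+\sum_t|\pi_T(t)-\pi_\tau(t)|$; so it suffices to choose $\tau_{\,T}$ with $(M(\tau_{\,T}),\pi_{\tau_{\,T}})$ close, in this norm, to the target $(v^T,\pi_T)$. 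Now the set $\mathcal C:=\{\,(M(\tau),\pi_\tau):\tau:\Yscr\to[K]\,\}$ is \emph{convex and compact}: it is the range, over measurable $K$-partitions of $\Yscr$, of the non-atomic $\mathbb R^{m+1}$-valued measure on $\Yscr$ with components $\omega(x_1,\cdot)\,dy,\dots,\omega(x_m,\cdot)\,dy,\ dy$, which is the Dvoretzky--Wald--Wolfowitz (Lyapunov-type) convexity theorem. Choosing $\tau_{\,T}$ to attain $\operatorname{dist}((v^T,\pi_T),\mathcal C)$ and dualizing, this distance is at most $\sup_{(\alpha,\beta)}\big[\,\langle(\alpha,\beta),(v^T,\pi_T)\rangle-\sup_{\tau}\langle(\alpha,\beta),(M(\tau),\pi_\tau)\rangle\,\big]$, the supremum over the dual ball $\{\,|\alpha_{it}|\le1/m,\ |\beta_t|\le1\,\}$. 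One computes $\sup_{\tau}\langle(\alpha,\beta),(M(\tau),\pi_\tau)\rangle=\int_0^1 g_{\alpha,\beta}(y)\,dy$ where $g_{\alpha,\beta}(y):=\max_{t\in[K]}\big(\sum_i\alpha_{it}\omega(x_i,y)+\beta_t\big)$, while $\langle(\alpha,\beta),(v^T,\pi_T)\rangle=\frac1n\sum_j\big(\sum_i\alpha_{iT_j}\omega(x_i,y_j)+\beta_{T_j}\big)\le\frac1n\sum_j g_{\alpha,\beta}(y_j)$ for \emph{every} $T$. Hence $\operatorname{dist}((v^T,\pi_T),\mathcal C)\le\sup_{(\alpha,\beta)}\big|\tfrac1n\sum_j g_{\alpha,\beta}(y_j)-\int_0^1 g_{\alpha,\beta}\big|$, an empirical process over $y_1,\dots,y_n$ that no longer involves $T$.

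Since $g_{\alpha,\beta}\in[-2,2]$ and $\{\,g_{\alpha,\beta}\,\}$ is a pointwise maximum of $K$ copies of the linear class $\{\,y\mapsto\sum_i\alpha_i\omega(x_i,y)+\beta\,\}$, its Rademacher complexity is at most $K$ times that of the linear class, which is $O(n^{-1/2})$ because the coefficients have size $O(1/m)$ (concretely $\E_\varepsilon\frac1m\sum_i|\frac1n\sum_j\varepsilon_j\omega(x_i,y_j)|\le\frac1m\sum_i(\frac1{n^2}\sum_j\omega(x_i,y_j)^2)^{1/2}\le n^{-1/2}$); symmetrization and McDiarmid then yield $O_P(K/\sqrt n)$, proving \eqref{eq: phi_tau}. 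For \eqref{eq: phi_sigma} one runs the symmetric argument with the roles of rows and columns interchanged and sample size $m$: extend $S$ to $\sigma_S:\Xscr\to[K]$ by taking the partition $\{\sigma_S^{-1}(s)\}$ of $\Xscr$ nearest (in the analogous norm) to the target $(\frac1m\sum_{i:S_i=s}\delta_{x_i})_s$ measured against the functions $\{\,x\mapsto\int_B\omega(x,y)\,dy:B\subseteq\Yscr\,\}$ and the constants, and the dual certificate becomes an empirical process over $x_1,\dots,x_m$ with functions $x\mapsto\max_s\big(\int\phi_s(y)\omega(x,y)\,dy+\beta_s\big)$, $\|\phi_s\|_\infty\le1$; here the identity $\sup_{\|\phi\|_\infty\le1}\int\phi g\,dy=\|g\|_{L^1(dy)}$ makes each $\phi_s$-block contribute only $\E_\varepsilon\|\frac1m\sum_i\varepsilon_i\omega(x_i,\cdot)\|_{L^1(dy)}\le m^{-1/2}$ to the Rademacher complexity, giving the $O_P(\sqrt{K^2\log m/m})$ rate. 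The genuinely delicate point, and the step I expect to be the main obstacle, is that for \eqref{eq: phi_sigma} the family of test functions indexed by $\Yscr$ is infinite-dimensional, so convexity of the corresponding moment set does not follow from the finite-dimensional Dvoretzky--Wald--Wolfowitz theorem; rounding the ``fuzzy'' minimizer returned by the duality bound to an honest partition $\sigma_S$ without inflating the error requires an infinite-dimensional form of Lyapunov's convexity theorem (it suffices to work with the closed convex hull of the range) or a suitable approximation argument. Finally \eqref{eq: phi_both} follows from \eqref{eq: phi_tau} and \eqref{eq: phi_sigma} via $\|\Phi_\omega(\sigma_S,\tau_{\,T})-\Phi_A(S,T)\|\le\|\Phi_\omega(\sigma_S,\tau_{\,T})-\Phi_\omega(S,\tau_{\,T})\|+\|\Phi_\omega(S,\tau_{\,T})-\Phi_A(S,T)\|$, bounding the first summand by \eqref{eq: phi_sigma} with $\tau=\tau_{\,T}$ and the second by \eqref{eq: phi_tau}, together with $\log m/m\asymp\log n/n$.
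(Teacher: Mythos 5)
Your proposal follows the same overall strategy as the paper's proof: (i) uniform concentration of $\Phi_A$ around its conditional mean $\Phi_W$ via a Hoeffding/Bernstein tail bound and a union bound over the $K^{m+n}$ co-clusterings (the paper's Lemma \ref{le: mcdiarmid}); (ii) reduction of the remaining deterministic approximation to the question of whether the vector $(g_T,\pi_T)$ lies near the population-achievable set $\{(g_\tau,\pi_\tau)\}$, using Cauchy--Schwarz with $\sum_s\|1_{S=s}\|^2=m$ so that the bound is uniform in $S$; (iii) a duality/support-function argument converting distance to that convex set into a supremum, over dual vectors $H=(\alpha,\beta)$, of the difference between an empirical average and a population integral of $g_H(y)=\max_t\bigl(\sum_i\alpha_{it}\omega(x_i,y)+\beta_t\bigr)$; and (iv) a Rademacher contraction bound for this max-of-$K$-affine class plus symmetrization and McDiarmid. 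Your $g_{\alpha,\beta}$ is exactly the integrand appearing in the paper's support functions $\Gamma_{\Gscr_n}$, $\Gamma_{\Gscr}$, and your Rademacher estimate reproduces the paper's Lemma \ref{le: Biau}. The one place you take a genuinely different route is the convexity step: you invoke the Dvoretzky--Wald--Wolfowitz (Lyapunov) theorem on the range of a non-atomic $\mathbb{R}^{m+1}$-valued measure under measurable $K$-partitions, whereas the paper proves $d_{\textrm{Haus}}(\operatorname{conv}(\Gscr),\Gscr)=0$ constructively (Lemma \ref{le: convex eq1}) from an $\epsilon$-cover of $\{g_y\}$ together with Carath\'eodory. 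For part (1) these are interchangeable, and the Lyapunov route is arguably cleaner since it gives exact convexity rather than density in the convex hull.

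There is, however, a gap you yourself flag and do not close. For part (2) the target set $\Fscr=\{(f_{\sigma=1},\ldots,f_{\sigma=K},\pi_\sigma)\}$ lives in an infinite-dimensional $L^2(\Yscr)^K\times\mathbb{R}^K$, where the finite-dimensional Dvoretzky--Wald--Wolfowitz theorem does not apply as stated and the range of the vector measure need not even be closed. Saying ``work with the closed convex hull of the range'' does not by itself finish the argument: the theorem requires an honest measurable $\sigma_S:\Xscr\to[K]$, so after bounding the distance from $F_S$ to $\operatorname{conv}(\Fscr)$ one still has to round the possibly ``fuzzy'' minimizer to an actual partition without losing the rate. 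The paper resolves this concretely in Lemma \ref{le: convex eq2}: it uses that $\omega$ is a Hilbert--Schmidt kernel, truncates its singular-value decomposition to a rank-$Q$, level-$D$ approximation $\hat\omega$, shows that $\sup_\sigma\|\hat F_\sigma-F_\sigma\|\to0$ as $\min(Q,D)\to\infty$, and then runs the finite-dimensional $\epsilon$-cover/Carath\'eodory argument on the truncated $\hat\Fscr$, stitching the three error terms together. To make your sketch a complete proof you would need this (or an equivalent) finite-rank reduction; everything else in your write-up matches the paper's reasoning.
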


\paragraph{Remarks for Theorem \ref{th: co-blockmodel}} 

To give context to Theorem \ref{th: co-blockmodel}, suppose that $A \in \{0,1\}^{m \times n}$ represents product-customer interactions, where $A_{ij}=1$ indicates that product $i$ was purchased (or viewed, reviewed, etc.) by customer $j$. We assume $A$ is generated by Definition \ref{def: graphon}, meaning that the products and customers are samples from populations. This could be literally true if $A$ is sampled from a larger data set, or the populations might only be conceptual, perhaps representing future products and potential customers.

Suppose that we have discovered cluster labels $S \in [K]^m$ and $T \in [K]^n$ producing a density matrix $\hat{\theta}$ with heterogeneous values. These clusters can be interpreted as product categories and customer subgroups, with heterogeneity in $\hat{\theta}$ indicating that each customer subgroup may prefer certain product categories over others. We are interested in the following question: will this pattern generalize to the populations $\Xscr$ and $\Yscr$? Or is it descriptive, holding only for the particular customers and products that are in the data matrix $A$? 

An answer is given by Theorem \ref{th: co-blockmodel}. Specifically, \eqref{eq: phi_tau} and \eqref{eq: phi_both} show different senses in which the co-clustering $(S,T)$ may generalize to the underlying populations. \eqref{eq: phi_tau} implies that the customer population $\Yscr$ will be similar to the $n$ observed customers in the data, regarding their purchases of the $m$ observed products when aggregated by product category. \eqref{eq: phi_both} implies a similar result, but for their purchases of the entire population $\Xscr$ of products aggregated by product category, as opposed only to the $m$ observed products in the data. 

Since Theorem \ref{th: co-blockmodel} holds for all $(S,T)$, it applies regardless of the algorithm that is used to choose the co-blockmodel. It also applies to nested or hierarchical clusters. If \eqref{eq: phi_tau} or \eqref{eq: phi_both} holds at the lowest level of hierarchy with $K$ classes, then it also holds for the aggregated values at higher levels as well, albeit with the error term increased by a factor which is at most $K$.

Theorem \ref{th: co-blockmodel} controls the behavior of $\Phi_A, \pi_{S},$ and $\pi_{T}$, instead of the density matrix $\hat{\theta}$ which may be of interest. However, since $\hat{\theta}$ is derived from the previous quantities, it follows that Theorem \ref{th: co-blockmodel} also implies control of $\hat{\theta}$ for all co-clusters involving $\gg m^{1/2}$ rows or $\gg n^{1/2}$ columns.

All constants hidden by the $O_P(\cdot)$ notation in Theorem \ref{th: co-blockmodel} are universal, in that they do not depend on $\omega$ (but do depend on the ratio $m/n$). 

\section{Application of Theorem \ref{th: co-blockmodel} to Bipartite Graph Models} \label{sec: other models}

In many existing models for bipartite graphs, the rows and columns of the adjacency matrix $A \in \{0,1\}^{m\times n}$ are associated with latent variables that are not in $\Xscr$ and $\Yscr$, but in other spaces $\Sscr$ and $\Tscr$ instead. In this section, we give examples of such models and discuss their estimation by minimizing empirical squared error. We define the population risk as the difference between the estimated and actual models, under a transformation mapping $\Xscr$ to $\Sscr$ and $\Yscr$ to $\Tscr$. Theorem \ref{th: generalized} shows that the empirical error surface converges uniformly to the population risk. The theorem does not assume a correctly specified model, but rather that the data is generated by an arbitrary $\omega$ following Definition \ref{def: graphon}.

\subsection{Examples of Bipartite Graph Models} 

We consider models in which the rows and columns of $A$ are associated with latent variables that take values in spaces other than $\Xscr$ and $\Yscr$. To describe these models, we will use $S = (S_1,\ldots, S_m)$ and $T=(T_1,\ldots,T_n)$ to denote the row and column latent variables, and $\Sscr$ and $\Tscr$ to denote their allowable values. Let $\Theta$ denote a parameter space. Given $\theta \in \Theta$, let $\omega_\theta: \Sscr \times \Tscr \mapsto [0,1]$ determine the distribution of $A$ conditioned on $(S,T)$, so that the entries $\{A_{ij}\}$ are conditionally independent Bernoulli variables, with $\mathbb{P}(A_{ij}=1|S,T) = \omega_\theta(S_i,T_j)$. 

\begin{enumerate}
\item {\bf Stochastic co-blockmodel with $K$ classes:} Let $\Sscr = \Tscr = [K]$ and $\Theta = [0,1]^{K \times K}$. For $\theta \in  \Theta$, let $\omega_\theta$ be given by
\[\omega_\theta(s,t) = \theta_{s t}, \qquad s,t \in \Sscr \times \Tscr\]
where $s \in \Sscr$ and $t \in \Tscr$ are row and column co-cluster labels.
\item {\bf Degree-corrected co-blockmodel \cite{karrer2011stochastic, zhao2012consistency}:} Let $\Sscr = \Tscr = [K] \times [0,1)$ and $\Theta = [0,1]^{K \times K}$. Given $u,v \in [K]$ and $b,d \in [0,1)$, let $s = (u,b)$ and $t=(v,d)$. Let $\omega_\theta$ be given by
\[\omega_\theta(s,t) = b d\, \theta_{uv}, \qquad s,t \in \Sscr \times \Tscr.\]
In this model, $u,v \in [K]$ are co-cluster labels, and $b,d \in \lbrack 0,1)$ are degree parameters, allowing for degree heterogeneity within co-clusters. 
\item {\bf Random Dot Product \cite{hoff2002latent, sussman2012universally}:} Let $\Sscr = \Tscr = \{c \in [0,1)^d: \|c\| \leq 1\}$. Let $\omega$ be given by
\[\omega(s, t) = s^T t, \qquad s,t \in \Sscr \times \Tscr.\]

\item {\bf Dot Product + Blockmodel:} Models 1-3 are instances of a somewhat more general model. Let $\Dscr = \{c \in [0,1)^d: \|c\|\leq 1\}$. Let $\Sscr = \Tscr = [K] \times \Dscr$ and $\Theta = [0,1]^{K \times K}$. Given $u, v \in [K]$ and $b,d \in \Dscr$, let $s=(u,b)$ and $t=(v,d)$. Let $\omega_\theta$ be given by
\begin{equation}\label{eq: model}
\omega_\theta(s,t) = b^Td\, \theta_{uv}.
\end{equation}

\end{enumerate}

\subsection{Empirical and Population Risk} \label{sec: risk}


Given a data matrix $A \in \{0,1\}^{m \times n}$, and a model specification $(\Sscr, \Tscr, \Theta)$, one method for estimating $(S,T,\theta) \in \Sscr^m \times \Tscr^n \times \Theta$ is to minimize the empirical squared error $R_A$, given by
\begin{align*} 
R_A(S,T;\theta) = \frac{1}{nm} \sum_{i=1}^m \sum_{j=1}^n \left(A_{ij} - \omega_\theta(S_i,T_j)\right)^2.
\end{align*}
Generally, the global minimum of $R_A$ will be intractable to compute, so a local minimum is used for the estimate instead.


If a model $(S,T,\theta)$ is found by minimizing or exploring the empirical risk surface $R_A$, does it approximate the generative $\omega$? We will define the population risk in two different ways:
 \begin{enumerate}
\item {\bf Approximation of $\omega$ by $\omega_\theta$:} Let $\sigma$ and $\tau$ denote mappings $\Xscr \mapsto \Sscr$ and $\Yscr \mapsto \Tscr$, and let $R_\omega$ be given by
\begin{align*}
	R_\omega(\sigma, \tau; \theta) = \int_{\Xscr \times \Yscr} \left[\omega(x,y) - \omega_\theta(\sigma(x), \tau(y))\right]^2 dx dy,
\end{align*}
denoting the error between the mapping $(x,y) \mapsto \omega_\theta(\sigma(x), \tau(y), \theta)$ and the generative $\omega$. If there exists $\theta$ such that $R_\omega(\sigma,\tau;\theta)$ is low for some $\sigma:\Xscr \mapsto \Sscr$ and $\tau:\Yscr \mapsto \Tscr$, then $\omega_\theta$ (or more precisely, its transformation $(x,y) \mapsto \omega_\theta(\sigma(x), \tau(y))$ can be considered a good approximation to $\omega$.

\item {\bf Approximation of $\sigma^* = \arg \min_\sigma R_\omega(\sigma, \tau, \theta)$ by $S$:} Overloading notation, let $R_\omega(S,\tau,;\theta)$ denote
\begin{align*}
	R_\omega(S, \tau; \theta) = \frac{1}{m} \sum_{i=1}^m \int_{\Yscr} \left[\omega(x_i, y) - \omega_\theta(S_i, \tau(y))\right]^2 dy. 
\end{align*}
To motivate this quantity, consider that given $(\tau,\theta)$, the optimal partition $\sigma^*:[0,1] \mapsto [K]$ is the greedy assignment for each $x \in [0,1]$:
\[ \sigma^*(x) = \arg \min_{s \in [K]} \int_{ 0,1 } \left[\omega(x_i, y) - \omega_\theta(s, \tau(y))\right]^2 dy. \]
If there exists $(S,\theta)$ such that $R_\omega(S,\tau; \theta)$ is low for some choice of $\tau$, then $S$ can be considered a good approximation to the corresponding $\{\sigma^*(x_i)\}_{i=1}^m.$
\end{enumerate}
Theorem \ref{th: generalized} will imply that for models of the form \eqref{eq: model}, minimizing $R_A$ is asymptotically a reasonable proxy for minimizing $R_\omega$ (by both metrics described above), with rates of convergence depending on the covering numbers of $\Sscr$ and $\Tscr$.

\subsection{Convergence of the Empirical Risk Function} \label{sec: th2}

Theorem \ref{th: generalized} gives uniform bounds between $R_A$ and $R_\omega$ for models of form \eqref{eq: model}. Specifically, for each choice of $(S,T) \in \Sscr^m \times \Tscr^n$, there exists transformations $\sigma_{S}:\Xscr \mapsto \Sscr$ and $\tau_{T}:\Yscr \mapsto \Tscr$ such that $R_A(S,T;\theta) \approx R_\omega(\sigma_{S},\tau_{T};\theta) \approx R_\omega(S,\tau_{T};\theta)$, up to an additive constant and with uniform convergence rates depending on $d$ and $K$. As a result, minimization of $R_A(S,T;\theta)$ is a reasonable proxy for minimizing $R_\omega$, by either measure defined in Section \ref{sec: risk}.

In addition, the mappings $\sigma_S$ and $\tau_T$ will resemble $S$ and $T$, in that they will induce similar distributions over the latent variables. To quantify this, we define the following quantities. Given $S \in [K]^m \times \Dscr^m$, we will let $S = (U,B)$, where $U \in [K]^m$ and $B \in \Dscr^m$, and similarly let $T = (V,D)$ where $V\in [K]^n$ and $D \in \Dscr^n$. Likewise, given $\sigma:\Xscr \mapsto [K]\times \Dscr$, we will let $\sigma = (\mu, \beta)$, where $\mu:\Xscr \mapsto [K]$ and $\beta:\Xscr \mapsto \Dscr$, and similarly let $\tau = (\nu, \delta)$ where $\nu:\Yscr \mapsto [K]$ and $\delta:\Yscr \mapsto \Dscr$. Let $\Psi_S, \Psi_T, \Psi_\sigma,$ and $\Psi_\tau$ denote the CDFs of the values given by $S,T,\sigma$ and $\tau$, which are functions $[K] \times [0,1)^d \mapsto [0,1]$ equaling:
\begin{align*}
	\Psi_S(k,c) & = \frac{1}{m} \sum_{i=1}^m 1\{ U_i \leq k , B_i \leq c\} & \Psi_T(k,c) &= \frac{1}{n} \sum_{j=1}^n 1\{V_i \leq k, D_i \leq c\} \\
	\Psi_\sigma(k,c) &= \int_\Xscr 1\{\mu(x) \leq k, \beta(x) \leq c\}\,dx & \Psi_\tau(k,c) &= \int_\Yscr 1\{\nu(y) \leq k, \delta(y)\leq c \}\, dy,
\end{align*}
where inequalities of the form $c \leq c'$ for $c, c' \in [0,1)^d$ are satisfied if they hold entrywise.


\begin{theorem}\label{th: generalized}
Let $A \in \{0,1\}^{m \times n}$, with fixed ratio $m/n$, be generated by some $\omega$ according to Definition \ref{def: graphon}. 
Let $(\Sscr, \Tscr, \Theta)$ denote a model of the form \eqref{eq: model}.
\begin{enumerate}
\item For each $T \in \Tscr^n$, there exists $\tau_{\,T}: \Yscr \mapsto \Tscr$ such that
\begin{align} \label{eq: R_tau}
\max_{S,T,\theta \in \Sscr^m \times \Tscr^n \times \Theta} | R_A(S,T;\theta) - R_\omega(S, \tau_{\,T};\theta) - C_1| + \frac{\|\Psi_T - \Psi_{\tau_T}\|^2}{Kd}\\
\nonumber \quad { }  \leq O_P\left(d^{1/2}\left(\frac{K^2 \log n}{\sqrt{n}}\right)^{\frac{1}{1 + d}} \right),
\end{align}
where $C_1 \in \mathbb{R}$ is constant in $(S,T,\theta)$.
\item For each $S \in \Sscr^m $, there exists $\sigma_{S}:\Xscr \mapsto \Sscr$ such that
\begin{align} \label{eq: R_sigma}
\sup_{S,\tau,\theta \in \Sscr^m \times \Tscr^\Yscr \times \Theta} | R_\omega(S,\tau;\theta) - R_\omega(\sigma_{S}, \tau;\theta) - C_2| + \frac{\|\Psi_S - \Psi_{\sigma_S} \|^2}{Kd} \\
\nonumber \quad {} \leq O_P\left(d^{1/2}\left(\frac{K^2 \log n}{\sqrt{n}}\right)^{\frac{1}{1 + d}} \right),
\end{align}
where $C_2 \in \mathbb{R}$ is constant in $(S,\tau,\theta)$.
\item Combining \eqref{eq: R_tau} and \eqref{eq: R_sigma} yields
\begin{align*}
\max_{S,T,\theta \in \Sscr^m \times \Tscr^n \times \Theta}  | R_\omega(\sigma_{S}, \tau_{T}; \theta) - R_A(S,T; \theta) - C_1 - C_2 | + \frac{\|\Psi_S - \Psi_{\sigma_S}\|^2}{Kd}  \\
\quad { } + \frac{\|\Psi_T - \Psi_{\tau_T}\|^2}{Kd} =  O_P\left(d^{1/2}\left(\frac{K^2\log n}{\sqrt{n}}\right)^{\frac{1}{1 + d}} \right). 
\end{align*}
\end{enumerate}
\end{theorem}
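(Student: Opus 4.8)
The plan is to deduce Theorem~\ref{th: generalized} from the coupling argument behind Theorem~\ref{th: co-blockmodel}, exploiting the bilinear form $\omega_\theta(s,t)=b^{T}d\,\theta_{uv}$ together with a discretization of $\Dscr$. First I would expand the squares. Since $A_{ij}\in\{0,1\}$,
\[
R_A(S,T;\theta)=\frac{1}{mn}\sum_{ij}A_{ij}-\frac{2}{mn}\sum_{ij}A_{ij}\,\omega_\theta(S_i,T_j)+\frac{1}{mn}\sum_{ij}\omega_\theta(S_i,T_j)^2,
\]
while $R_\omega(S,\tau;\theta)=\frac1m\sum_i\int_\Yscr\omega(x_i,y)^2\,dy-\frac2m\sum_i\int_\Yscr\omega(x_i,y)\omega_\theta(S_i,\tau(y))\,dy+\frac1m\sum_i\int_\Yscr\omega_\theta(S_i,\tau(y))^2\,dy$. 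The leading terms $\frac1{mn}\sum_{ij}A_{ij}$ and $\frac1m\sum_i\int_\Yscr\omega(x_i,y)^2\,dy$ are each constant in $(S,T,\theta)$, and their difference is absorbed into $C_1$. Substituting $\omega_\theta(S_i,T_j)=\sum_a B_{ia}D_{ja}\theta_{U_iV_j}$, the cross term equals $\sum_{u,v}\theta_{uv}\sum_a \Gamma^{(a)}_{uv}$ with $\Gamma^{(a)}_{uv}=\frac1{mn}\sum_{ij}A_{ij}B_{ia}D_{ja}\,1(U_i=u,V_j=v)$, and the quadratic term equals $\sum_{u,v}\theta_{uv}^2\sum_{a,b}\Lambda^{(ab)}_{uv}$ with $\Lambda^{(ab)}_{uv}=\frac1{mn}\sum_{ij}B_{ia}B_{ib}D_{ja}D_{jb}\,1(U_i=u,V_j=v)$; the population risk decomposes identically into $\widetilde\Gamma^{(a)}_{uv},\widetilde\Lambda^{(ab)}_{uv}$, with the column sum replaced by an integral against $(\nu,\delta)$. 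Since $\theta_{uv},\theta_{uv}^2\in[0,1]$, the left side of \eqref{eq: R_tau} is bounded, up to the $\Psi$-term, by $2\sum_{u,v,a}|\Gamma^{(a)}_{uv}-\widetilde\Gamma^{(a)}_{uv}|+\sum_{u,v,a,b}|\Lambda^{(ab)}_{uv}-\widetilde\Lambda^{(ab)}_{uv}|$, so the task reduces to controlling these $\Phi$-type differences uniformly over $S,T,\theta$.

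Each $\Gamma^{(a)}_{uv}$ has the form of an entry of $\Phi_A$, except that the binary $A_{ij}$ is replaced by the $[0,1)$-valued, row-dependent weight $A_{ij}B_{ia}D_{ja}$ and the block labels are $(U,V)$; likewise $\Lambda^{(ab)}_{uv}$ with weight $B_{ia}B_{ib}D_{ja}D_{jb}$ and no ``$A$''. I would cover $\Dscr$ by an $\varepsilon$-net $\Dscr_\varepsilon$ of size $O(\varepsilon^{-d})$ and quantize each $D_j$ to its nearest net point, so the refined column label $(V_j,\mathrm{quant}(D_j))$ takes at most $K'=K|\Dscr_\varepsilon|$ values; replacing $D_{ja}$ (resp.\ $D_{ja}D_{jb}$) by its quantized value inside $\Gamma^{(a)}_{uv}$ (resp.\ $\Lambda^{(ab)}_{uv}$) costs $O(\varepsilon)$ since the remaining factors lie in $[0,1]$. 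After this replacement the quantity is a weighted version of $\Phi_A$ over at most $K'$ refined blocks, and applying the construction of $\tau_T$ from the proof of Theorem~\ref{th: co-blockmodel} to the refined labels --- in the form that permits $[0,1]$-valued weights, which that proof accommodates since it uses only boundedness --- yields a single map $\tau_T:\Yscr\mapsto\Tscr$, obtained by extending the \emph{unquantized} labels column-wise so that the first- and second-moment marginals (hence both the $\|\Psi_T-\Psi_{\tau_T}\|$ term and the $\Lambda$ differences) are controlled simultaneously, with $|\Gamma^{(a)}_{uv}-\widetilde\Gamma^{(a)}_{uv}|,|\Lambda^{(ab)}_{uv}-\widetilde\Lambda^{(ab)}_{uv}|=O_P\!\big(\sqrt{(K')^2\log n/n}\big)+O(\varepsilon)$ uniformly. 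Summing over the $O(K^2d^2)$ terms and choosing $\varepsilon$ to balance the $\varepsilon$-bias against the $\varepsilon^{-d}$ growth of $K'$ produces the exponent $\tfrac1{1+d}$ and the stated bound; uniformity over the continuous spaces $\Sscr^m,\Tscr^n,\Theta$ follows from the same net and the bilinear (hence Lipschitz) dependence of $\omega_\theta$ on $b,d$.

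For part~2 I would integrate $y$ out first: $R_\omega(S,\tau;\theta)$ is an average over the i.i.d.\ uniform sample $x_1,\dots,x_m$ of a function of $(x_i,S_i)$ whose dependence on $(\tau,\theta)$ enters only through the $[0,1]$-valued moments $\int_{\{\nu=v\}}\omega(x_i,y)\delta_a(y)\,dy$ and $\int_{\{\nu=v\}}\delta_a(y)\delta_b(y)\,dy$, which play the role of $\widetilde\Gamma,\widetilde\Lambda$ but in the row variable. Hence the same discretization-plus-Theorem~\ref{th: co-blockmodel} argument, now applied to $\{x_i\}$ and the arbitrary row labeling $S$ (and $m\asymp n$, so $\log m\asymp\log n$), produces $\sigma_S:\Xscr\mapsto\Sscr$ satisfying \eqref{eq: R_sigma}, with the supremum over all $\tau:\Yscr\mapsto\Tscr$ causing no extra loss because $\tau$ only ever appears through those bounded moments. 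Part~3 is then the triangle inequality: \eqref{eq: R_tau} with the chosen $\tau_T$ gives $R_A(S,T;\theta)\approx R_\omega(S,\tau_T;\theta)+C_1$, and \eqref{eq: R_sigma} applied with $\tau=\tau_T$ gives $R_\omega(S,\tau_T;\theta)\approx R_\omega(\sigma_S,\tau_T;\theta)+C_2$; the two $\Psi$-terms add.

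The main obstacle I anticipate is making the reduction to Theorem~\ref{th: co-blockmodel} quantitatively sharp. One must extract from that theorem's proof a coupling lemma that (i) permits $[0,1]$-valued, row-dependent edge weights in place of the binary $A_{ij}$, and (ii) tracks the dependence on the number of blocks accurately enough that, after setting $K'=K\varepsilon^{-d}$ and aggregating the error over all $(u,v,a,b)$, optimization of $\varepsilon$ reproduces \emph{precisely} the factor $d^{1/2}$ and the $K^2$ inside the parentheses --- a naive term-by-term bound is lossy in $K$ and $d$, so the aggregation should instead be organized as a single chaining argument over $\Sscr^m\times\Tscr^n\times\Theta$ anchored by the coupling and exploiting $\|b\|,\|d\|\le1$. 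One also has to verify that the data-adaptive construction of $\tau_T$ (and $\sigma_S$) that resolves the nonsmoothness of $\omega$ is unaffected by the refinement of the block labels, i.e.\ that the same map serves for all $O(K^2d^2)$ functionals at once.
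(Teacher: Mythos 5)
Your outline follows the same route the paper takes: expand the quadratic losses so that the $S,T,\theta$-dependent part reduces to block averages of bounded weights, cover $\Dscr$ by an $\varepsilon$-net of size $O((\sqrt d/\varepsilon)^d)$, pay an $O(\varepsilon)$ quantization cost, run a Theorem~\ref{th: co-blockmodel}-type coupling on the refined labels, and choose $\varepsilon$ to balance the bias against the $\bar K\asymp K(\sqrt d/\varepsilon)^d$-dependent statistical term, which is exactly where the $1/(1+d)$ exponent and the $d^{1/2}$ prefactor come from.

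The one place where your sketch and the paper diverge is the step you yourself flag as the obstacle. Term-by-term summation over the $O(K^2d^2)$ functionals $\Gamma^{(a)}_{uv},\Lambda^{(ab)}_{uv}$ is indeed lossy, but the paper does not fix this with a chaining argument over $\Sscr^m\times\Tscr^n\times\Theta$. Instead it stacks everything that needs to be coupled --- the weighted first-moment block averages $g^{\bar D}_{V=k}$, the second-moment blocks $\pi^{\bar D}_{V=k}$, and the CDF $\Psi_{\bar T}$ --- into a single augmented vector $G^{\bar D}_V$, re-proves the support-function and Hausdorff-convexity lemmas (Lemmas~\ref{le: th2 le2}--\ref{le: th2 le3}) for the sets $\bar\Gscr_n,\bar\Gscr$ of such vectors, and defines $\bar\tau$ as the single minimizer of $\|G^{\bar D}_V-G^{\bar\delta}_\nu\|$. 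Coupling all the needed moment functionals to one $\tau_T$ is then automatic, and no loss is incurred because the norms of the stacked pieces sum to $O(1)$ rather than $O(Kd)$. A second, smaller organizational difference: you fold $A_{ij}$ into $\Gamma^{(a)}_{uv}$, whereas the paper first handles $R_A\to R_W$ at the quantized block level by a direct application of Lemma~\ref{le: mcdiarmid} with $\bar K$ blocks (Lemma~\ref{le: th2 aux2}), and only then couples $R_W\to R_\omega$ via the weighted quantities $\Phi_Z,\Phi_\zeta$ (Lemma~\ref{le: th2 aux3}); this keeps the edge-noise concentration and the population coupling cleanly separated. Your observation that the Theorem~\ref{th: co-blockmodel} machinery only needs boundedness, not binarity, is correct and is precisely what licenses the reuse.
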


\paragraph{Remarks for Theorem \ref{th: generalized}}

Theorem \ref{th: generalized} states that any assignment $S$ and $T$ of latent variables to the rows and columns can be extended to the populations, such that the population exhibits a similar distribution of values in $\Sscr$ and $\Tscr$, and the population risk as a function of $\theta$ is close to the empirical risk.

The theorem may also be viewed as an oracle inequality, in that for any fixed $S$ and $T$, minimizing $\theta \mapsto R_A(S,T,\theta)$ is approximately equivalent to minimizing $\theta \mapsto R_\omega(\sigma_S, \tau_{\,T}, \theta)$, as if the model $\omega$ were known. This implies that the best parametric approximation to $\omega$ can be learned, for any choice of $\sigma_S$ and $\tau_T$. However, it is not known whether the mappings $S \mapsto \sigma_S$ and $T \mapsto \tau_T$ are approximately onto; if not, minimization of $R_A$ over $(S,T,\theta)$ is a reasonable proxy for minimization of $R_\omega$ over $(\sigma,\tau,\theta)$, but only over a subset of the possible mappings $\sigma:\Xscr \mapsto \Sscr$ and $\tau:\Yscr \mapsto \Tscr$.

The convergence of $\Psi_S$ to $\Psi_{\sigma_S}$ is established in Euclidean norm. This implies pointwise convergence at every continuity point of $\Psi_{\sigma_S}$, thus implying weak convergence and also convergence in Wasserstein distance. 

The proof is contained in Appendix \ref{sec: th2 proof}. It is similar to that of Theorem \ref{th: co-blockmodel}, but requires substantially more notation due to the additional parameters. Essentially, the proof approximates the model of \eqref{eq: model} by a blockmodel, and then applies Theorem \ref{th: co-blockmodel} to bound the difference between $R_A$ and $R_\omega$.

\section{Proof of Theorem \ref{th: co-blockmodel}} \label{sec: proofs}

We present a sketch of the proof for Theorem \ref{th: co-blockmodel}, which defines the most important quantities. We then present helper lemmas and give the proof of the theorem.

\subsection{Proof Sketch} \label{sec: proof sketch}

Let $W \in [0,1]^{m \times n}$ denote the expectation of $A$, conditioned on the latent variables $x_1,\ldots,x_m$ and $y_1,\ldots,y_n$:
\[W_{ij} = \omega(x_i,y_j), \qquad i\in [m], j \in [m],\]
and let $\Phi_W(S,T)$ denote the conditional expectation of $\Phi_A(S,T)$:
\[ [\Phi_W(S,T)]_{st} = \frac{1}{nm}\sum_{i=1}^m \sum_{j=1}^n W_{ij} 1\{S_i=s, T_j=t\}.\]
Given co-cluster labels $S \in [K]^m$ and $T \in [K]^n$, let $1_{S=s} \in \{0,1\}^m$ and $1_{T=t} \in \{0,1\}^n$ denote the indicator variables
\[ 1_{S=s}(i) = \begin{cases} 1 & \text{if } S_i=s \\ 0 & \text{otherwise} \end{cases} \qquad \text{and} \qquad 1_{T=t}(j) = \begin{cases} 1 & \text{if } T_j = t \\ 0 & \text{otherwise}.\end{cases} \]
Let $g_{T=t}\in [0,1]^m$ denote the vector $n^{-1}W 1_{T=t}$, or 
\[g_{T=t}(i) = \frac{1}{n} \sum_{j=1}^n W_{ij} 1\{T_j=t\}.\]
It can be seen that the entries of $\Phi_W(S,T)$ can be written as
\begin{align}
[\Phi_W(S,T)]_{st} &= \frac{1}{m} \ip{1_{S=s}, g_{T=t}}, \label{eq: sketch1}
\end{align}
where $\ip{\cdot, \cdot}$ denotes inner product. Similarly, the entries of $\Phi_\omega(S,\tau)$ can be written as
\begin{align} \label{eq: sketch2}
 [\Phi_W(S,\tau)]_{st} = \frac{1}{m} \ip{1_{S=s}, g_{\tau=t}},
\end{align}
where $g_{\tau=t} \in [0,1]^m$ is the vector 
\[ g_{\tau=t}(i) = \int_{\Yscr} \omega(x_i,y) 1\{\tau(y) = t\}\, dy, \qquad i \in [m].\]

The proof of Theorem \ref{th: co-blockmodel} will require three main steps:
\begin{itemize}
\item[S1:] In Lemma \ref{le: mcdiarmid}, a concentration inequality will be used to show that $\Phi_A(S,T) \approx \Phi_W(S,T)$ uniformly over all possible values of $(S,T)$. 
\item[S2:] For each $T \in [K]^n$, we will show there exists $\tau:\Yscr \mapsto [K]$ such that $g_{T=t} \approx g_{\tau=t}$ for $t \in [K]$. By \eqref{eq: sketch1} and \eqref{eq: sketch2}, this will imply that $\Phi_W(S,T) \approx \Phi_\omega(S,\tau)$ uniformly for all $S \in [K]^m$. The mapping $\tau$ will also  satisfy $\pi_T \approx \pi_\tau$ as well, so that $T$ and $\tau$ have similar class frequencies.
\item[S3:] Analogous to S2, we will show that for each $S \in [K]^m$, there exists $\sigma: \Xscr \mapsto [K]$ such that $\Phi_\omega(S,\tau) \approx \Phi_\omega(\sigma_S,\tau)$ uniformly over $\tau$, and also that $\pi_S \approx\pi_{\sigma_S}$.
\end{itemize}
Steps S1 and S2 correspond to \eqref{eq: phi_tau} in Theorem \ref{th: co-blockmodel}, while step S3 corresponds to \eqref{eq: phi_sigma}.

Let $G_T$ and $G_\tau$ denote the stacked vectors in $\mathbb{R}^{mK+K}$ given by 
\begin{align*}
G_T = \left(\frac{g_{T=1}}{\sqrt{m}},\ldots,\frac{g_{T=K}}{\sqrt{m}},\pi_T\right) \qquad \text{and} \qquad  G_\tau = \left(\frac{g_{\tau=1}}{\sqrt{m}}, \ldots, \frac{g_{\tau=K}}{\sqrt{m}}, \pi_\tau\right), 
\end{align*}
and let $\Gscr_n$ and $\Gscr$ denote the set of all possible values for $G_T$ and $G_\tau$:
\begin{align*}
\Gscr_n = \{G_T: T \in [K]^n\} \qquad \text{and} \qquad \Gscr = \{G_\tau: \tau \in \Yscr \mapsto [K]\}.
\end{align*}
Step S2 is established by showing that the sets $\Gscr_n$ and $\Gscr$ converge in Hausdorff distance. This will require the following facts. The Hausdorff distance (in Euclidean norm) between two sets $\Bscr_1$ and $\Bscr_2$ is defined as
\[ d_{\textrm{Haus}}(\Bscr_1,\Bscr_2) = \max\left\{ \sup_{B_1 \in \Bscr_1} \inf_{B_2 \in \Bscr_2} \|B_1 - B_2\|, \sup_{B_2 \in \Bscr_2} \inf_{B_1 \in \Bscr_1} \| B_1 - B_2\|\right\}.\]
Given a Hilbert space $\mathbb{H}$ and a set $\Bscr \subset \mathbb{H}$, let $\Gamma_\Bscr:\mathbb{H} \mapsto \mathbb{R}$ denote the support function of $\Bscr$, defined as
\[ \Gamma_\Bscr(H) = \sup_{B\in \Bscr} \ip{H,B}.\]
It is known that the convex hull $\operatorname{conv}(\Bscr)$ equals the intersection of its supporting hyperplanes:
\[ \operatorname{conv}(\Bscr) = \left\{ x \in \mathbb{H}: \ip{x, H} \leq \Gamma_\Bscr(H)\ \textrm{for all } H \in \mathbb{H} \right\},\]
and that the Hausdorff distance between $\operatorname{conv}(\Bscr_1)$ and $\operatorname{conv}(\Bscr_2)$ is given by  \cite[Thm 1.8.11]{schneider2013convex}, \cite[Cor 7.59]{aliprantisborder}
\begin{equation}\label{eq: S2 eq1}
d_\textrm{Haus}(\operatorname{conv}(\Bscr_1), \operatorname{conv}(\Bscr_2)) = \sup_{H:\|H\|=1} | \Gamma_{\Bscr_1}(H) - \Gamma_{\Bscr_2}(H)|.
\end{equation}
To establish S2, Lemma \ref{le: supporting hyperplanes} will show that 
\begin{align} \label{eq: S2 eq2}
\sup_{H:\|H\|=1} |\Gamma_{\Gscr_n}(H) - \Gamma_{\Gscr}(H)| = O_P(K(\log n)n^{-1/2}),
\end{align}
and Lemma \ref{le: convex} will show that
\begin{equation} \label{eq: S2 eq3}
d_{\textrm{Haus}}(\operatorname{conv}(\Gscr), \Gscr) = 0.
\end{equation}
By \eqref{eq: S2 eq1} and \eqref{eq: S2 eq2}, $\operatorname{conv}(\Gscr_n)$ and $\operatorname{conv}(\Gscr)$ converge in Hausdorff distance, which by \eqref{eq: S2 eq3} implies that $\operatorname{conv}(\Gscr_n)$ and $\Gscr$ converge in Hausdorff distance. This implies that for each $G_T \in \Gscr_n$, there exists $G_{\tau} \in \Gscr$ such that $\max_T \|G_T - G_\tau\| \rightarrow 0$. This will establish S2, since $G_T \approx G_\tau$ implies by \eqref{eq: sketch1} and \eqref{eq: sketch2} that $\Phi_W(S,T) \approx \Phi_\omega(S,\tau)$ uniformly over $S \in [K]^m$, and it also implies that $\pi_T \approx \pi_{\tau}$ as well. 

The proof of S3 will be similar to S2. It can be seen that $\Phi_\omega(S,\tau)$ and $\Phi_\omega(\sigma,\tau)$ can be written as
\begin{equation} \label{eq: sketch3}
[\Phi_\omega(S,\tau)]_{st} = \ip{f_{S=s}, 1_{\tau=t}} \qquad \text{and} \qquad [\Phi_\omega(\sigma,\tau)]_{st} = \ip{f_{\sigma=s}, 1_{\tau=t}},
\end{equation}
where the functions $f_{S=s}, 1_{\tau=t}$, and $f_{\sigma=s}$ are given by
\begin{align*}
1_{\tau=t}(y) &= \begin{cases} 1 & \text{if } \tau(y) = t \\ 0 & \text{otherwise.}\end{cases} \\
f_{S=s}(y) &= \frac{1}{m}\sum_{i=1}^m \omega(x_i,y) 1\{S_i=s\} \\
f_{\sigma=s}(y) &= \int_{\Xscr} \omega(x,y) 1\{\sigma(x)=s\}\, dx.
\end{align*}
Analogous to S2, we will define sets $F_S$ and $F_\sigma$ given by
\[F_S = (f_{S=1},\ldots,f_{S=K},\pi_S) \qquad \text{and} \qquad F_\sigma  = (f_{\sigma=1},\ldots, f_{\sigma=K},\pi_\sigma),\]
whose possible values are given by
\[\Fscr_n = \{F_S: S \in [K]^m\} \qquad \text{and} \qquad \Fscr = \{F_\sigma:\sigma \in \Xscr \mapsto [K]\}. \]
Lemma \ref{le: supporting hyperplanes} will show that the support functions $\Gamma_{\Fscr_n}$ and $\Gamma_{\Fscr}$ converge, and Lemma \ref{le: convex} will show that $d_{\textrm{Haus}}(\operatorname{conv}(\Fscr), \Fscr) = 0$. Using \eqref{eq: sketch3}, this will establish S3 by arguments that are analogous to those used to prove S2.

\subsection{Intermediate Results for Proof of Theorem \ref{th: co-blockmodel}} 

Lemmas \ref{le: mcdiarmid} - \ref{le: convex} will be used to prove Theorem \ref{th: co-blockmodel}, and are proven in Section \ref{sec: lemma proofs}. 

Lemma \ref{le: mcdiarmid} states that $\Phi_A \approx \Phi_W$ for all $(S,T)$. 

\begin{lemma}\label{le: mcdiarmid}
Under the conditions of Theorem \ref{th: co-blockmodel}, 
\begin{equation}\label{eq: mcdiarmid phi}
\max_{S,T} \|\Phi_A(S,T) - \Phi_W(S,T)\|^2 = O_P\left((\log K) n^{-1}\right).
\end{equation}
\end{lemma}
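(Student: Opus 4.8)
The plan is to control the maximum over the (exponentially large) family of clusterings by a union bound, after proving a sharp high‑probability bound for each individual $(S,T)$ using McDiarmid's bounded‑differences inequality. Throughout I condition on the latent variables $x_1,\dots,x_m$ and $y_1,\dots,y_n$, so that the entries $A_{ij}$ are independent with $\E[A_{ij}] = W_{ij}$; since the target rate $O_P((\log K)n^{-1})$ does not involve the latent variables, a conditional statement is enough.

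Fix $(S,T)$ and view $\|\Phi_A(S,T)-\Phi_W(S,T)\|$ as a function of the $mn$ independent entries of $A$. Changing a single entry $A_{ij}$ moves $\Phi_A(S,T)$ only in coordinate $(S_i,T_j)$ and by at most $1/(mn)$, hence moves $\|\Phi_A(S,T)-\Phi_W(S,T)\|$ by at most $1/(mn)$; thus the bounded‑differences constants are $c_{ij}=1/(mn)$ with $\sum_{ij}c_{ij}^2=1/(mn)$, and McDiarmid gives
\[
\mathbb{P}\!\left(\|\Phi_A(S,T)-\Phi_W(S,T)\| \ge \E\|\Phi_A(S,T)-\Phi_W(S,T)\| + u\right) \le \exp(-2u^2 mn).
\]
The mean is controlled crudely via Jensen and a variance computation: summing over blocks, $\E\|\Phi_A(S,T)-\Phi_W(S,T)\|^2 = \sum_{s,t}\Var\left([\Phi_A(S,T)]_{st}\right) = \frac{1}{(mn)^2}\sum_{i,j}W_{ij}(1-W_{ij}) \le \frac{1}{4mn}$, so $\E\|\Phi_A(S,T)-\Phi_W(S,T)\| \le (4mn)^{-1/2}$, which is $O(n^{-1})$ and hence negligible at the target scale.

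Now union‑bound over the $K^{m+n}$ choices of $(S,T)$:
\[
\mathbb{P}\!\left(\max_{S,T}\|\Phi_A(S,T)-\Phi_W(S,T)\| \ge (4mn)^{-1/2} + u\right) \le K^{m+n}\exp(-2u^2 mn).
\]
Taking $u^2 = a\,(m+n)(\log K)/(mn)$ with a large enough constant $a$ makes the right‑hand side $\exp\left((m+n)(\log K)(1-2a)\right)\to 0$, while $(4mn)^{-1/2}+u = O\left(\sqrt{(m+n)(\log K)/(mn)}\right) = O\left(\sqrt{(\log K)/n}\right)$ because $m/n$ is bounded away from $0$ and $\infty$. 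Squaring yields $\max_{S,T}\|\Phi_A(S,T)-\Phi_W(S,T)\|^2 = O_P((\log K)n^{-1})$. The hypothesis $K \le n^{1/2}$ from Theorem \ref{th: co-blockmodel} is not actually needed for this step, but is available if one prefers instead to bound each $\|\Phi_A(S,T)-\Phi_W(S,T)\|$ through an $\epsilon$‑net over the unit Frobenius ball in $\mathbb{R}^{K\times K}$ together with Hoeffding's inequality for linear functionals of $A-W$, which gives the same conclusion after paying an extra factor $\exp(O(K^2))$ in the union bound.

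The only genuine obstacle is the size of the family over which the maximum runs: there are $K^{m+n}$ clusterings, so the union bound costs $\Theta((m+n)\log K) = \Theta(n\log K)$ in the exponent. This is affordable precisely because McDiarmid produces a sub‑Gaussian tail whose exponent $2u^2 mn$ carries an effective sample size $mn \asymp n^2$; balancing $u^2 mn$ against $n\log K$ forces $u^2 \asymp (\log K)/n$, which is exactly the claimed rate. The remaining ingredients — the bounded‑differences constant and the second‑moment bound on the mean — are routine.
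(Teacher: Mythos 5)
Your proof is correct but takes a genuinely different route. The paper fixes $(S,T)$, applies Hoeffding's inequality to each block entry $\Delta_{st}$ of $\Phi_A(S,T)-\Phi_W(S,T)$ separately, exploits the conditional independence of the $K^2$ blocks given $W$ to bound the point probability $\mathbb{P}(\Delta=\delta\,|\,W)$, and then union-bounds over the finite support of the discrete matrix $\Delta$ --- which has at most $(nm)^{K^2}$ values --- together with the $K^{m+n}$ choices of $(S,T)$; the $(nm)^{K^2}$ factor is what forces the assumption $K\le n^{1/2}$ to be invoked. You instead apply McDiarmid's bounded-differences inequality directly to the scalar map $A\mapsto\|\Phi_A(S,T)-\Phi_W(S,T)\|$ (bounded-difference constant $1/(mn)$ per entry via the reverse triangle inequality, so $\sum_{ij}c_{ij}^2 = 1/(mn)$), control its mean by the crude second-moment bound $\E\|\Phi_A-\Phi_W\|^2\le 1/(4mn)$, and union-bound only over the $K^{m+n}$ pairs $(S,T)$. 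This norm-level concentration argument sidesteps the support-enumeration device entirely, is somewhat cleaner, and --- as you correctly observe --- does not require $K\le n^{1/2}$ for this lemma. The final rates agree, and the remaining details (the Lipschitz constant, the variance computation, and the choice $u^2\asymp(m+n)(\log K)/(mn)$) all check out.
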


Lemma \ref{le: supporting hyperplanes} states that the support functions of $\Gscr$ and $\Gscr_n$ and of $\Fscr$ and $\Fscr_n$ converge. 

\begin{lemma}\label{le: supporting hyperplanes}
Under the conditions of Theorem \ref{th: co-blockmodel}, 
\begin{align}
\sup_{\|H\|=1} \left| \Gamma_{\Gscr_n}(H) - \Gamma_{\Gscr}(H)\right| & \leq O_P(K (\log n) n^{-1/2}) \label{eq: hyperplane G}\\
\sup_{\|H\|=1} \left| \Gamma_{\Fscr_m}(H) - \Gamma_{\Fscr}(H) \right| & \leq O_P(K (\log m) m^{-1/2}), \label{eq: hyperplane F}
\end{align}
which implies
\begin{align*}
d_{\textrm{Haus}}(\operatorname{conv}(\Gscr_n), \operatorname{conv}(\Gscr)) &\leq O_P(K(\log n)n^{-1/2}) \\
d_{\textrm{Haus}}(\operatorname{conv}(\Fscr_m), \operatorname{conv}(\Fscr)) &\leq O_P(K(\log m)m^{-1/2}).
\end{align*}
\end{lemma}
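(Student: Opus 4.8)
The starting point is to write the two support functions explicitly. Decomposing $H=(h_1,\dots,h_K,\eta)$ with $h_t\in\mathbb{R}^m$ and $\eta\in\mathbb{R}^K$, and unwinding the definitions of $G_T$ and $g_{T=t}$, one gets
\[
\ip{H,G_T}=\frac1n\sum_{j=1}^n\Bigl(\tfrac1{\sqrt m}\ip{h_{T_j},W_{\cdot j}}+\eta_{T_j}\Bigr),
\]
where $W_{\cdot j}:=(\omega(x_1,y_j),\dots,\omega(x_m,y_j))$ is the $j$th column of $W$, so that maximizing each $T_j$ independently gives $\Gamma_{\Gscr_n}(H)=\frac1n\sum_{j=1}^n\psi_H(W_{\cdot j})$, where $\psi_H(w):=\max_{t\in[K]}\bigl(\tfrac1{\sqrt m}\ip{h_t,w}+\eta_t\bigr)$. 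Running the same computation with a (measurably chosen) $\tau$ in place of $T$ gives $\Gamma_{\Gscr}(H)=\int_0^1\psi_H\bigl(\omega(x_1,y),\dots,\omega(x_m,y)\bigr)\,dy$, which is exactly the mean of each summand in $\Gamma_{\Gscr_n}(H)$, conditionally on $x_1,\dots,x_m$. Hence $\Gamma_{\Gscr_n}(H)-\Gamma_{\Gscr}(H)$ is a centered empirical process over the i.i.d.\ sample $y_1,\dots,y_n$, indexed by $\{H:\|H\|=1\}$, and \eqref{eq: hyperplane G} is a uniform (in $H$) bound on it. The analogous reduction for $\Fscr_m$ versus $\Fscr$, now in the Hilbert space $L^2[0,1]^K\oplus\mathbb{R}^K$, gives $\Gamma_{\Fscr_m}(H)=\frac1m\sum_{i=1}^m\chi_H(x_i)$ and $\Gamma_{\Fscr}(H)=\int_0^1\chi_H(x)\,dx$ with $\chi_H(x):=\max_{t\in[K]}\bigl(\int_0^1 h_t(y)\omega(x,y)\,dy+\eta_t\bigr)$.

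The plan is then to bound $g_n:=\sup_{\|H\|=1}\bigl|\Gamma_{\Gscr_n}(H)-\Gamma_{\Gscr}(H)\bigr|$ in two steps. First, concentration: over $[0,1]^m$ the function $\psi_H$ oscillates by at most $\max_t\tfrac1{\sqrt m}\|h_t\|\cdot\sqrt m\le 1$ (the $\eta_t$ terms cancel in a difference of values), so moving a single $y_j$ changes $\Gamma_{\Gscr_n}(H)$ uniformly in $H$, hence $g_n$, by at most $1/n$; applying McDiarmid's inequality conditionally on the $x_i$ gives $g_n=\E g_n+O_P(n^{-1/2})$. Second, the expected supremum: by symmetrization $\E g_n\le 2\,\E\sup_{\|H\|=1}\bigl|\tfrac1n\sum_j\epsilon_j\psi_H(W_{\cdot j})\bigr|$ with i.i.d.\ Rademacher $\epsilon_j$; relaxing $\sum_t\|h_t\|^2+\|\eta\|^2=1$ to $\|h_t\|\le 1$ and $|\eta_t|\le 1$ for each $t$ (which only enlarges the supremum) exhibits $\psi_H$ as a pointwise maximum of $K$ affine function classes, so the empirical Rademacher complexity is at most the sum of the $K$ single-class complexities (using $f\vee g=\tfrac12(f+g)+\tfrac12|f-g|$ and the contraction principle); and for one such class $\tfrac1n\E_\epsilon\sup_{\|h\|\le1}\sum_j\epsilon_j\tfrac1{\sqrt m}\ip{h,W_{\cdot j}}=\tfrac1{n\sqrt m}\E_\epsilon\|W\epsilon\|\le\tfrac1{n\sqrt m}\|W\|_F\le n^{-1/2}$, using only the deterministic bound $\|W\|_F\le\sqrt{mn}$, with a matching $n^{-1/2}$ term for the $\eta$ part. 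Thus $\E g_n=O(Kn^{-1/2})$ and $g_n=O_P(Kn^{-1/2})$, which in particular yields \eqref{eq: hyperplane G} (the $\log n$ factor in the statement is slack under this route).

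The bound \eqref{eq: hyperplane F} follows by the same two steps with the roles of rows and columns interchanged: $\chi_H$ oscillates by at most $\max_t\|h_t\|_{L^2}\le 1$ over $x\in[0,1]$ (since $\|\omega(x,\cdot)-\omega(x',\cdot)\|_{L^2}\le 1$), and the single-class Rademacher complexity is $\tfrac1m\E_\epsilon\bigl\|\sum_i\epsilon_i\,\omega(x_i,\cdot)\bigr\|_{L^2}\le\tfrac1m\bigl(\sum_i\|\omega(x_i,\cdot)\|_{L^2}^2\bigr)^{1/2}\le m^{-1/2}$, the bound $\|\omega(x_i,\cdot)\|_{L^2}\le1$ playing the role of the Frobenius bound; this gives $O_P(Km^{-1/2})$. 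Finally, both displayed Hausdorff-distance claims are immediate from \eqref{eq: S2 eq1} together with $\Gamma_{\Bscr}=\Gamma_{\operatorname{conv}(\Bscr)}$, since their right-hand sides are precisely $\sup_{\|H\|=1}|\Gamma_{\Gscr_n}(H)-\Gamma_{\Gscr}(H)|$ and $\sup_{\|H\|=1}|\Gamma_{\Fscr_m}(H)-\Gamma_{\Fscr}(H)|$.

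\textbf{Main obstacle.} The delicate point is that the index set $\{H:\|H\|=1\}$ lives in $\mathbb{R}^{mK+K}$ with $m$ of the same order as $n$, so a naive net-plus-union-bound over directions contributes a non-vanishing term of order $\sqrt{mK\log(\cdot)/n}$ and is useless. What makes the argument work is that $\Gamma_{\Gscr_n}(H)-\Gamma_{\Gscr}(H)$ depends on $H$ only through the $n$ scalars $\psi_H(W_{\cdot j})$, and the effective complexity of this family is controlled by $\|W\|_F^2\le mn$ --- a deterministic inequality needing no smoothness or low-rank assumption on $\omega$ --- rather than by the ambient dimension. Capturing this through symmetrization and the max-of-$K$-affine-classes contraction is the crux; the remaining points (measurable selection of the maximizing $\tau$; closedness and convexity in the $\Fscr$ case so that \eqref{eq: S2 eq1} applies in the infinite-dimensional Hilbert space) are routine.
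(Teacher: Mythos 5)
Your proposal is correct and takes essentially the same route as the paper: you re-derive inline the symmetrization plus max-of-$K$-affine contraction argument that the paper isolates as Lemma~\ref{le: Biau}, and then apply McDiarmid's inequality conditionally on the latent $x$'s just as the paper does. The only cosmetic differences are that you absorb the $\pi_H$ coordinate via a separate constant Rademacher term rather than the paper's device of appending a constant entry $1$ to the data vector, and you correctly note that the $\log n$ factor in the stated bound is slack under this argument.
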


Lemma \ref{le: convex} states that the sets $\Fscr$ and $\Gscr$ are essentially convex. 

\begin{lemma}\label{le: convex}
It holds that
\begin{align}
d_\textrm{Haus}(\operatorname{conv}(\Gscr), \Gscr) &= 0 \label{eq: convex eq1} \\
d_\textrm{Haus}(\operatorname{conv}(\Fscr), \Fscr) &= 0. \label{eq: convex eq2}
\end{align}
\end{lemma}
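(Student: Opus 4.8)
The plan is to derive both \eqref{eq: convex eq1} and \eqref{eq: convex eq2} from Lyapunov-type convexity of the range of a non-atomic vector measure, in the partition form of the Dvoretzky--Wald--Wolfowitz theorem: given finitely many finite non-atomic measures $\mu_0,\dots,\mu_r$ on a measurable space, the set of arrays $\bigl(\mu_i(E_t)\bigr)_{0\le i\le r,\,1\le t\le K}$ induced by measurable partitions $(E_1,\dots,E_K)$ is convex and compact, and it coincides with the analogous set obtained by replacing $(E_1,\dots,E_K)$ with a measurable ``fuzzy partition'' $p$ valued in the simplex $\Delta_K=\{c\in[0,1]^K:\sum_t c_t=1\}$ and each $\mu_i(E_t)$ with $\int p_t\,d\mu_i$. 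For $\Gscr$ this applies directly, since $\Gscr\subset\mathbb{R}^{mK+K}$ is finite-dimensional: for the fixed $x_1,\dots,x_m$, let $\mu_0(E)=|E|$ and $\mu_i(E)=m^{-1/2}\int_E\omega(x_i,y)\,dy$ for $i\in[m]$, measures on $\Yscr=[0,1]$ that are absolutely continuous with respect to Lebesgue measure and hence non-atomic; identifying a partition $\tau:\Yscr\mapsto[K]$ with the ordered partition $(E_t)_{t\in[K]}$, $E_t=\{\tau=t\}$, the vector $G_\tau$ is exactly the array $\bigl(\mu_i(E_t)\bigr)_{i,t}$ after reindexing coordinates (cf.\ \eqref{eq: sketch2}). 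By Dvoretzky--Wald--Wolfowitz $\Gscr$ is convex, so $\operatorname{conv}(\Gscr)=\Gscr$ and \eqref{eq: convex eq1} follows.

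For $\Fscr$ the ambient Hilbert space $\mathbb{H}=(L^2[0,1])^K\times\mathbb{R}^K$ is infinite-dimensional, and $\Fscr$ itself need not be convex, so the theorem cannot be invoked verbatim. The key observation is that $d_{\textrm{Haus}}(\operatorname{conv}(\Fscr),\Fscr)=0$ holds as soon as $\operatorname{conv}(\Fscr)\subseteq\overline{\Fscr}$, because $\Fscr\subseteq\operatorname{conv}(\Fscr)$ settles one side of the Hausdorff distance and ``every point of $\operatorname{conv}(\Fscr)$ is a limit of points of $\Fscr$'' settles the other. So take $a=\sum_{\ell=1}^L\lambda_\ell F_{\sigma_\ell}\in\operatorname{conv}(\Fscr)$ with $\lambda_\ell\ge0,\ \sum_\ell\lambda_\ell=1$. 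Extending the formulas in \eqref{eq: sketch3} from indicators to simplex-valued $p:\Xscr\mapsto\Delta_K$ via $f_{p=s}(y)=\int_\Xscr\omega(x,y)p_s(x)\,dx$ and $\pi_p(s)=\int_\Xscr p_s(x)\,dx$, and setting $F_p=\bigl((f_{p=s})_s,(\pi_p(s))_s\bigr)$, linearity in the indicators $1_{\sigma_\ell=s}$ gives $a=F_p$ for the fuzzy partition $p(x)=\sum_\ell\lambda_\ell e_{\sigma_\ell(x)}$, which takes finitely many values in $\Delta_K$. It therefore suffices to show that $F_p\in\overline{\Fscr}$ for every such finitely-valued $p$.

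To that end, approximate $\omega$ in $L^2([0,1]^2)$ by a function $\omega_N$ that is constant in $y$ on each order-$N$ dyadic interval (e.g.\ the conditional expectation of $\omega$ given $x$ and the dyadic block of $y$), so $\|\omega-\omega_N\|_{L^2([0,1]^2)}\to0$. Writing $\omega_N(x,y)=\sum_{\ell=1}^{2^N}c_\ell^N(x)\,1\{y\in I_\ell\}$ with $c_\ell^N$ bounded and measurable, the functions $f_{q=s}^{(N)}(y):=\int_\Xscr\omega_N(x,y)q_s(x)\,dx$ (for $q$ any partition or fuzzy partition of $\Xscr$) are step functions in $y$ determined by the finitely many numbers $\{\int_\Xscr c_\ell^N q_s\,dx\}_\ell$ together with $\pi_q(s)$. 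Applying Dvoretzky--Wald--Wolfowitz on $\Xscr$ to the non-atomic measures $E\mapsto\int_E c_\ell^N\,dx$ and $E\mapsto|E|$, and to the finitely-valued $p$, produces a genuine partition $\sigma^{(N)}:\Xscr\mapsto[K]$ with $f_{\sigma^{(N)}=s}^{(N)}=f_{p=s}^{(N)}$ for all $s$ and $\pi_{\sigma^{(N)}}=\pi_p$. A weighted Cauchy--Schwarz bound (using $q_s\le1$) gives $\|f_{q=s}-f_{q=s}^{(N)}\|_{L^2[0,1]}\le\|\omega-\omega_N\|_{L^2([0,1]^2)}$ uniformly over $q$ and $s$; combining, $\|f_{\sigma^{(N)}=s}-f_{p=s}\|_{L^2}\le2\|\omega-\omega_N\|_{L^2([0,1]^2)}\to0$ for each $s$, and since $\pi_{\sigma^{(N)}}=\pi_p$ we get $F_{\sigma^{(N)}}\to F_p=a$ in $\mathbb{H}$. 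Hence $a\in\overline{\Fscr}$, which proves $\operatorname{conv}(\Fscr)\subseteq\overline{\Fscr}$ and thus \eqref{eq: convex eq2}.

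The main obstacle is the infinite-dimensionality of $\mathbb{H}$ in the $\Fscr$ case: the range of a non-atomic $L^2$-valued vector measure need not be convex or closed, so Lyapunov's theorem is not directly applicable. The two ideas that get around this are (i) noticing that vanishing Hausdorff distance only requires $\operatorname{conv}(\Fscr)\subseteq\overline{\Fscr}$ rather than convexity of $\Fscr$ itself, and (ii) the truncation $\omega_N$, which collapses the $y$-dependence to finitely many coordinates so that Dvoretzky--Wald--Wolfowitz applies, with the truncation error controlled uniformly over partitions by a single Cauchy--Schwarz step. (Alternatively one could cite a relaxed Lyapunov / bang--bang theorem for vector measures with a Bochner-integrable density, giving convexity of $\overline{\Fscr}$ at once; the truncation argument is preferred here for being self-contained.) The remaining steps---matching $G_\tau$ and $F_\sigma$ to vector-measure ranges through \eqref{eq: sketch2}--\eqref{eq: sketch3}, and the Cauchy--Schwarz estimate---are routine.
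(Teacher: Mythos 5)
Your proof is correct, and it departs from the paper's in a meaningful way. For $\Gscr$, the paper applies Carath\'{e}odory's theorem (Theorem \ref{th: caratheodory}) to write $G^*\in\operatorname{conv}(\Gscr)$ as a finite convex combination, then builds a genuine partition $\tau$ by hand: it takes an $\epsilon$-cover of $\{g_y\}\subset[0,1]^m$ and, inside each Voronoi cell, arbitrarily carves out $K$ sub-regions whose Lebesgue measures match the fuzzy weights. This only uses the elementary fact that a non-atomic measure can realize any prescribed sub-measure, and yields $d_{\textrm{Haus}}(\operatorname{conv}(\Gscr),\Gscr)=0$ via a limit. You instead invoke the Dvoretzky--Wald--Wolfowitz theorem once, with the $m+1$ absolutely continuous measures $\mu_0(E)=|E|$ and $\mu_i(E)=m^{-1/2}\int_E\omega(x_i,y)\,dy$, and conclude that $\Gscr$ is literally convex and compact. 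That is a shorter argument and a slightly stronger conclusion, at the cost of citing a heavier off-the-shelf theorem where the paper is deliberately self-contained. For $\Fscr$, both you and the paper recognize that infinite dimensionality obstructs the finite-dimensional convexity step and that it suffices to show $\operatorname{conv}(\Fscr)\subseteq\overline{\Fscr}$, and both proceed by truncating $\omega$ in $L^2$ and controlling the error uniformly over partitions by a Cauchy--Schwarz bound. The truncations differ: the paper uses the Hilbert--Schmidt SVD of $\omega$, keeping $Q$ singular directions and clipping the left singular functions at height $D$, then re-runs the Carath\'{e}odory/$\epsilon$-cover argument on the resulting finite-dimensional, bounded $\hat{\Fscr}$; you instead bin the $y$-variable into $2^N$ dyadic intervals, which collapses each $f_{q=s}$ to $2^N$ scalars plus $\pi_q$, and then apply DWW on $\Xscr$ to those finitely many non-atomic measures to produce a genuine $\sigma^{(N)}$ that exactly matches the fuzzy target at the truncated level. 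Your binning avoids spectral theory entirely and keeps the argument in the same DWW framework as the $\Gscr$ case; the paper's SVD route is more tightly coupled to its earlier Hilbert--Schmidt setup. Both error estimates are of the same type and close the argument the same way.
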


\subsection{Proof of Theorem \ref{th: co-blockmodel}}

\begin{proof}[Proof of Theorem \ref{th: co-blockmodel}]

We bound $\|\Phi_W(S,T) - \Phi_\omega(S,\tau)\|^2$ uniformly over $S$, as follows:
\begin{align}
\nonumber \|\Phi_W(S,T) - \Phi_\omega(S,\tau)\|^2 &= \sum_{s=1}^K \sum_{t=1}^K \left( [\Phi_W(S,T)]_{st} - [\Phi_\omega(S,\tau)]_{st}\right)^2 \\
\nonumber & = \sum_{s=1}^K \sum_{t=1}^K \frac{1}{m^2} \ip{1_{S=s}, g_{T=t} - g_{\tau=t}}^2 \\
\nonumber & \leq \sum_{s=1}^K \sum_{t=1}^K \frac{1}{m^2}\|1_{S=s}\|^2 \|g_{T=t} - g_{\tau = t}\|^2 \\
\nonumber & = \left( \sum_{s=1}^K \frac{1}{m} \|1_{S=s}\|^2\right) \left( \sum_{t=1}^K \frac{1}{m}\|g_{T=t} - g_{\tau = t}\|^2\right) \\
 & \leq \left( \sum_{t=1}^K \frac{1}{m}\|g_{T=t} - g_{\tau = t}\|^2\right) \label{eq: step phi_tau}
\end{align}
where \eqref{eq: step phi_tau} holds because $m^{-1}\sum_{s=1}^K \|1_{S=s}\|^2 = 1$. 

By Lemma \ref{le: supporting hyperplanes} and Lemma \ref{le: convex}, it holds that $d_{\textrm{Haus}}(\operatorname{conv}(\Gscr_n),\Gscr) = O_P(K(\log n)n^{-1/2})$. Given $T$, let $\tau \equiv \tau_{\,T}$ denote the minimizer of $\|G_T - G_\tau\| = \ip{G_T - G_\tau, G_T - G_\tau}$. It follows that 
\begin{align} 
\nonumber \max_{T} \|G_T - G_\tau\|^2 & =\max_T  \sum_{t=1}^K \frac{1}{m}\| g_{T=t} - g_{\tau = t} \|^2 + \|\pi_T - \pi_{\tau}\|^2 \\
& = O_P\left(\frac{K^2 \log n}{n}\right). \label{eq: step hausdorff G}
\end{align}

Combining \eqref{eq: mcdiarmid phi}, \eqref{eq: step hausdorff G}, and \eqref{eq: step phi_tau}  yields
\[ \max_{S,T} \|\Phi_A(S,T) - \Phi_\omega(S,\tau_{\,T})\|^2 + \|\pi_T - \pi_{\tau_{\,T}}\|^2 = O_P\left(\frac{K^2 \log n}{n}\right), \]
establishing \eqref{eq: phi_tau}.

The proof of \eqref{eq: phi_sigma} proceeds in similar fashion. The quantity $\|\Phi_\omega(S,\tau) - \Phi_\omega(\sigma,\tau)\|^2$ may be bounded uniformly over $\tau$:
\begin{align}
\nonumber \|\Phi_\omega(S,\tau) - \Phi_\omega(\sigma,\tau)\|^2 &= \sum_{s=1}^K \sum_{t=1}^K \left( [\Phi_\omega(S,\tau)]_{st} - [\Phi_\omega(\sigma,\tau)]_{st}\right)^2 \\  
\nonumber & = \sum_{s=1}^K \sum_{t=1}^K  \ip{f_{S=s} - f_{\sigma=s}, 1_{\tau=t}}^2 \\
& \leq \left( \sum_{s=1}^K \|f_{S=s} - f_{\sigma=s}\|^2\right), \label{eq: step phi_sigma}
\end{align}
where all steps parallel the derivation of \eqref{eq: step phi_tau}. It follows from Lemma \ref{le: supporting hyperplanes} and \ref{le: convex} that $d_{\textrm{Haus}}(\operatorname{conv}(\Fscr_m),\Fscr) = O_P(K(\log m)m^{-1/2})$. Given $S$, let $\sigma \equiv \sigma_S$ denote the minimizer of $\|F_S - F_\sigma\|$, so that
\begin{equation} \label{eq: step hausdorff F}
\max_{S}\quad \sum_{t=1}^K \| f_{S=s} - f_{\sigma = s} \|^2 + \|\pi_S - \pi_{\sigma}\|^2 = O_P\left(\frac{K^2 \log m}{m}\right). 
\end{equation}
Combining \eqref{eq: step hausdorff F} and \eqref{eq: step phi_sigma} yields
\[ \max_{S,\tau} \|\Phi_\omega(S,\tau) - \Phi_\omega(\sigma_S, \tau)\|^2 + \|\pi_S -\pi_{\sigma_S}\|^2 = O_P\left(\frac{K^2 \log m}{m}\right),\]
establishing \eqref{eq: phi_sigma} and completing the proof.
\end{proof}

\subsection{Proof of Lemmas \ref{le: mcdiarmid} -- \ref{le: convex}} \label{sec: lemma proofs}

The proof of Lemma \ref{le: supporting hyperplanes} will rely on Lemma \ref{le: Biau}, which is a very slight modification of Lemma 4.3 in \cite{biau2008performance}. Lemma \ref{le: Biau} is proven in the Appendix.

\begin{lemma} \label{le: Biau}
Let $\mathbb{H}$ denote a Hilbert space, with inner product $\ip{\cdot,\cdot}$ and induced norm $\|\cdot\|$. Let $g:\Yscr \mapsto \mathbb{H}$, and let $y_1,\ldots,y_n \in \Yscr$ be i.i.d. Let $L_n: \mathbb{H}^K \mapsto \mathbb{R}$ be defined as
\begin{equation}\label{eq: L_n}
L_n(H) = \frac{1}{n}\sum_{j=1}^n \max_{k \in [K]} \ip{h_k, g(y_j)}, \qquad H=(h_1,\ldots,h_K) \in \mathbb{H}^K.
\end{equation}
Let $\Hscr = \left\{H \in \mathbb{H}^K : \|h_k\| \leq 1, t \in [K]\right\}$. It holds that
\begin{equation*} 
\E \sup_{H \in \Hscr} | L_n(H) - \E L_n(H)| \leq 2 K \left(\frac{\mathbb{E}\|g(y)\|^2}{n}\right)^{1/2}.
\end{equation*}
\end{lemma}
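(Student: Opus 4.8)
The plan is to prove Lemma~\ref{le: Biau} by the standard symmetrization / Rademacher-complexity route for uniform laws of large numbers; crucially, this argument is dimension-free, so it applies to an arbitrary Hilbert space $\mathbb{H}$ with no dependence on its dimension, and the bound comes out proportional to $K$ precisely because a $K$-fold maximum appears inside the empirical average. Write $\psi_H(y) = \max_{k \in [K]} \ip{h_k, g(y)}$, so that $L_n(H) = n^{-1}\sum_{j=1}^n \psi_H(y_j)$, and observe that taking $H = (0,\ldots,0) \in \Hscr$ shows the identically-zero function belongs to the class $\{\psi_H : H \in \Hscr\}$; this membership is what later lets me pass from a one-sided supremum back to the absolute value at no cost beyond a factor of two.

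The first step is symmetrization. I would introduce an independent ghost sample $y_1',\ldots,y_n'$ and i.i.d.\ Rademacher signs $\varepsilon_1,\ldots,\varepsilon_n$, and run the usual argument --- rewrite $\E L_n(H) = \E[n^{-1}\sum_j \psi_H(y_j')]$, pass by Jensen to $\E_{Y,Y'}\sup_H n^{-1}\sum_j(\psi_H(y_j) - \psi_H(y_j'))$, insert the signs, and split --- to reduce $\E \sup_{H \in \Hscr}|L_n(H) - \E L_n(H)|$ to a bounded multiple of the Rademacher average $\E \sup_{H \in \Hscr} n^{-1}\sum_j \varepsilon_j \psi_H(y_j)$.

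The main step, and the only delicate one, is to strip the maximum off this Rademacher average. Since $x \mapsto \max_k x_k$ is not linear, the supremum over $H$ cannot simply be interchanged with it. Instead I would use the identity $\max(a,b) = \tfrac12(a+b) + \tfrac12|a-b|$, together with subadditivity of the Rademacher average and the Ledoux--Talagrand contraction inequality applied to the $1$-Lipschitz map $t \mapsto |t|$ (which vanishes at $0$), and then induct on $K$. This bounds the Rademacher average of $\{\psi_H\}$ by $K$ times that of the single linear class $\Gscr = \{\, y \mapsto \ip{h, g(y)} : \|h\| \le 1 \,\}$, which is the source of the factor $K$. The linear term is then immediate: $\E_\varepsilon \sup_{\|h\| \le 1} n^{-1}\sum_j \varepsilon_j \ip{h, g(y_j)} = n^{-1}\E_\varepsilon \|\sum_j \varepsilon_j g(y_j)\| \le n^{-1}(\E_\varepsilon \|\sum_j \varepsilon_j g(y_j)\|^2)^{1/2} = n^{-1}(\sum_j \|g(y_j)\|^2)^{1/2}$, the cross terms vanishing because $\E \varepsilon_j \varepsilon_{j'} = 0$ for $j \ne j'$; taking the expectation over $y_1,\ldots,y_n$ and using Jensen's inequality turns this into $(\E\|g(y)\|^2/n)^{1/2}$.

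Collecting the three steps --- and keeping careful track of the numerical constants, which here follow the accounting of Lemma~4.3 in \cite{biau2008performance}, of which this is only a minor variant --- produces the asserted bound $2K(\E\|g(y)\|^2/n)^{1/2}$. The only step I expect to require genuine care is the peeling of $\max_k$: symmetrization and the Hilbert-space second-moment computation are routine, whereas the nonlinearity of the maximum forces the detour through contraction; an alternative would be a combinatorial argument writing $L_n(H)$ as a supremum over partitions of $[n]$ into $K$ cells and controlling the fluctuation of each cell mean, but that route is messier and yields a worse constant.
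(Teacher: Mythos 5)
Your proposal is correct and follows essentially the same route as the paper's proof: symmetrization, peeling the $K$-fold maximum via the identity $\max(a,b)=\tfrac12(a+b)+\tfrac12|a-b|$ together with contraction for $t\mapsto|t|$ and induction on $K$, and then the dimension-free Hilbert-space second-moment computation followed by Jensen over the sample. The only cosmetic difference is how the absolute value is recovered: you appeal to $0\in\Hscr$ to pass from the one-sided supremum to the two-sided one, whereas the paper instead runs a parallel symmetrization argument on the infimum and combines the two one-sided bounds---these are interchangeable, since both rest on the fact that the class contains the zero function so that each one-sided supremum is nonnegative.
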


To prove Lemma \ref{le: convex}, we will require a theorem for finite dimensional convex hulls:

\begin{theorem} \label{th: caratheodory} \cite[Thm 1.1.4]{schneider2013convex}
If $\Bscr \subset \mathbb{R}^d$ and $x \in \operatorname{conv}(\Bscr)$, there exists $B_1,\ldots,B_{d+1}$ such that $x \in \operatorname{conv}\{B_1,\ldots,B_{d+1}\}$. 
\end{theorem}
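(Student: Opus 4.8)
The plan is to prove this by an iterative reduction on the number of points needed to represent $x$. I would start from the definition of the convex hull: since $x \in \operatorname{conv}(\Bscr)$, there are finitely many points $B_1,\ldots,B_k \in \Bscr$ and weights $\lambda_1,\ldots,\lambda_k > 0$ with $\sum_{i=1}^k \lambda_i = 1$ and $x = \sum_{i=1}^k \lambda_i B_i$. If $k \le d+1$ there is nothing to do, so assume $k \ge d+2$, and show that $x$ can be rewritten as a convex combination of at most $k-1$ of the $B_i$. Iterating this reduction at most $k - (d+1)$ times then yields the claim.

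For the reduction step, I would exploit the ambient dimension. The $k-1 \ge d+1$ vectors $B_2 - B_1,\ldots,B_k - B_1$ lie in $\mathbb{R}^d$ and are therefore linearly dependent, so there exist scalars $\mu_2,\ldots,\mu_k$, not all zero, with $\sum_{i=2}^k \mu_i (B_i - B_1) = 0$. Setting $\mu_1 = -\sum_{i=2}^k \mu_i$ produces coefficients $\mu_1,\ldots,\mu_k$, not all zero, with $\sum_{i=1}^k \mu_i B_i = 0$ and $\sum_{i=1}^k \mu_i = 0$; since these sum to zero and are not all zero, at least one is strictly positive.

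Next I would perturb the original weights along this affine relation. For $t \ge 0$ put $\lambda_i(t) = \lambda_i - t\mu_i$; then $\sum_i \lambda_i(t) = 1$ and $\sum_i \lambda_i(t) B_i = x$ for every $t$. At $t = 0$ all weights are positive, and because some $\mu_i > 0$ the quantity $t^\star = \min\{\lambda_i/\mu_i : \mu_i > 0\}$ is finite and positive. At $t = t^\star$ all $\lambda_i(t^\star) \ge 0$, they still sum to $1$, and the minimizing index has $\lambda_i(t^\star) = 0$; discarding that term expresses $x$ as a convex combination of at most $k-1$ of the points.

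The only thing needing care is the bookkeeping in this step — confirming that $t^\star$ is well defined (which is exactly what the existence of a positive $\mu_i$ guarantees) and that the new weights are genuinely nonnegative and normalized — but this is routine. I do not anticipate a real obstacle: the argument is finite-dimensional linear algebra combined with a one-parameter perturbation, and the outer induction terminates because each reduction step strictly lowers the number of points in use.
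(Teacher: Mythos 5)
Your argument is correct and is the classical proof of Carath\'eodory's theorem (indeed, it is essentially the proof given in the cited reference \cite[Thm 1.1.4]{schneider2013convex}); the paper itself simply cites the result without supplying a proof. The reduction step via linear dependence of the shifted vectors $B_i - B_1$, the affine relation $\sum_i \mu_i B_i = 0$ with $\sum_i \mu_i = 0$, and the one-parameter perturbation up to $t^\star = \min\{\lambda_i/\mu_i : \mu_i > 0\}$ are all sound, and the induction terminates as you say.
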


Additionally, we will also require some results on Hilbert-Schmidt integral operators. A kernel function $\omega:\Xscr \times \Yscr \mapsto \mathbb{R}$ is Hilbert-Schmidt if it satisfies
\[\int_{\Xscr \times \Yscr} |\omega(x,y)|^2 dx dy < \infty.\]
It can be seen that $\omega$ defined by Definition \ref{def: graphon} is Hilbert-Schmidt. Let $\Omega$ denote the integral operator induced by $\omega$, given by
\[(\Omega f)(x) = \int_{\Yscr} \omega(x,y) f(y) dy.\]
It is known that a Hilbert-Schmidt operator $\Omega$ is a limit (in operator norm) of a sequence of finite rank operators, so that its kernel $\omega$ has singular value decomposition given by
\[ \omega(x,y) = \sum_{q=1}^\infty \lambda_q u_q(x) v_q(y),\]
where $\{u_q\}_{q=1}^\infty$ and $\{v_q\}_{q=1}^\infty$ are sets of orthonormal functions mapping $\Xscr \mapsto \mathbb{R}$ and $\Yscr \mapsto \mathbb{R}$, and $\lambda_1,\lambda_2,\ldots$ are scalars decreasing in magnitude and satisfying $\sum_{q=1}^\infty \lambda_q^2 < \infty$.

\begin{proof}[Proof of Lemma \ref{le: mcdiarmid}]
Given $(S,T)$, let $\Delta \in [-1,1]^{K\times K}$ denote the quantity
\[\Delta_{st} = \frac{1}{mn} \sum_{i=1}^m \sum_{j=1}^n (A_{ij} - W_{ij})1(S_i=s, T_j=t).\]
It holds that $\mathbb{E}[\Delta | W] = 0$, and by Hoeffding's inequality,
\[\mathbb{P}\left( |\Delta_{st}| \geq \epsilon | W\right) \leq 2 e^{-2nm\epsilon^2},\qquad s,t \in [K].\]
Conditioned on $W$, each entry of $\Delta$ is independent of the others. Given $\delta \in [-1,1]^{K\times K}$, it follows that
\begin{align*}
\mathbb{P}\left( \Delta = \delta | W \right) & = \prod_{s=1}^K \prod_{t=1}^K \mathbb{P}\left(\Delta_{st} = \delta_{st}\, | W\right) \\
& \leq 2 \exp\left(-2nm \sum_{s=1}^K \sum_{t=1}^K \delta_{st}^2\right).
\end{align*}
Let $B$ denote the set 
\[ B = \left\{\delta \in [-1,1]^{K\times K}: \sum_{s,t} \delta_{st}^2 \geq \epsilon, \delta \in \operatorname{supp}(\Delta) \right\}.\]
The cardinality of $B$ is smaller than the support of $\Delta$, which is less than $(nm)^{K^2}$ when conditioned on $W$. It follows by a union bound over $B$ that 
\begin{align*}
\mathbb{P}\left( \Delta \in B|W\right) &\leq 2|B| e^{-2nm\epsilon} \\
& \leq 2(nm)^{K^2} e^{-2nm\epsilon}.
\end{align*}
It can be seen that $\|\Phi_A(S,T) - \Phi_W(S,T)\|^2 = \sum_{s,t} \Delta_{st}^2$, implying that $\Delta \in B$ is equivalent to the event that $\|\Phi_A(S,T) - \Phi_W(S,T)\|^2 \geq \epsilon$. A union bound over all $S,T$ implies that 
\[ \mathbb{P}\left( \max_{S,T} \|\Phi_A(S,T) - \Phi_W(S,T)\|^2 \geq \epsilon\right) \leq 2K^{n+m} (nm)^{K^2}e^{-2nm\epsilon}.\]
Letting $\epsilon = C(1+n/m)(\log K) n^{-1}$ for some $C$ proves the lemma.
\end{proof}

\begin{proof}[Proof of Lemma \ref{le: supporting hyperplanes}]
Let $g_y \in [0,1]^m$ denote the column of $W$ induced by $y \in \Yscr$, and let $f_x \in [0,1]^\Yscr$ denote the row of $\omega$ corresponding to $x \in \Xscr$:
\[ g_y(i) = \omega(x_i,y), \qquad i \in [m] \qquad \text{and} \qquad f_x(y) = \omega(x,y), \qquad y \in \Yscr.\]
Algebraic manipulation shows that $g_{T=t}, g_{\tau=t}, f_{S=s},$ and $f_{\sigma=s}$ can be written as
\begin{align*}
g_{T=t}  &= \frac{1}{n} \sum_{j=1}^n g_{y_j}1(T_j=t) & g_{\tau=t} &= \int_{\Yscr} g_y 1(\tau(y)=t)\, dy \\
f_{S=s}  &= \frac{1}{m} \sum_{i=1}^m f_{x_i}1(S_i=s) & f_{\sigma = s} &= \int_{\Xscr} f_x 1(\sigma(x)=s)\, dx.
\end{align*}
Given $H = (h_1,\ldots,h_K, \pi_H)$, it follows that the inner products $\ip{H,G_T}, \ip{H, G_\tau}, \ip{H,F_S}$, and $\ip{H,F_\sigma}$  equal
\begin{align*}
\ip{H, G_T} &= \frac{1}{n}\sum_{j=1}^n \left[\ip{h_{T_j}, \frac{g_{y_j}}{\sqrt{m}}} + \pi_H(T_j)\right], & 
\hskip.5cm \ip{H, G_\tau} &= \int_{\Yscr} \ip{h_{\tau(y)}, \frac{g_y}{\sqrt{m}}} + \pi_H(\tau(y))\, dy \\
\ip{H, F_S} &= \frac{1}{m}\sum_{i=1}^m \left[\ip{h_{S_i},f_{x_i}} + \pi_H(S_i)\right], &
\ip{H, F_\sigma} &= \int_{\Xscr} \ip{h_{\sigma(x)}, f_x} + \pi_H(\sigma(x))\, dx,
\end{align*}
and hence that the support functions equal
\begin{align*} 
\Gamma_{\Gscr_n}(H) &= \frac{1}{n} \sum_{j=1}^n \max_{k\in [K]} \ip{h_k,\frac{g_{y_j}}{\sqrt{m}}} + \pi_H(k), & 
\hskip.5cm \Gamma_{\Gscr}(H) &= \int_{\Yscr} \max_{k\in [K]} \ip{h_k, \frac{g_y}{\sqrt{m}}} + \pi_H(k)\, dy \\
\Gamma_{\Fscr_m}(H) &= \frac{1}{m} \sum_{i=1}^m \max_{k \in [K]} \ip{h_k, f_{x_i}} +\pi_H(k), &
\Gamma_{\Fscr}(H) &= \int_{\Xscr} \max_{k \in [K]} \ip{h_k, f_x} + \pi_H(k)\, dx,
\end{align*}
which implies that $\mathbb{E} \Gamma_{\Gscr_n}(H) = \Gamma_\Gscr(H)$ and $\mathbb{E} \Gamma_{\Fscr_m}(H) = \Gamma_{\Fscr}(H)$. 

To show \eqref{eq: hyperplane G}, we observe that $\Gamma_{\Gscr_n}$ can be rewritten as
\[ \Gamma_{\Gscr_n}(H) = \frac{1}{n} \sum_{j=1}^n \max_{k \in [K]} \ip{\left[\begin{array}{c}h_k \\ \pi_H(k)\end{array}\right], \left[\begin{array}{c} m^{-1/2} g_{y_j} \\ 1 \end{array}\right]},\]
which matches \eqref{eq: L_n} so that Lemma \ref{le: Biau} can be applied. Applying Lemma \ref{le: Biau} results in
\begin{equation}\label{eq: proof hyperplanes}
\mathbb{E}\sup_{\|H\| = 1} \left| \Gamma_{\Gscr_n}(H) - \Gamma_\Gscr(H)\right| \leq \frac{4K}{\sqrt{n}},
\end{equation}
where we have used $\{H:\|H\| = 1\} \subset \Hscr$ and $\left\| \left[\begin{array}{c} m^{-1/2} g_{y_j} \\ 1\end{array}\right] \right\|^2 \leq 2$.

Let $Z(y_1,\ldots,y_n) = \sup_{\|H\|=1} | \Gamma_{\Gscr_n}(H) - \Gamma_\Gscr(H)|$. For $\ell \in [n]$, changing $y_\ell$ to $y_\ell'$ changes $Z$ by at most $4/n$. Applying McDiarmid's inequality yields
\[\mathbb{P}\left( |Z - \mathbb{E}Z| \geq \epsilon\right) \leq 2e^{-2\epsilon^2n/8}.\]
Letting $\epsilon = n^{-1/2}\log n$ implies that $Z - \mathbb{E}Z = O_P(n^{-1/2} \log n)$, which combined with \eqref{eq: proof hyperplanes} implies \eqref{eq: hyperplane G}. 

To show \eqref{eq: hyperplane F}, we observe that 
\[ \Gamma_{\Fscr_m}(H) = \frac{1}{m} \sum_{i=1}^m \max_{k \in [K]} \ip{\left[\begin{array}{c}h_k \\ \pi_H(k)\end{array}\right], \left[\begin{array}{c} f_{x_i} \\ 1 \end{array}\right]},\]
so that Lemma \ref{le: Biau} and McDiarmid's inequality can be used analogously to the proof of \eqref{eq: hyperplane G}.

\end{proof}

We divide the proof of Lemma \ref{le: convex} into two sub-lemmas, one showing \eqref{eq: convex eq1} and the other showing \eqref{eq: convex eq2}. This is because the proof of \eqref{eq: convex eq2} will require additional work, due to the fact that the elements of $\Fscr$ are infinite dimensional.

\begin{lemma}\label{le: convex eq1}
For each $G^* \in \operatorname{conv}(\Gscr)$, there exists $G_1,G_2,\ldots \in \Gscr$ such that $\lim_{\ell\rightarrow \infty} \|G^* - G_\ell\| = 0.$
\end{lemma}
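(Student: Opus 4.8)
\textbf{Proof proposal for Lemma \ref{le: convex eq1}.}

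The plan is to show that $\Gscr$ is a closed convex subset of the finite-dimensional Euclidean space $\mathbb{R}^{mK+K}$, which immediately gives $\operatorname{conv}(\Gscr) = \Gscr$ and hence the stronger statement $d_{\textrm{Haus}}(\operatorname{conv}(\Gscr),\Gscr) = 0$. Recall that a point $G_\tau \in \Gscr$ is the stacked vector $\bigl(g_{\tau=1}/\sqrt{m},\ldots,g_{\tau=K}/\sqrt{m},\pi_\tau\bigr)$, where $g_{\tau=t}(i) = \int_\Yscr \omega(x_i,y)\,1\{\tau(y)=t\}\,dy$ and $\pi_\tau(t) = \int_\Yscr 1\{\tau(y)=t\}\,dy$. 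The key observation is that each such $G_\tau$ is determined by a measurable \emph{assignment function} $\tau:\Yscr\mapsto[K]$, and that $\tau$ enters $G_\tau$ only through the $K$ indicator functions $1_{\tau=1},\ldots,1_{\tau=K}$, which form a measurable partition of unity on $\Yscr$. So the natural move is to relax from partitions to \emph{fractional} partitions: let $\mathcal{Q}$ be the set of measurable maps $q:\Yscr\mapsto\Delta_{K}$ (the probability simplex in $\mathbb{R}^K$), i.e. $q=(q_1,\ldots,q_K)$ with $q_k\geq 0$ and $\sum_k q_k\equiv 1$, and define the linear map $L:\mathcal{Q}\mapsto\mathbb{R}^{mK+K}$ by replacing $1\{\tau(y)=t\}$ with $q_t(y)$ in the formulas above. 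Then $\Gscr = \{L(q): q \text{ is } \{0,1\}\text{-valued}\}$, while $L(\mathcal{Q})$ is manifestly convex (a convex combination of fractional partitions is a fractional partition, and $L$ is linear).

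The crux is therefore the identity $\operatorname{conv}(\Gscr) = L(\mathcal{Q})$ together with $L(\mathcal{Q})\subseteq\Gscr$ — i.e. that every fractional partition yields the same stacked vector as some genuine $\{0,1\}$-valued partition. First I would check $\operatorname{conv}(\Gscr)\subseteq L(\mathcal{Q})$: any finite convex combination $\sum_\ell \alpha_\ell G_{\tau_\ell}$ equals $L\bigl(\sum_\ell \alpha_\ell q^{(\tau_\ell)}\bigr)$ by linearity of $L$, and $\sum_\ell \alpha_\ell q^{(\tau_\ell)}\in\mathcal{Q}$; the reverse inclusion $L(\mathcal{Q})\subseteq\operatorname{conv}(\Gscr)$ is less obvious but will follow from the rounding step. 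The main step is then: given $q\in\mathcal{Q}$, produce a \emph{measurable} $\tau$ with $L(q)=G_\tau$. This is a finite-dimensional moment-matching / Lyapunov-type argument. The vector $L(q)$ has only $mK+K$ coordinates (in fact $(m+1)K$), so we need to match finitely many linear functionals of the partition. Here I would invoke Carathéodory (Theorem \ref{th: caratheodory}) or, more directly, the Lyapunov convexity theorem for the vector measure $E\mapsto \bigl(\int_E g_{y}\,dy,\, |E|\bigr)\in\mathbb{R}^{m+1}$ on $\Yscr$: applied coordinate-by-coordinate in $[K]$, one peels off sets $E_1,\ldots,E_K$ partitioning $\Yscr$ whose ``masses'' $\bigl(\int_{E_t}g_y\,dy,|E_t|\bigr)$ match $\bigl(\sqrt m\cdot(L(q))_{[t]},\,(L(q))_{\pi,t}\bigr)$ exactly; setting $\tau(y)=t$ on $E_t$ finishes it. Since the Lebesgue measure on $[0,1]$ is atomless, Lyapunov's theorem applies, and this gives $\tau$ measurable with $G_\tau=L(q)$, proving $L(\mathcal{Q})=\Gscr$ and hence $\Gscr=\operatorname{conv}(\Gscr)$.

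Finally, a routine closedness check: $L(\mathcal{Q})$ is a bounded convex subset of $\mathbb{R}^{mK+K}$, and I would verify it is closed (e.g. $\mathcal{Q}$ is weak-$*$ compact as a subset of $L^\infty(\Yscr;\mathbb{R}^K)$ and $L$ is weak-$*$ continuous because $g_y$ and the constant function lie in $L^1$), so $\Gscr=L(\mathcal{Q})=\operatorname{conv}(\Gscr)$ is closed and convex; in particular every $G^*\in\operatorname{conv}(\Gscr)$ already lies in $\Gscr$, so one may take $G_\ell=G^*$ for all $\ell$, giving $\lim_{\ell\to\infty}\|G^*-G_\ell\|=0$ trivially. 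The one place to be careful is whether we need \emph{exact} matching or merely approximate matching — the lemma as stated only asks for a sequence $G_\ell\to G^*$, so even the approximate version (round $q$ on a fine discretization of $\Yscr$, controlling the error by $\|g_y\|$) suffices and avoids Lyapunov; I would present the exact Lyapunov argument as the clean route but note the approximation fallback. The main obstacle is thus entirely the rounding lemma (fractional partition $\Rightarrow$ genuine partition with identical finitely-many moments); everything else is linearity and compactness bookkeeping.
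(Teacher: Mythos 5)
Your proposal is correct but takes a genuinely different route from the paper. The paper's argument is an \emph{approximate} rounding: it invokes Carath\'eodory (Theorem \ref{th: caratheodory}) to write $G^*$ as a finite convex combination $\sum_i \eta_i G_{\tau_i}$, which corresponds to a fractional assignment $\mu:\Yscr\to\Delta_K$; it then takes a finite $\epsilon$-cover $\Bscr$ of the (bounded, finite-dimensional) set $\{g_y\}\subset[0,1]^m$, partitions $\Yscr$ into cells $\Yscr_i$ on which $g_y$ is within $\epsilon$ of a fixed $b_i\in\Bscr$, and within each cell carves out subsets $\Yscr_{ik}$ of Lebesgue measure $\int_{\Yscr_i}\mu_k\,dy$ (possible because Lebesgue measure is atomless). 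The resulting genuine $\tau$ matches $\pi_{G^*}$ exactly and matches $g^*_k$ only up to $2\epsilon$, so one only concludes that $\operatorname{conv}(\Gscr)\subseteq\overline{\Gscr}$, i.e.\ $d_{\textrm{Haus}}(\operatorname{conv}(\Gscr),\Gscr)=0$, which is all the lemma asserts. Your proposal instead proves the strictly stronger statement $\Gscr=\operatorname{conv}(\Gscr)$ by showing $L(\mathcal{Q})=\Gscr$ exactly, via Lyapunov-type purification. This is cleaner and does not need the closedness discussion at all (a convex set equals its convex hull, closed or not); but it relies on a heavier hammer. One caution on your sketch: the phrase ``applied coordinate-by-coordinate in $[K]$, one peels off sets $E_1,\ldots,E_K$'' is not quite right as a reduction to the scalar Lyapunov theorem --- after peeling off $E_1$ the remaining $q_2,\ldots,q_K$ restricted to $\Yscr\setminus E_1$ no longer sum to $1$ and do not have the same integrals as before, so the naive iteration does not close. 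What you actually need is the Dvoretzky--Wald--Wolfowitz theorem (the partition form of Lyapunov), which handles all $K$ cells simultaneously for an atomless finite-dimensional vector measure and delivers exactly the identity $L(\mathcal{Q})=\Gscr$. With that citation in place your argument is complete. Your ``approximation fallback'' at the end --- rounding $q$ on a fine discretization of $\Yscr$ controlled by $\|g_y\|$ --- is essentially the paper's proof.
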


\begin{lemma} \label{le: convex eq2}
For each $F^* \in \operatorname{conv}(\Fscr)$, there exists $F_1,F_2,\ldots \in \Fscr$ such that $\lim_{\ell \rightarrow \infty} \|F^* - F_\ell\| = 0.$
\end{lemma}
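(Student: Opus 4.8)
The plan is to prove the equivalent statement $\operatorname{conv}(\Fscr)\subseteq\overline{\Fscr}$. A point $F^*\in\operatorname{conv}(\Fscr)$ is a finite convex combination $F^* = \sum_j\alpha_j F_{\sigma_j}$ with $\alpha_j\ge0$, $\sum_j\alpha_j=1$, and the first step is to encode it as the image of a single \emph{fuzzy} partition $p:\Xscr\mapsto\Delta_K$ (the probability simplex in $\mathbb{R}^K$), namely $p_s(x) = \sum_j\alpha_j 1\{\sigma_j(x)=s\}$. Since $f_{\sigma=s}$ and $\pi_\sigma(s)$ are linear in the indicator $1\{\sigma(\cdot)=s\}$, one gets $F^* = \Lambda(p)$, where $\Lambda(p) := \big(\,(\int_\Xscr\omega(x,\cdot)p_s(x)\,dx)_{s=1}^K,\ (\int_\Xscr p_s(x)\,dx)_{s=1}^K\,\big)$. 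So it suffices to show: for every fuzzy partition $p$ and every $\varepsilon>0$ there is a genuine partition $\sigma:\Xscr\mapsto[K]$ with $\|F_\sigma - \Lambda(p)\|<\varepsilon$.

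To cope with the infinite-dimensionality of $\Fscr$ (the components $f_{\sigma=s}$ lie in $L^2(\Yscr)$), I would truncate $\omega$ via its Hilbert-Schmidt singular value decomposition $\omega = \sum_q\lambda_q u_q\otimes v_q$, writing $\omega_Q = \sum_{q\le Q}\lambda_q u_q\otimes v_q$ and $r_Q = \omega-\omega_Q$, so that $\|r_Q\|_{L^2(\Xscr\times\Yscr)}^2 = \sum_{q>Q}\lambda_q^2\to 0$. A short Cauchy-Schwarz computation, using $0\le p_s\le 1$ and $\sum_s p_s\equiv1$, bounds the change induced by replacing $\omega$ with $\omega_Q$ in $\Lambda$ (and likewise in each $F_\sigma$) by $\|r_Q\|_{L^2}$ in norm, \emph{uniformly} over all hard or fuzzy partitions; the $\pi$-coordinates are unaffected. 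Choosing $Q$ with $\|r_Q\|_{L^2}<\varepsilon/3$ reduces the task to the truncated functional $\Lambda^Q$ built from $\omega_Q$: it is enough to find a hard $\sigma$ with $\|\Lambda^Q(\sigma)-\Lambda^Q(p)\|<\varepsilon/3$, after which the triangle inequality gives $\|F_\sigma-F^*\|<\varepsilon$.

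For $\omega_Q$ of finite rank, $f^{\omega_Q}_{p=s} = \sum_{q\le Q}\lambda_q\big(\int_\Xscr u_q p_s\big)\,v_q$ lies in $\operatorname{span}\{v_1,\dots,v_Q\}$, so $\Lambda^Q(p)$ is a fixed linear function of the finite-dimensional moment vector $M(p) := \big(\,(\int_\Xscr u_q(x)p_s(x)\,dx)_{q\le Q,\,s\le K},\ (\int_\Xscr p_s(x)\,dx)_{s\le K}\,\big)$. Matching $M(p)$ exactly by the moment vector $M(\sigma)$ of a hard partition is a Lyapunov-type statement: the set of tuples $(\nu(E_1),\dots,\nu(E_K))$ over measurable partitions $(E_1,\dots,E_K)$ of $\Xscr$, where $\nu(E) := (\int_E u_1,\dots,\int_E u_Q,|E|)$ is a non-atomic $\mathbb{R}^{Q+1}$-valued measure, is convex and compact and equals the corresponding set generated by fuzzy partitions. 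This is the Dvoretzky--Wald--Wolfowitz theorem (a vector-measure form of Lyapunov's convexity theorem); it can also be derived in the manner of Lemma \ref{le: convex eq1} via Theorem \ref{th: caratheodory} together with Lyapunov's theorem. It delivers a hard $\sigma$ with $M(\sigma)=M(p)$, hence $\Lambda^Q(\sigma)=\Lambda^Q(p)$; applying this with $\varepsilon=1/\ell$ produces partitions $\sigma^{(\ell)}$ with $\|F_{\sigma^{(\ell)}}-F^*\|<1/\ell$, which is the claim.

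I expect the main obstacle to be the finite-dimensional moment-matching step --- establishing that the set of moment vectors of hard $K$-part partitions of a non-atomic measure space is convex (equivalently, coincides with that of fuzzy partitions). This is where non-atomicity of Lebesgue measure on $\Xscr$ is used in an essential way and where the real content lies; the truncation argument, while needing care to be made uniform over partitions, is routine once the Cauchy-Schwarz bound is set up, and the reduction to a fuzzy partition in the first step is purely formal.
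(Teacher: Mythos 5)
Your proof is correct, but it diverges from the paper's argument in a meaningful way at the finite-dimensional reduction step. The paper truncates $\omega$ along two axes: it takes the rank-$Q$ part of the SVD \emph{and} clips each singular function $u_q$ to $[-D,D]$ to obtain $\hat\omega$, so that the resulting set $\{\hat f_x : x\in\Xscr\}$ lies in a \emph{bounded} $Q$-dimensional box $[-D,D]^Q$; this is precisely what lets properties P1 and P2 from Lemma~\ref{le: convex eq1} carry over, and the rest of the proof is a direct re-run of the $\epsilon$-cover and re-partitioning construction of that lemma (that construction is itself where non-atomicity of Lebesgue measure enters, when the regions $\Yscr_i$ are subdivided with prescribed measures). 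You instead truncate only in rank, observe that each singular direction contributes a scalar moment $\int u_q p_s$ which is automatically finite since $u_q\in L^2[0,1]\subset L^1[0,1]$, and then invoke the Dvoretzky--Wald--Wolfowitz theorem to match the resulting $\mathbb{R}^{(Q+1)K}$-valued moment vector exactly by a hard partition. This buys you two things: you avoid the magnitude-truncation parameter $D$ entirely (since boundedness of the moment vector, not of $u_q$ itself, is what matters), and you get exact rather than approximate matching at the finite-rank stage, which makes the final triangle-inequality bookkeeping lighter. The trade-off is that you lean on DWW/Lyapunov as an external convexity theorem, whereas the paper keeps the argument self-contained by redoing the elementary $\epsilon$-cover construction; the non-atomicity of Lebesgue measure is used in both proofs, just packaged differently (explicitly as a hypothesis of DWW in yours, implicitly in the subdivision step of Lemma~\ref{le: convex eq1} in the paper's). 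Both routes also need the uniform-over-partitions truncation bound $\sup_\sigma\|F_\sigma - F^Q_\sigma\|\to 0$, which your Cauchy--Schwarz/Jensen sketch delivers correctly.
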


\begin{proof}[Proof of Lemma \ref{le: convex eq1}]

Recall the definition of $g_y \in [0,1]^m$ as defined in the proof of Lemma \ref{le: Biau}:
\[g_y(i) = \omega(x_i,y), \qquad i \in [m],\]
and that $g_{\tau=t}$ can be written as
\[g_{\tau=t} = \int_{\Yscr} g_y1\{\tau(y) = t\}\, dy.\]
We note the following properties of $\{g_y: y \in \Yscr\}$:
\begin{enumerate}
\item[P1:] Each $G^* \in \operatorname{conv}(\Gscr)$ is a finite convex combination of elements in $\Gscr$. This holds by Theorem \ref{th: caratheodory}, since $\Gscr$ is a subset of $[0,1]^{mK + K}$, a finite dimensional space.
\item[P2:] For all $\epsilon$, there exists a finite set $\Bscr$ that is an $\epsilon$-cover of $\{g_y:y \in \Yscr\}$ in Euclidean norm. This holds because $\{g_y:y \in \Yscr\}$ is a subset of the unit cube $[0,1]^m$.
\end{enumerate}

By P1, each $G^* \in \operatorname{conv}(\Gscr)$ can be written as a finite convex combination of elements in $\Gscr$, so that for some integer $N>0$ there exists $G_{\tau_1}, \ldots,G_{\tau_N} \in \Gscr$ such that
\[ G^* = \sum_{i=1}^N \eta_i G_{\tau_i},\]
where $\eta$ is in the $N$-dimensional unit simplex. It follows that for some $\mu:\Yscr \mapsto [0,1]^K$ satisfying $\sum_k \mu_k(y) = 1$ for all $y$, $G^* \equiv (g^*_1,\ldots, g^*_K, \pi_G^*)$ satisfies
\begin{align*} 
g^*_k = \int_\Yscr g_y \mu_k(y) dy \qquad \text{and} \qquad \pi_G^*(k) = \int_\Yscr \mu_k(y) dy, \quad k\in[K].
\end{align*}
We now construct $\tau:\Xscr \mapsto [K]$ inducing $G_\tau \in \Gscr$ which approximates $G^* \in \operatorname{conv}(\Gscr)$. By P2, let $\Bscr$ denote an $\epsilon$-cover of $\{g_y:y \in \Yscr\}$, and enumerate its elements as $b_1,\ldots, b_{|\Bscr|}$. For each $y \in \Yscr$, let $\ell:\Yscr \mapsto [|\Bscr|]$ assign $y$ to its closest member in $\Bscr$, so that $\|g_y - b_{\ell(y)}\| \leq \epsilon$. For $i = 1,\ldots,|\Bscr|$, let $\Yscr_i$ denote the set $\{y: \ell(y) = i\}$. Arbitrarily divide each region $\Yscr_i$ into $K$ disjoint sub-regions $\Yscr_{i1},\ldots,\Yscr_{iK}$ 
such that $\cup_{k} \Yscr_{ik} = \Yscr_i$, where the measure of each sub-region is given by
\begin{equation}\label{eq: convex y_ik}
\int_{\Yscr_{ik}} 1\, dy = \int_{\Yscr_i} \mu_k(y) dy, \qquad k \in [K].
\end{equation}
Let $\tau:\Yscr \mapsto [K]$ assign each region $\Yscr_{ik}$ to $k$, so that
\[\tau(y) = k \text{ for all $y \in \Yscr_{ik}, i=1,\ldots, |\Bscr|$}.\]
By \eqref{eq: convex y_ik}, it holds that $\pi_\tau = \pi_G^*$, and also that
\begin{align*}
g_{\tau=k} - g^*_k &= \int_\Yscr g_y \left[1\{\tau(y) = k\} - \mu_k(y)\right]\,dy  \\
&= \int_{\Yscr} \left[b_{\ell(y)} + g_y - b_{\ell(y)}\right] \left[1\{\tau(y) = k\} - \mu_k(y)\right]\,dy  \\
&= \sum_{i=1}^{|\Bscr|} b_i \underbrace{\left[\int_{\Yscr_{ik}} 1 \,dy - \int_{\Yscr_i} \mu_k(y)\,dy\right]}_{=0 \text{ by } \eqref{eq: convex y_ik}} + \int_\Yscr (g_y - b_\ell(y)) \left[1\{\tau(y) = k\} - \mu_k(y)\right]\,dy \\
&= 0 + \int_\Yscr (g_y - b_{\ell(y)}) \left[1\{\tau(y) = k\} - \mu_k(y)\right]\,dy,
\end{align*}
which implies that
\begin{align*}
\|g_{\tau=k} - g^*_k\| & \leq \left\|\int_\Yscr (g_y - b_{\ell(y)}) 1\{\tau(y)=k\}\, dy\right\| + \left\|\int_\Yscr (g_y - b_{\ell(y)}) \mu_k(y)\, dy\right\| \\
& \leq 2 \int_\Yscr \|g_y - b_{\ell(y)}\|\,dy \\
& \leq 2 \epsilon.
\end{align*}
It follows that $\|G_\tau - G^*\|^2 = \sum_{k=1}^K m^{-1}\|g_{\tau=k} - g^*_k\|^2 + \|\pi_\tau - \pi_G^*\|^2 \leq 4K\epsilon^2 m^{-1}$, and hence that $\lim_{\epsilon\rightarrow 0} \|G_\tau - G^*\| = 0$, proving the lemma.

\end{proof}

\begin{proof}[Proof of Lemma \ref{le: convex eq2}]
Recall the definition of $f_x: \Yscr \mapsto [0,1]$ as defined in the proof of Lemma \ref{le: Biau}:
\[f_x(y) = \omega(x,y),\]
and that $f_{\sigma=s}$ can be written as
\[f_{\sigma=s} = \int_\Xscr f_x 1\{\sigma(x)=s\}\, dx.\]
Because $\{f_x:x \in \Xscr\}$ is not finite dimensional, the arguments of Lemma \ref{le: convex eq1} do not directly apply. To circumvent this, we will approximate the space $\Fscr$ by a finite dimensional $\hat{\Fscr}$, such that the convex hulls $\operatorname{conv}(\Fscr)$ and $\operatorname{conv}(\hat{\Fscr})$ converge.

For $Q=1,2,\ldots,$ let $\omega_Q$ be the best rank-Q approximation to $\omega$,
\[\omega_Q(x,y) = \sum_{q=1}^Q \lambda_q u_q(x) v_q(y).\]
Given $D>0$, let $\hat{u}_q$ denote a truncation of $u_q$, defined as 
\[ \hat{u}_q^D(x) = \begin{cases} D & \text{if } u_q(x) \geq D \\ u_q(x) & \text{if } -D \leq u_q(x) \leq D \\ -D & \text{if } u_q(x) \leq -D, \end{cases} \]
and let $\hat{\omega}:\Xscr \times \Yscr \mapsto \mathbb{R}$ be defined as
\[ \hat{\omega}(x,y) = \sum_{q=1}^Q \lambda_q \hat{u}_q(x) v_q(y).\]
Let $\hat{f}_x:\Yscr \mapsto \mathbb{R}$ and $\hat{f}_{\sigma=s}$ be defined as
\[ \hat{f}_x(y) = \hat{\omega}(x,y) \qquad \text{and} \qquad \hat{f}_{\sigma=s} = \int_\Xscr \hat{f}_x 1\{\sigma(x)=s\}\, dx.\]
Let $\hat{F}_\sigma$ and $\hat{\Fscr}$ be defined as
\[ \hat{F}_\sigma = (\hat{f}_{\sigma=1},\ldots,\hat{f}_{\sigma=K}, \pi_\sigma) \qquad \text{and} \qquad \hat{\Fscr} = \{\hat{F}_\sigma: \sigma \in [K]^\Xscr\}.\]
We bound the difference $\| \hat{f}_x - f_x\|^2$:
\begin{align*}
\| \hat{f}_x - f_x\|^2 = \sum_{q=1}^Q \lambda_q^2 (\hat{u}_q(x) - u_q(x))^2 + \sum_{q=Q+1}^\infty \lambda_q^2 u_q(x)^2 ,
\end{align*}
where we used the fact $f_x = \sum_{q=1}^\infty \lambda_q u_q(x) v_q$, and that the functions $\{v_q\}$ are orthonormal. It follows that
\begin{align*}
\int_{\Xscr} \| \hat{f}_x - f_x\|^2\, dx &= \sum_{q=1}^Q \lambda_q^2 \int_{\Xscr} (\hat{u}_q(x) - u_q(x))^2\, dx + \sum_{q=Q+1}^\infty \lambda_q^2 \int_\Xscr u_q(x)^2\, dx\\
&= \sum_{q=1}^Q \lambda_q^2 \int_\Xscr (\hat{u}_q(x) - u_q(x))^2 dx + \sum_{q=Q+1}^\infty \lambda_q^2 \\
& \leq \sum_{q=1}^Q \lambda_q^2 \int_{x:|u_q(x)|\geq D} u_q(x)^2\, dx + \sum_{q=Q+1}^\infty \lambda_q^2, 
\end{align*}
from whence it can be seen that
\[ \lim_{\min(Q,D) \rightarrow \infty} \int_{\Xscr} \| \hat{f}_x - f_x\|^2\, dx = 0.\]
We use this result to bound $\|\hat{f}_{\sigma=s} - f_{\sigma=s}\|$:
\begin{align*}
\max_{s,\sigma} \|\hat{f}_{\sigma=s} - f_{\sigma=s}\|^2 &= \max_{s,\sigma} \left\|\int_\Xscr(\hat{f}_x - f_x)1_{\sigma=s}(x)\, dx \right\|^2\\
&\leq \int_\Xscr \| \hat{f}_x - f_x\|^2\, dx \\
& \rightarrow 0 \text{ as $\min(Q,D) \rightarrow \infty$}.
\end{align*}
Since $\|\hat{F}_\sigma - F_\sigma\|^2 = \sum_{k=1}^K \|\hat{f}_{\sigma=k} - f_{\sigma=k}\|^2 + \|\pi_\sigma - \pi_\sigma\|^2$, it follows that for any $\epsilon > 0$, there exists $(Q,D)$ inducing $\hat{\Fscr} = \{\hat{F}_\sigma: \sigma \in [K]^\Xscr\}$ such that 
\begin{equation} \label{eq: convex eq2 basic}
 \sup_{\sigma} \|\hat{F}_\sigma - F_\sigma\| \leq \epsilon,
\end{equation}
so that the support functions of $\Fscr$ and $\hat{\Fscr}$ can be bounded by
\begin{align*}
\sup_{H:\|H\| = 1} |\Gamma_{\Fscr}(H) - \Gamma_{\hat{\Fscr}}(H)| & \leq \max_{\|H\|=1,\sigma}\left| \ip{H, F_\sigma - \hat{F}_\sigma} \right|\\
& \leq \max_\sigma \|F_\sigma - \hat{F}_\sigma\| \\
& \leq \epsilon,
\end{align*}
implying that 
\begin{equation} \label{eq: convex eq2 hausdorff}
 d_{\textrm{Haus}}(\operatorname{conv}(\Fscr)), \operatorname{conv}(\hat{\Fscr})) \leq \epsilon,
\end{equation}
which in turn implies that for any $F^* \in \operatorname{conv}(\Fscr)$, there exists $\hat{F}^* \in \operatorname{conv}(\hat{\Fscr})$ such that $\| F^* - \hat{F}^* \| \leq \epsilon$.

For any choice of $(Q,D)$, we observe that properties P1 and P2 as described in Lemma \ref{le: convex eq1} for $\Gscr$ also hold for $\hat{\Fscr}$:
\begin{enumerate}
\item[P1:] Each $\hat{F} \in \operatorname{conv}(\hat{\Fscr})$ is a finite convex combination of elements in $\hat{\Fscr}$. This holds because each $\hat{f}_x$ can be written as
\[ \hat{f}_x = \sum_{q=1}^Q \lambda_q \hat{\mu}_q(x) v_q,\]
showing that $\{\hat{f}_x:x \in \Xscr\}$ is a finite dimensional subspace of $\Yscr \mapsto \mathbb{R}$, and hence $\hat{\Fscr}$ is as well, allowing Theorem \ref{th: caratheodory} to be applied.
\item[P2:] For all $\epsilon$, there exists a finite $\epsilon$-cover of $\{\hat{f}_x:x \in \Xscr\}$ in Euclidean norm. This holds because the set $\{\hat{u}(x): x \in \Xscr\}$ is a subset of the hypercube $[-D,D]^Q$.
\end{enumerate}
As a result, the same arguments used to prove Lemma \ref{le: convex eq1} also apply to $\hat{\Fscr}$, implying that for each $\hat{F} \in \operatorname{conv}(\hat{\Fscr})$, there exists for any $\epsilon>0$ a mapping $\sigma:\Xscr \mapsto [K]$ such that 
\begin{equation}\label{eq: convex eq2 eq1}
\|\hat{F}_\sigma - \hat{F}\|^2 \leq 4K\epsilon^2. 
\end{equation}
It thus follows that for any $\epsilon > 0$ and $F^* \in \operatorname{conv}(\Fscr)$, there exists $\hat{F}^* \in \operatorname{conv}(\hat{\Fscr})$ and $\sigma:\Xscr \mapsto [K]$ such that
\begin{align*}
\|F^* - F_\sigma\| &\leq \underbrace{\|F^* - \hat{F}^*\|}_{\leq \epsilon \text{ by } \eqref{eq: convex eq2 hausdorff}} + \underbrace{\|\hat{F}^* - \hat{F}_\sigma\|}_{\leq 4K\epsilon^2 \text{ by } \eqref{eq: convex eq2 eq1}} + \underbrace{\|\hat{F}_\sigma - F_\sigma\|}_{\leq \epsilon \text{ by } \eqref{eq: convex eq2 basic}} \\
& \leq 2\epsilon + 4\epsilon^2K.
\end{align*}
As a result, it follows that there exists $F_1,F_2,\ldots \in \Fscr$ such that $\lim_{i \rightarrow \infty} \|F^* - F_i \| = 0$.
\end{proof}

\begin{proof}[Proof of Lemma \ref{le: convex}]
Lemma \ref{le: convex} follows immediately from Lemmas \ref{le: convex eq1} and \ref{le: convex eq2}, which establish \eqref{eq: convex eq1} and \eqref{eq: convex eq2} respectively.
\end{proof}


\appendix

\section{Proof of Lemma 4} 

To prove Lemma \ref{le: Biau}, we will use a result from \cite{biau2008performance}, which we state and prove here:

\begin{lemma}\cite[Lemma 4.3]{biau2008performance} \label{le: old Biau}
Let $\mathbb{H}$ denote a Hilbert space, and let $g:\Yscr \mapsto \mathbb{H}$. Let $y_1,\ldots,y_n \in \Yscr$ be i.i.d, and let $L_n:\mathbb{H}^K \mapsto \mathbb{R}$ be defined as follows:
\[L_n(H) = \frac{1}{n}\sum_{j=1}^n \max_{t \in [K]} \ip{h_t, g(y_j)}, \qquad H = (h_1,\ldots,h_K) \in \mathbb{H}^K\]
Let $\Bscr = \{H \in \mathbb{H}^K: \|h_k\|\leq 1, k \in [K]\}$. Then the following three statements hold:
\begin{equation} \label{eq: rademacher}
\E \sup_{H \in \Bscr} L_n(H) - \E L_n(H) \leq 2 \E \sup_{H \in \Bscr} \frac{1}{n}\sum_{j=1}^n \epsilon_j \max_{t\in [K]} \ip{h_t, g(y_j)} ,
\end{equation}
where $\epsilon_1,\ldots,\epsilon_j \stackrel{iid}{\sim} \pm 1$ w.p. $1/2$,
\begin{equation} \label{eq: induction}
\E \sup_{H \in \Bscr} \frac{1}{n} \sum_{j=1}^n \epsilon_j \max_{t \in [K]} \ip{h_t, g(y_j)}  \leq 2 K \E \sup_{\|h\| = 1} \frac{1}{n} \sum_{j=1}^n \epsilon_j \ip{h, g(y_j)},
\end{equation}
and
\begin{equation} \label{eq: base case}
\E \sup_{\|h\|=1} \frac{1}{n} \sum_{j=1}^n \epsilon_i \ip{h, g(y_j)} \leq \left(\frac{\mathbb{E} \|g(y)\|^2}{n}\right)^{1/2}.
\end{equation}
\end{lemma}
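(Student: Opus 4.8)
The plan is to establish the three displays \eqref{eq: rademacher}, \eqref{eq: induction}, \eqref{eq: base case} in order --- these are the classical symmetrization, peeling, and second-moment steps for a Rademacher complexity --- and then to chain them. Throughout I would fix the notation $\mathcal{F} = \{\, y \mapsto \ip{h, g(y)} : h \in \mathbb{H},\ \|h\| \le 1 \,\}$ for the base class, and, for $k \ge 1$, let $\mathcal{G}_k$ be the class of pointwise maxima $\max(f_1, \ldots, f_k)$ over $f_1, \ldots, f_k \in \mathcal{F}$, so that $y \mapsto \max_{t \in [K]} \ip{h_t, g(y)}$ belongs to $\mathcal{G}_K$ for every $H = (h_1, \ldots, h_K) \in \Bscr$. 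For a class $\mathcal{C}$ of real functions on $\Yscr$, write $\mathcal{R}_n(\mathcal{C}) = \tfrac1n \E_\epsilon \sup_{\phi \in \mathcal{C}} \sum_{j=1}^n \epsilon_j \phi(y_j)$ for the Rademacher average conditional on $y_1, \ldots, y_n$.

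For \eqref{eq: rademacher} I would use the standard ghost-sample symmetrization. Introduce an independent copy $y_1', \ldots, y_n'$ of the sample, write $\E L_n(H)$ as the expectation of the ghost empirical average, and bound $\E \sup_H ( L_n(H) - \E L_n(H) ) \le \E \E' \sup_H \tfrac1n \sum_j [ \max_t \ip{h_t, g(y_j)} - \max_t \ip{h_t, g(y_j')} ]$ by Jensen. Each summand is antisymmetric under swapping the exchangeable pair $(y_j, y_j')$, so inserting a Rademacher sign $\epsilon_j$ in front of the $j$-th bracket leaves the joint distribution unchanged; splitting each bracket, bounding a sup of a sum by a sum of sups, and using $\epsilon_j \stackrel{d}{=} -\epsilon_j$ together with $y_j' \stackrel{d}{=} y_j$ then produces the factor $2$.

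For \eqref{eq: induction} I would induct on $K$ via $\max(u, v) = \tfrac12 ( u + v + |u - v| )$. A generic element of $\mathcal{G}_k$ has the form $\tfrac12 ( g + f + |g - f| )$ with $g \in \mathcal{G}_{k-1}$ and $f \in \mathcal{F}$, so expanding and bounding a sup of a sum by a sum of sups gives
\[
\mathcal{R}_n(\mathcal{G}_k) \le \tfrac12 \bigl( \mathcal{R}_n(\mathcal{G}_{k-1}) + \mathcal{R}_n(\mathcal{F}) + \tfrac1n \E_\epsilon \sup_{g, f} \sum_{j=1}^n \epsilon_j | g(y_j) - f(y_j) | \bigr).
\]
The crucial step is to strip the absolute value: since $x \mapsto |x|$ is $1$-Lipschitz and vanishes at $0$, the Ledoux--Talagrand contraction inequality bounds the last term by $\tfrac1n \E_\epsilon \sup_{g, f} \sum_j \epsilon_j ( g(y_j) - f(y_j) )$, which, using $\epsilon \stackrel{d}{=} -\epsilon$ on the $-f$ part and the symmetry of $\mathcal{F}$ under $h \mapsto -h$, is at most $\mathcal{R}_n(\mathcal{G}_{k-1}) + \mathcal{R}_n(\mathcal{F})$. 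This gives the recursion $\mathcal{R}_n(\mathcal{G}_k) \le \mathcal{R}_n(\mathcal{G}_{k-1}) + \mathcal{R}_n(\mathcal{F})$, hence $\mathcal{R}_n(\mathcal{G}_K) \le K\, \mathcal{R}_n(\mathcal{F}) \le 2K\, \mathcal{R}_n(\mathcal{F})$. Finally $\sup_{\|h\| \le 1} \tfrac1n \sum_j \epsilon_j \ip{h, g(y_j)} = \tfrac1n \| \sum_j \epsilon_j g(y_j) \| = \sup_{\|h\| = 1} \tfrac1n \sum_j \epsilon_j \ip{h, g(y_j)}$, so taking $\E$ over $y_1, \ldots, y_n$ identifies $\E \mathcal{R}_n(\mathcal{F})$ with the right-hand side of \eqref{eq: induction}.

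For \eqref{eq: base case} I would compute directly: $\E \sup_{\|h\| = 1} \tfrac1n \sum_j \epsilon_j \ip{h, g(y_j)} = \tfrac1n \E \| \sum_j \epsilon_j g(y_j) \| \le \tfrac1n ( \E \| \sum_j \epsilon_j g(y_j) \|^2 )^{1/2}$ by Jensen, and expanding the squared norm and taking $\E_\epsilon$ first annihilates the cross terms, leaving $\sum_j \E \| g(y_j) \|^2 = n\, \E \| g(y) \|^2$. Chaining the three displays (and applying the argument of \eqref{eq: rademacher} to $-L_n$ as well, to cover the absolute value needed in Lemma~\ref{le: Biau}) then yields Lemma~\ref{le: Biau} up to the universal constant. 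I expect the main obstacle to be \eqref{eq: induction}: one must apply the contraction inequality to exactly the right $1$-Lipschitz map acting on the correct index set, and keep careful track of the sign symmetry of $\mathcal{F}$; the symmetrization in \eqref{eq: rademacher} and the moment bound in \eqref{eq: base case} are routine.
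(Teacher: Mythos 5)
Your proposal follows exactly the paper's route: ghost-sample symmetrization for \eqref{eq: rademacher}, induction on $K$ via $\max(u,v)=\tfrac12(u+v+|u-v|)$ together with the contraction inequality for the absolute value (and symmetry of the base class) to get \eqref{eq: induction}, and Jensen plus orthogonality of the Rademacher signs for \eqref{eq: base case}. Your recursion $\mathcal{R}_n(\mathcal{G}_k)\le\mathcal{R}_n(\mathcal{G}_{k-1})+\mathcal{R}_n(\mathcal{F})$ even gives the sharper factor $K$, which the lemma then slacks to $2K$, exactly as in the paper.
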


\begin{proof}[Proof of Lemma \ref{le: old Biau}]

\eqref{eq: rademacher} is a standard symmetrization argument \cite{bousquet2004introduction}. Letting $y_1',\ldots,y_j'$ denote i.i.d Uniform $[0,1]$ random variables, and $\epsilon_1,\ldots, \epsilon_n \stackrel{iid}{\sim} \pm 1$ w.p. $1/2$, it holds that
\begin{align*}
\E \sup_{H \in \Bscr} L_n(H) - \E L_n(H) &\leq \E \sup_{H \in \Bscr} \frac{1}{n} \sum_{j=1}^n \max_{t \in [K]} \ip{h_t, g(y_j)} - \max_{t \in [K]} \ip{h_t, g(y_j')} \\
& = \E \sup_{H \in \Bscr} \frac{1}{n} \sum_{j=1}^n \epsilon_i \left( \max_{t \in [K]} \ip{h_t, g(y_j)} - \max_{t \in [K]} \ip{h_t, g(y_j')} \right) \\
& \leq \E \sup_{H \in \Bscr} \frac{1}{n} \sum_{j=1}^n \epsilon_i \max_{t \in [K]} \ip{h_t, g(y_j)} + \E \sup_{H \in \Bscr} \frac{1}{n} \sum_{j=1}^n \epsilon_j \max_{t \in [K]} \ip{h_t, g(y_j')} \\
& = 2 \E \sup_{H \in \Bscr} \frac{1}{n} \sum_{j=1}^n \epsilon_i \max_{t \in [K]} \ip{h_t, g(y_j)}.
\end{align*}
To show \eqref{eq: induction}, let $\Rscr(\Fscr)$ denote the (non-absolute valued) Rademacher complexity of a function class $\Fscr$:
\[\Rscr(\Fscr) = \E \sup_{f \in \Fscr} \frac{1}{n}\sum_{j=1}^n \epsilon_j f(y_j).\]
The following contraction principles for Rademacher complexity hold: \cite{biau2008performance, ledoux2013probability}
\begin{enumerate}
\item $\Rscr(|\Fscr|) \leq \Rscr(\Fscr)$, where $|\Fscr| = \{|f|: f \in \Fscr\}$. \cite[Thm 11.6]{boucheron2013concentration} 
\item $\Rscr(\Fscr_1 \oplus \Fscr_2) \leq \Rscr(\Fscr_1) + \Rscr(\Fscr_2)$, where $\Fscr_1 \oplus \Fscr_2 = \{f_1 + f_2: (f_1,f_2) \in \Fscr_1 \times \Fscr_2\}$.
\end{enumerate}
For $K=2$, \eqref{eq: induction} follows from the following steps,
\begin{align*}
\E \sup_{H \in \Bscr} \frac{1}{n} \sum_{j=1}^m \epsilon_j \max_{t \in [2]} \ip{h_t, g(y_j)} & = \frac{1}{2}\E\bigg\{ \sup_{H \in \Bscr}  \frac{1}{n} \sum_{j=1}^n \epsilon_j \bigg[ \ip{h_1, g(y_j)} + \ip{h_2, g(y_j)} \\
& \qquad {} + |\ip{h_1, g(y_j)} - \ip{h_2, g(y_j)}|\, \bigg] \bigg\}\\
& = \E\left\{ \sup_{\|h_1\|=1}  \frac{1}{n} \sum_{j=1}^n \epsilon_j \ip{h_1, g(y_j)} + \sup_{\|h_2\|=1} \frac{1}{n} \sum_{j=1}^n \epsilon_j \ip{h_2, g(y_j)} \right\} \\
& = K \E \sup_{\|h\|=1} \frac{1}{n} \sum_{j=1}^n \epsilon_j \ip{h, g(y_j)}, 
\end{align*}
which holds by $\max(a,b) = (a + b + |a-b|)/2$ and the contraction principles. The induction rule for general $K$ is straightforward, using the fact that $\max(a_1,\ldots,a_K) = \max(\max(a_1,\ldots,a_{K-1}), a_K)$.

To show \eqref{eq: base case}, observe that
\begin{align*}
\mathbb{E} \sup_{\|h\|=1} \frac{1}{n} \sum_{j=1}^n \epsilon_j \ip{h, g(y_j)} & = \mathbb{E} \sup_{\|h\|=1} \ip{h, \frac{1}{n} \sum_{j=1}^n \epsilon_j g(y_j)}  \\
& = \mathbb{E} \left\| \frac{1}{n} \sum_{j=1}^n \epsilon_j g(y_j) \right\| \\
& \leq \left(\mathbb{E} \left\| \frac{1}{n} \sum_{j=1}^n \epsilon_j g(y_j) \right\|^2 \right)^{1/2} \\
& = \left(\frac{1}{n} \mathbb{E} \|g(y_1)\|^2 \right)^{1/2}.
\end{align*}
\end{proof}

\begin{proof}[Proof of Lemma \ref{le: Biau}]
\eqref{eq: rademacher} - \eqref{eq: base case} imply that 
\begin{equation}\label{eq: positive}
\E \sup_{H \in \Bscr} L_n(H) - \E L_n(H) \leq K  \left(\frac{\mathbb{E}\|g(y)\|^2}{n}\right)^{1/2}.
\end{equation}
It also holds that 
\begin{align}
\nonumber \E \inf_{H \in \Bscr} L_n(H) - \E L_n(H) & \geq 2 \E \inf_{H \in \Bscr} \frac{1}{n} \sum_{j=1}^n \epsilon_j \max_{t \in [K]} \ip{h_t, g(y_j)} \\
\nonumber  &= -2\E \sup_{H \in \Bscr} \frac{1}{n}\sum_{j=1}^n (-\epsilon_j) \max_{t \in [K]} \ip{h_t, g(y_j)} \\
\nonumber & = -2\E \sup_{H \in \Bscr} \frac{1}{n} \sum_{j=1}^n \epsilon_j \max_{t \in [K]} \ip{h_t, g(y_j)} \\
 & \geq - 2 K \left(\frac{\mathbb{E}\|g(y)\|^2}{n}\right)^{1/2}, \label{eq: negative}
\end{align}
where the first inequality holds by a symmetrization analogous to \eqref{eq: rademacher}; the second by algebraic manipulation; the third because $\epsilon_1,\ldots,\epsilon_n$ are $\pm 1$ with probability $1/2$; the fourth by \eqref{eq: induction} and \eqref{eq: base case}.

Combining \eqref{eq: positive} and \eqref{eq: negative} proves the lemma.
\end{proof}

\section{Proof of Theorem 2} 
\label{sec: th2 proof}

\subsection*{Preliminaries}

Let $\Dscr = \{c \in [0,1)^d: \|c\|\leq 1\}$, $\Sscr = [K] \times \Dscr,  \Tscr = [K] \times \Dscr$, and $\Theta = [0,1]^{K \times K}$. Let $\bar{\Dscr}$ denote the smallest $\epsilon$-cover in 2-norm of $\Dscr$. Let $\bar{\Sscr} = [K] \times \bar{\Dscr}$ and let $\bar{\Tscr} = [K] \times \bar{\Dscr}$. Let $\bar{K} = |\bar{\Sscr}| = |\bar{\Tscr}| \leq K (\sqrt{d}\epsilon^{-1})^d$. 

As described in Section \ref{sec: th2}, recall that we may write $S, T, \sigma$ and $\tau$ as $S = (U,B), T=(V,D), \sigma = (\mu,\beta)$, and $\tau = (\nu,\delta)$. Given $S = (U,B)$, let $\bar{S}$ denote its closest approximation in $\bar{\Sscr}^m$. This means that $\bar{S} = (U, \bar{B})$, with $\bar{B} \in \bar{\Dscr}^m$ satisfying $\bar{B}_i = \arg \min_{c\, \in \bar{\Dscr}} \|B_i - c\|$ for $i \in [m]$. Similarly, given $T = (V,D)$ or $\tau = (\nu,\delta)$, let $\bar{T} = (V, \bar{D})$ or $\bar{\tau} = (\nu, \bar{\delta})$ be defined analogously.


Let $Z \in [0,1]^{m \times n}$ be defined by
\[ Z_{ij} = \bar{B}_i^T \bar{D}_j W_{ij},\]
and let $\Phi_Z(U,V)$ be defined by
\[ [\Phi_Z(U,V)]_{uv} = \frac{1}{mn} \sum_{i=1}^m \sum_{j=1}^n Z_{ij} 1\{U_i=u, V_j=v\}.\]
Let $\Phi_\zeta(U, \nu)$ and $\Phi_\zeta(\mu, \nu)$ denote population versions of $\Phi_Z$, defined by
\begin{align*}
[\Phi_\zeta(U, \nu)]_{uv} &= \frac{1}{m} \sum_{i=1}^m \int_\Yscr \bar{B}_i^T \delta(y) \omega(x_i, y) 1\{U_i=u, \nu(y)=v\}\, dy, \\
[\Phi_\zeta(\mu, \nu)]_{uv} &= \int_{\Xscr \times \Yscr} \bar{\beta}(x)^T \bar{\delta}(y)\, \omega(x,y) 1\{\mu(x)=u, \nu(y)=v\}\, dx\,dy.
\end{align*}
Let $\pi_{U=k}^{\bar{B}}, \pi_{V=k}^{\bar{D}}, \pi_{\mu=k}^{\bar{\beta}}$, and $\pi_{\nu=k}^{\bar{\delta}}$ be defined for $k\in [K]$ as
\begin{align*}
 \pi^{\bar{B}}_{U=k} & = \frac{1}{m} \sum_{i=1}^m \bar{B}_i\bar{B}^T_i 1\{U_i=k\} &  \pi^{\bar{D}}_{V=k} & = \frac{1}{n} \sum_{j=1}^n \bar{D}_j\bar{D}^T_j 1\{V_j =k\} \\
\pi^{\bar{\beta}}_{\mu=k} & = \int_{\Xscr} \bar{\beta}(x)\bar{\beta}(x)^T 1\{\mu(x)=k\}\, dx & \pi^{\bar{\delta}}_{\nu=k} & = \int_{\Yscr} \bar{\delta}(y)\bar{\delta}(y)^T 1\{\nu(y)=k\}\, dy. 
\end{align*}
We observe that $\sum_{k=1}^K \|\pi_{U=k}^{\bar{B}}\|_F \leq 1$, since by triangle inequality,
\begin{align*}
	\sum_{k=1}^K \|\pi_{U=k}^{\bar{B}}\|_F &\leq \frac{1}{m} \sum_{i=1}^m \|\bar{B}_i\bar{B}_i^T\|_F \\
	& \leq 1,
\end{align*}
where we have used $\|\bar{B}_i\| \leq 1$ for all $\bar{B}_i \in \Dscr$.

Recall the definitions for $g_y \in [0,1]^m$ and $f_x:\Yscr \mapsto [0,1]$:
\[ g_y(i) = \omega(x_i,y) \qquad \text{and} \qquad f_x(y) = \omega(x,y).\]
Define for $k \in [K]$ the matrices $g^{\bar{D}}_{V=k}$ and $g^{\bar{\delta}}_{\nu=k}$ in $[0,1]^{m \times d}$, and the functions $f^{\bar{B}}_{U=k}$ and $f^{\bar{\beta}}_{\mu=k}$ mapping $\Yscr \mapsto \Dscr$:
\begin{align*}
g^{\bar{D}}_{V=k} & = \frac{1}{n} \sum_{j=1}^n g_{y_j} \bar{D}_j^T 1\{V_j=k\}  &  g^{\bar{\delta}}_{\nu=k} & = \int_\Yscr g_y \bar{\delta}(y)^T 1\{\nu(y) = k\}\, dy \\
f^{\bar{B}}_{U=k} & = \frac{1}{m} \sum_{i=1}^m f_{x_i} \bar{B}_i 1\{U_i = k\} & f^{\bar{\beta}}_{\mu=k} & = \int_\Xscr f_x \bar{\beta}(x) 1\{\mu(x)=k\}\, dx.
\end{align*}
Define the matrix $1_{U=u}^{\bar{B}} \in [0,1]^{m \times d}$ and function $1_{\nu=v}^{\bar{\delta}}: \Yscr\mapsto \Dscr$
\begin{align*}
1_{U=u}^{\bar{B}}(i,j) & = \begin{cases} \bar{B}_i(j) & \text{if } U_i=u \\ 0 & \text{otherwise}\end{cases} &
1_{\nu=v}^{\bar{\delta}}(y) & = \begin{cases} \bar{\delta}(y) & \text{if } \nu(y) = v \\ 0 & \text{otherwise.} \end{cases}
\end{align*}
We observe that $m^{-1} \sum_{k=1}^K \|1_{U=k}^{\bar{B}}\|^2 \leq 1$ since $\|\bar{B}_i\|^2 \leq 1$ for all $\bar{B}_i \in \Dscr$. Analogous to $G_T, G_\tau, F_S, F_\sigma$ as defined in Section \ref{sec: proof sketch}, let $G_V^{\bar{D}}, G_\nu^{\bar{\delta}}, F_U^{\bar{B}},$ and $F_\mu^{\bar{\beta}}$ be defined by:
\begin{align*}
G_V^{\bar{D}} & = \left(\frac{g^{\bar{D}}_{V=1}}{\sqrt{m}}, \ldots, \frac{g^{\bar{D}}_{V=K}}{\sqrt{m}}, \pi^{\bar{D}}_{V=1}, \ldots, \pi^{\bar{D}}_{V=K}, \frac{\Psi_{\bar{T}}}{Kd} \right) & G_\nu^{\bar{\delta}} & = \left(\frac{g^{\bar{\delta}}_{\nu=1}}{\sqrt{m}},\ldots, \frac{g^{\bar{\delta}}_{\nu=K}}{\sqrt{m}}, \pi^{\bar{\delta}}_{\nu=1}, \ldots, \pi^{\bar{\delta}}_{\nu=K}, \frac{\Psi_{\bar{\tau}}}{Kd} \right) \\
F_U^{\bar{B}} & = \left(f^{\bar{B}}_{U=1}, \ldots, f^{\bar{B}}_{U=K}, \pi^{\bar{B}}_{U=1}, \ldots, \pi^{\bar{B}}_{U=K}, \frac{\Psi_{\bar{S}}}{Kd} \right) & F_\mu^{\bar{\beta}} & = \left(f^{\bar{\beta}}_{\mu=1},\ldots, f^{\bar{\beta}}_{\mu=K}, \pi^{\bar{\beta}}_{\mu=1}, \ldots, \pi^{\bar{\beta}}_{\mu=K}, \frac{\Psi_{\bar{\sigma}}}{Kd} \right).
\end{align*}
Define the sets $\bar{\Fscr}_m, \bar{\Gscr}_n, \bar{\Fscr},$ and $\bar{\Gscr}$ by
\begin{align*}
\bar{\Fscr}_n & = \{F_U^{\bar{B}}: \bar{S} = (U,\bar{B}) \in \bar{\Sscr}^m\} & \bar{\Fscr} & = \{F_{\mu}^{\bar{\beta}}: \bar{\sigma} = (\mu, \bar{\beta}) \in \Xscr \mapsto \bar{\Sscr}\} \\
\bar{\Gscr}_n & = \{G_V^{\bar{D}}: \bar{T} = (V,\bar{D}) \in \bar{\Tscr}^n\} & \bar{\Gscr} & = \{G_{\nu}^{\bar{\delta}}: \bar{\tau} = (\nu, \bar{\delta}) \in \Yscr \mapsto \bar{\Tscr}\} .
\end{align*}

\subsection{Intermediate Results for Proof of Theorem \ref{th: generalized}}


Lemmas \ref{le: th2 le2} and \ref{le: th2 le3} are analogs to Lemmas \ref{le: supporting hyperplanes} and \ref{le: convex}.

\begin{lemma}\label{le: th2 le2}
Under the conditions of Theorem 2, 
\begin{align}
\sup_{\|H\|=1} \left| \Gamma_{\bar{\Gscr}_n}(H) - \Gamma_{\bar{\Gscr}}(H)\right| & \leq O_P(\bar{K} (\log n) n^{-1/2}) \\
\sup_{\|H\|=1} \left| \Gamma_{\bar{\Fscr}_m}(H) - \Gamma_{\bar{\Fscr}}(H) \right| & \leq O_P(\bar{K} (\log m) m^{-1/2}), 
\end{align}
which implies
\begin{align*}
d_{\textrm{Haus}}(\operatorname{conv}(\bar{\Gscr}_n), \operatorname{conv}(\bar{\Gscr})) &\leq O_P(\bar{K}(\log n)n^{-1/2}) \\
d_{\textrm{Haus}}(\operatorname{conv}(\bar{\Fscr}_m), \operatorname{conv}(\bar{\Fscr})) &\leq O_P(\bar{K}(\log m)m^{-1/2}).
\end{align*}
\end{lemma}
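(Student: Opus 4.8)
The plan is to mirror the proof of Lemma~\ref{le: supporting hyperplanes} almost verbatim, with $K$ replaced by $\bar K = |\bar{\Sscr}| = |\bar{\Tscr}|$ and with the matrix-valued and CDF-valued blocks of the stacked vectors folded into the Hilbert-space arguments. First I would rewrite each support function as an empirical average of a maximum over the $\bar K$ ``super-labels'' $(v,c) \in [K] \times \bar{\Dscr}$. Expanding the definitions of $g^{\bar D}_{V=k}$, $\pi^{\bar D}_{V=k}$ and $\Psi_{\bar T}$ as averages over the columns $j \in [n]$, and using $\ip{h,\, (g_{y_j}/\sqrt m)\,c^T} = \ip{hc,\, g_{y_j}}/\sqrt m$ for the first block and $\ip{r, \Psi_{\bar T}}/(Kd) = \tfrac1n \sum_j \phi_r(V_j, \bar D_j)$ for the last block (where $\phi_r(v,c) = \sum_{k' \ge v} \int_{c' \ge c} r(k',c')\, dc'$ is linear in $r$), one obtains for $H = (h_1,\ldots,h_K,p_1,\ldots,p_K,r)$ with $\|H\|=1$ that
\[ \ip{H, G_V^{\bar D}} = \frac1n \sum_{j=1}^n \Lambda_H(y_j, V_j, \bar D_j), \qquad \Lambda_H(y,v,c) = \frac{\ip{h_v c,\, g_y}}{\sqrt m} + \ip{p_v,\, c c^T} + \frac{\phi_r(v,c)}{Kd}. \]
Since the pairs $(V_j,\bar D_j)$ may be chosen independently over $j$, and $\bar\tau = (\nu,\bar\delta)$ may be chosen pointwise in $y$, this yields $\Gamma_{\bar\Gscr_n}(H) = \tfrac1n\sum_j \max_{(v,c)} \Lambda_H(y_j,v,c)$ and $\Gamma_{\bar\Gscr}(H) = \int_\Yscr \max_{(v,c)} \Lambda_H(y,v,c)\, dy$, hence $\E\,\Gamma_{\bar\Gscr_n}(H) = \Gamma_{\bar\Gscr}(H)$; the corresponding statements for $\bar\Fscr_m$ and $\bar\Fscr$ follow identically over the rows $x_1,\ldots,x_m$.

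Second, I would put $\Lambda_H$ into the shape required by Lemma~\ref{le: Biau}. Appending a constant coordinate, set $\tilde g(y) = (g_y/\sqrt m,\, 1)$ and $\tilde h_{(v,c)} = (h_v c,\ \ip{p_v, c c^T} + \phi_r(v,c)/(Kd))$, so that $\Lambda_H(y,v,c) = \ip{\tilde h_{(v,c)}, \tilde g(y)}$ and the index set has size $\bar K$. The normalization of the $\Psi$-block by $Kd$ is exactly what keeps the constants bounded: since $\|c\| \le 1$, $\|H\|=1$, and $|\phi_r(v,c)| \le \sqrt K\,\|r\|$ (the region $\{(k',c') : k' \ge v,\ c' \ge c\}$ has measure at most $K$), one gets $\|\tilde h_{(v,c)}\| \le C$ for a universal $C$, while $\|\tilde g(y)\|^2 \le 2$ because $\|g_y\|^2 = \sum_i \omega(x_i,y)^2 \le m$. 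A rescaled application of Lemma~\ref{le: Biau} then gives $\E\sup_{\|H\|=1}|\Gamma_{\bar\Gscr_n}(H) - \Gamma_{\bar\Gscr}(H)| \le O(\bar K n^{-1/2})$, and likewise $O(\bar K m^{-1/2})$ for $\bar\Fscr$.

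Third, I would promote this to a high-probability bound via McDiarmid, exactly as in Lemma~\ref{le: supporting hyperplanes}. Because $|\Lambda_H(y,v,c)| \le \|\tilde h_{(v,c)}\|\,\|\tilde g(y)\|$ is bounded by a universal constant uniformly over $\|H\|=1$, replacing one $y_\ell$ perturbs $Z(y_1,\ldots,y_n) = \sup_{\|H\|=1}|\Gamma_{\bar\Gscr_n}(H) - \Gamma_{\bar\Gscr}(H)|$ by at most $O(1/n)$ (crucially, no factor of $\bar K$), so taking deviation scale $n^{-1/2}\log n$ gives $Z - \E Z = O_P(n^{-1/2}\log n)$ and hence $\sup_{\|H\|=1}|\Gamma_{\bar\Gscr_n} - \Gamma_{\bar\Gscr}| = O_P(\bar K n^{-1/2}\log n)$; the $\bar\Fscr$ bound is the same with $m$ in place of $n$. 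The Hausdorff statements are then immediate from \eqref{eq: S2 eq1}, which identifies $d_{\textrm{Haus}}(\operatorname{conv}(\Bscr_1), \operatorname{conv}(\Bscr_2))$ with $\sup_{\|H\|=1}|\Gamma_{\Bscr_1}(H) - \Gamma_{\Bscr_2}(H)|$.

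The new content relative to Lemma~\ref{le: supporting hyperplanes} is purely bookkeeping, and I do not expect a serious obstacle. The only points needing care are: (i) verifying the ``maximum over super-labels'' representation once the stacked vectors carry $d \times d$ matrix blocks and a CDF block; (ii) bounding the functional $\phi_r$ and checking that the $1/(Kd)$ scaling prevents any hidden $K$ or $d$ from entering $\|\tilde h_{(v,c)}\|$; and (iii) confirming that the McDiarmid bounded-difference constant is $O(1/n)$ rather than $O(\bar K/n)$, so that $\bar K$ enters only through the Rademacher term. Conditioning on $\{x_i\}$ for the $\bar\Gscr$ statements and on $\{y_j\}$ for the $\bar\Fscr$ statements, as in the original lemma, makes all the expectation identities above exact.
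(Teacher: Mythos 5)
Your proposal matches the paper's argument: both rewrite $\Gamma_{\bar{\Gscr}_n}$ and $\Gamma_{\bar{\Gscr}}$ as empirical and population averages of a maximum over the $\bar K$ super-labels $(v,c) \in [K]\times\bar{\Dscr}$, fold the three blocks of $H$ into a stacked vector whose norm is $O(1)$ for $\|H\|=1$ (the paper keeps the two scalar contributions as separate coordinates paired with two $1$-coordinates in $\tilde g$, but this is cosmetically identical to your merged form), apply Lemma~\ref{le: Biau}, and finish with McDiarmid's inequality with an $O(1/n)$ bounded-difference constant. The bound $|\ip{\Psi_H, 1_{v,c}}| \le \sqrt{K}\|\Psi_H\|$ and the $1/(Kd)$ scaling you flag as the crux appear in the paper exactly as you describe, so the proposal is correct and takes essentially the same route.
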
 

\begin{lemma}\label{le: th2 le3}
It holds that
\begin{align}
d_\textrm{Haus}(\operatorname{conv}(\bar{\Gscr}), \bar{\Gscr}) &= 0 \\
d_\textrm{Haus}(\operatorname{conv}(\bar{\Fscr}), \bar{\Fscr}) &= 0.
\end{align}
\end{lemma}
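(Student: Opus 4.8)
The plan is to prove Lemma~\ref{le: th2 le3} by the same two-part strategy used for Lemma~\ref{le: convex}, separating the assertion $d_{\textrm{Haus}}(\operatorname{conv}(\bar{\Gscr}), \bar{\Gscr}) = 0$, which takes place in a finite-dimensional space, from $d_{\textrm{Haus}}(\operatorname{conv}(\bar{\Fscr}), \bar{\Fscr}) = 0$, which does not. In each case it suffices to show that every element of the convex hull is a limit (in norm) of elements of the set itself.

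For $\bar{\Gscr}$, first note that it is contained in a fixed finite-dimensional space: the blocks $g^{\bar{\delta}}_{\nu=k}$ lie in $[0,1]^{m\times d}$, the blocks $\pi^{\bar{\delta}}_{\nu=k}$ lie in $[0,1]^{d\times d}$, and $\Psi_{\bar{\tau}}$ — since $\bar\delta$ takes values in the finite set $\bar{\Dscr}$ — is a fixed linear image of the vector of measures $\big(\int_\Yscr 1\{\nu(y)=k, \bar\delta(y)=c\}\,dy\big)_{(k,c)\in[K]\times\bar\Dscr}$, hence lies in a $\bar K$-dimensional subspace. So Theorem~\ref{th: caratheodory} applies and each $G^* \in \operatorname{conv}(\bar\Gscr)$ is a finite convex combination $\sum_i \eta_i G_{\nu_i}^{\bar\delta_i}$; collecting composite labels yields a map $\mu:\Yscr\mapsto\mathbb{R}^{[K]\times\bar\Dscr}$ taking values in the simplex, with $G^*$ equal to the corresponding $\mu$-averages of the relevant integrands. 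I would then mimic the construction in the proof of Lemma~\ref{le: convex eq1}: take a finite $\epsilon$-net $\Bscr$ of $\{g_y : y\in\Yscr\}\subset[0,1]^m$, partition $\Yscr$ into the cells on which $g_y$ is closest to a given net point, subdivide each cell into $\bar K$ subregions whose measures equal the integrals of the $\mu_{(k,c)}$ over that cell, and define $\bar\tau = (\nu,\bar\delta)$ to assign composite label $(k,c)$ on the corresponding subregion. By construction the $\pi^{\bar\delta}_{\nu=k}$ and $\Psi_{\bar\tau}$ coordinates of $G_\nu^{\bar\delta}$ match those of $G^*$ exactly, since both depend only on the measures of the composite-label sets, which have been matched; and for the $g^{\bar\delta}_{\nu=k}$ coordinates the net-point contributions cancel exactly as in Lemma~\ref{le: convex eq1}, leaving an error of order $\epsilon$ times a constant depending only on $K$ and $d$ (using $\|c\|\le 1$ for $c\in\bar\Dscr$). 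Letting $\epsilon\to 0$ gives the claim for $\bar\Gscr$.

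For $\bar{\Fscr}$ the blocks $f^{\bar\beta}_{\mu=k}$ are $\Dscr$-valued functions on $\Yscr$ and the set is infinite dimensional, so I would reuse the truncation device from the proof of Lemma~\ref{le: convex eq2}. Using the singular value decomposition of $\omega$, replace $\omega$ by the rank-$Q$, amplitude-$D$-truncated kernel $\hat\omega(x,y)=\sum_{q=1}^Q \lambda_q \hat u_q^D(x) v_q(y)$, and define $\hat f_x$, $\hat f^{\bar\beta}_{\mu=k}=\int_\Xscr \hat f_x\,\bar\beta(x) 1\{\mu(x)=k\}\,dx$, $\hat F_\mu^{\bar\beta}$, and $\hat{\bar\Fscr}$ accordingly. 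Since $\|\bar\beta(x)\|\le 1$, the bound $\int_\Xscr\|\hat f_x - f_x\|^2\,dx\to 0$ as $\min(Q,D)\to\infty$ (already established in the proof of Lemma~\ref{le: convex eq2}) gives $\sup_\sigma\|\hat F_\mu^{\bar\beta} - F_\mu^{\bar\beta}\|\le\epsilon$, hence $d_{\textrm{Haus}}(\operatorname{conv}(\bar\Fscr),\operatorname{conv}(\hat{\bar\Fscr}))\le\epsilon$. Because each $\hat f_x$ lies in the span of $v_1,\dots,v_Q$ with coefficients in $[-D,D]$, the set $\hat{\bar\Fscr}$ is finite dimensional and bounded, so properties P1 and P2 from the proof of Lemma~\ref{le: convex eq1} hold for it and the argument of the previous paragraph produces, for any $\hat F^*\in\operatorname{conv}(\hat{\bar\Fscr})$, a mapping $\bar\sigma=(\mu,\bar\beta)$ with $\|\hat F_\mu^{\bar\beta} - \hat F^*\|$ arbitrarily small. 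Chaining the three estimates by the triangle inequality, exactly as at the end of the proof of Lemma~\ref{le: convex eq2}, shows every $F^*\in\operatorname{conv}(\bar\Fscr)$ is a limit of elements of $\bar\Fscr$.

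I expect the only real difficulty to be bookkeeping: carrying the additional matrix-valued coordinates ($\pi^{\bar\delta}_{\nu=k}$ and the $\bar\delta$-weighted blocks $g^{\bar\delta}_{\nu=k}$) and the CDF coordinate $\Psi_{\bar\tau}$ through the net-and-subdivide construction. The conceptual content is unchanged from Lemma~\ref{le: convex}, because the $\pi$ and $\Psi$ coordinates depend only on the measures of the composite-label sets and are matched exactly by the construction, so the entire approximation error is confined to the $g$/$f$ coordinates and is controlled by the $\epsilon$-net as before; replacing the label set $[K]$ by the finite composite set $[K]\times\bar\Dscr$ and inserting the weights $\bar\delta,\bar\beta$ (of operator norm at most $1$) only multiplies the error by a constant depending on $K$ and $d$.
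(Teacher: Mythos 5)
Your proposal is correct, but it takes a substantially longer route than the paper's. The paper proves Lemma~\ref{le: th2 le3} in a single short paragraph by observing that $G_V^{\bar D}$ is a fixed linear transformation of the ordinary blockmodel vector $G_{\bar T}$ obtained by viewing $\bar T=(V,\bar D)$ as a labelling into $\bar K=|\bar\Tscr|$ classes: each of $g^{\bar D}_{V=k}$, $\pi^{\bar D}_{V=k}$, and $\Psi_{\bar T}$ is an explicit linear combination of the $g_{\bar T = t(k,c)}$ and $\pi_{\bar T}(t(k,c))$ coordinates. Thus $\bar\Gscr$ is a linear (continuous) image of the set $\Gscr$ from Theorem~\ref{th: co-blockmodel} with $\bar K$ classes, and since Lemma~\ref{le: convex} shows $\operatorname{conv}(\Gscr)\subset\overline\Gscr$ and linear maps commute with convex hulls and preserve density, the conclusion for $\bar\Gscr$ follows immediately; the argument for $\bar\Fscr$ is identical. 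Your proof instead re-derives the result from scratch: Carath\'eodory plus an $\epsilon$-net-and-subdivide construction for $\bar\Gscr$, then a rank-$Q$/amplitude-$D$ truncation plus the same construction for $\bar\Fscr$. Your argument does go through --- the $\pi^{\bar\delta}_{\nu=k}$ and $\Psi_{\bar\tau}$ coordinates indeed depend only on the measures of the composite label sets and are matched exactly, the $\bar\delta(y)$ weight takes only finitely many values in $\bar\Dscr$ with $\|c\|\le 1$ so the cancellation survives multiplication by $c^T$, and $\|\bar\beta(x)\|\le 1$ lets the truncation bound transfer --- but it duplicates all the work of Lemmas~\ref{le: convex eq1} and~\ref{le: convex eq2} rather than invoking them. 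What the paper's approach buys is economy: by noticing that the degree-corrected objects are linear images of plain blockmodel objects (over an enlarged label set), it reduces a messy bookkeeping exercise to a one-line invocation of the already-proved lemma. Your approach is more self-contained and does not rely on spotting that linear-image reduction, which may be preferable for a reader who does not want to re-parse the definitions; the trade-off is considerable extra length and opportunity for error in the bookkeeping you yourself flag.
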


Lemmas \ref{le: th2 aux1} - \ref{le: th2 aux3} bound various error terms that appear in the proof of Theorem \ref{th: generalized}. They bound on the approximation error that arises when substituting $(\bar{S},\bar{T})$, and also the differences $|R_A(\bar{S}, \bar{T};\theta) - R_W(\bar{S}, \bar{T};\theta)|, |R_W(\bar{S}, \bar{T}; \theta) - R_\omega(\bar{S}, \bar{\tau};\theta)|$ and $|R_\omega(\bar{S}, \bar{\tau};\theta) - R_\omega(\bar{\sigma}, \bar{\tau}; \theta)|$.

\begin{lemma}\label{le: th2 aux1}
It holds that 
\begin{align} \label{eq: th2 proof1}
|R_A(S,T;\theta) - R_A(\bar{S}, \bar{T};\theta)| & \leq 12 \epsilon\\
\nonumber |R_\omega(S,\tau;\theta) - R_\omega(\bar{S}, \tau;\theta)| & \leq 12\epsilon\\
\nonumber |R_\omega(\sigma, \tau;\theta) - R_\omega(\sigma, \bar{\tau}; \theta)| &\leq 12 \epsilon.
\end{align}
and that
\begin{align} \label{eq: th2 proof1a} 
\|\Psi_S - \Psi_{\bar{S}}\|^2 & \leq Kd\epsilon & 
\|\Psi_T - \Psi_{\bar{T}}\|^2 & \leq Kd\epsilon \\
\nonumber \|\Psi_\sigma - \Psi_{\bar{\sigma}}\|^2 & \leq Kd\epsilon & 
\|\Psi_\tau - \Psi_{\bar{\tau}}\|^2 & \leq Kd\epsilon.
\end{align}
\end{lemma}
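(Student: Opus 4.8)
The plan is to exploit that every model of the form \eqref{eq: model} is bilinear in its $\Dscr$-valued coordinates, that those coordinates always have $2$-norm at most $1$, and that by the definition of $\bar\Dscr$ we have $\|B_i-\bar B_i\|\le\epsilon$, $\|D_j-\bar D_j\|\le\epsilon$, $\|\beta(x)-\bar\beta(x)\|\le\epsilon$ and $\|\delta(y)-\bar\delta(y)\|\le\epsilon$. The basic ingredient is the pointwise estimate $|\omega_\theta(S_i,T_j)-\omega_\theta(\bar S_i,\bar T_j)|\le 3\epsilon$, obtained by writing $B_i^\top D_j-\bar B_i^\top\bar D_j=(B_i-\bar B_i)^\top(D_j-\bar D_j)+(B_i-\bar B_i)^\top\bar D_j+\bar B_i^\top(D_j-\bar D_j)$, bounding the three terms by $\epsilon^2$, $\epsilon$ and $\epsilon$ (using $\|\bar B_i\|,\|\bar D_j\|\le 1$), and multiplying by $\theta_{U_iV_j}\in[0,1]$; when only one latent coordinate is replaced by its rounding, the same computation gives the sharper $|\omega_\theta(S_i,\tau(y))-\omega_\theta(\bar S_i,\tau(y))|\le\epsilon$ and $|\omega_\theta(\sigma(x),\tau(y))-\omega_\theta(\sigma(x),\bar\tau(y))|\le\epsilon$.

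For the three risk inequalities in \eqref{eq: th2 proof1} I would apply $a^2-b^2=(a-b)(a+b)$ inside each summand (resp.\ integrand) of $R_A$ and $R_\omega$. For $R_A$, $(A_{ij}-\omega_\theta(S_i,T_j))^2-(A_{ij}-\omega_\theta(\bar S_i,\bar T_j))^2$ has absolute value at most $3\epsilon\cdot|2A_{ij}-\omega_\theta(S_i,T_j)-\omega_\theta(\bar S_i,\bar T_j)|\le 3\epsilon\cdot 4=12\epsilon$, the last factor bounded by the triangle inequality since $A_{ij}\in\{0,1\}$ and both $\omega_\theta$ values lie in $[0,1]$; averaging over $(i,j)$ preserves $12\epsilon$. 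The bounds for $|R_\omega(S,\tau;\theta)-R_\omega(\bar S,\tau;\theta)|$ and $|R_\omega(\sigma,\tau;\theta)-R_\omega(\sigma,\bar\tau;\theta)|$ are identical, with $|\omega(x,y)|\le1$ in place of $|A_{ij}|\le1$ and the pointwise discrepancy now only $\epsilon$.

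For \eqref{eq: th2 proof1a} the key point is that the region where $1\{B_i\le c\}$ and $1\{\bar B_i\le c\}$ disagree is small. Telescoping the product of coordinatewise indicators gives $|1\{B_i\le c\}-1\{\bar B_i\le c\}|\le\sum_{\ell=1}^d 1\{c^{(\ell)}\text{ lies between }B_i^{(\ell)}\text{ and }\bar B_i^{(\ell)}\}$, which confines the disagreement set to a union of $d$ coordinate slabs, the $\ell$th of Lebesgue measure at most $|B_i^{(\ell)}-\bar B_i^{(\ell)}|\le\epsilon$; hence $\int_{[0,1)^d}(1\{B_i\le c\}-1\{\bar B_i\le c\})^2\,dc=\int_{[0,1)^d}|1\{B_i\le c\}-1\{\bar B_i\le c\}|\,dc\le d\epsilon$ (the squared indicator difference being $\{0,1\}$-valued). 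Since $\Psi_S(k,c)-\Psi_{\bar S}(k,c)=\tfrac1m\sum_i 1\{U_i\le k\}\bigl(1\{B_i\le c\}-1\{\bar B_i\le c\}\bigr)$, Cauchy--Schwarz with the nonnegative weights $\tfrac1m 1\{U_i\le k\}$ (whose sum is at most $1$), followed by integration in $c$ and summation over $k$ using $\sum_{k=1}^K\tfrac1m\sum_i 1\{U_i\le k\}=\tfrac1m\sum_i(K-U_i+1)\le K$, yields $\|\Psi_S-\Psi_{\bar S}\|^2\le Kd\epsilon$. The remaining three CDF bounds follow verbatim, replacing $\tfrac1m\sum_i$ by the integral over $\Xscr$ or $\Yscr$ and using Fubini to exchange the $c$- and $x$- (or $y$-) integrations.

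I do not expect a genuine obstacle; the argument is essentially bookkeeping. The one step that needs a little care is the $d$-dimensional indicator comparison in \eqref{eq: th2 proof1a}: one must telescope over the $d$ coordinates rather than treat $1\{B_i\le c\}$ as a single jump, and one must check that the $\tfrac1m\sum_i$ averaging does not produce a spurious factor of $m$ --- which is precisely what the Cauchy--Schwarz step with normalized weights, together with the idempotence of $\{0,1\}$-valued functions under squaring, handles.
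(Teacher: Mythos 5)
Your proof is correct and follows essentially the same route as the paper's: bound the pointwise change in $\omega_\theta$ via bilinearity of $b^Td\,\theta_{uv}$ together with $\|B_i-\bar B_i\|,\|D_j-\bar D_j\|\le\epsilon$ and $\|\bar B_i\|,\|\bar D_j\|,\theta_{uv}\le1$, then average; and for the CDF bounds, push the square inside the empirical average, integrate in $c$, and sum over $k$. The only cosmetic differences are that you use Cauchy--Schwarz with the weights $\tfrac1m 1\{U_i\le k\}$ where the paper applies Jensen with uniform weights $\tfrac1m$ (these give the identical intermediate bound $\tfrac1m\sum_i 1\{U_i\le k\}\,b_i(c)^2$ since the indicator is idempotent), and that you make explicit the coordinate-wise telescoping argument $\int_{[0,1)^d}|1\{B_i\le c\}-1\{\bar B_i\le c\}|\,dc\le d\epsilon$, which the paper compresses to a one-line remark.
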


\begin{lemma}\label{le: th2 aux2}
If $\bar{K} \leq n^{1/2}$, it holds that
\begin{align}
|R_A(\bar{S}, \bar{T};\theta) - R_W(\bar{S}, \bar{T}; \theta) - C_1| & \leq 2 \bar{K} O_P(\bar{K}(\log n)(n^{-1}) \label{eq: th2 proof2},
\end{align}
\end{lemma}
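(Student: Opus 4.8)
The plan is to expand the squares defining $R_A$ and $R_W$, peel off the one contribution that actually varies with $(S,T,\theta)$, and then control it uniformly by re-running Lemma~\ref{le: mcdiarmid} with the finite alphabets $\bar{\Sscr},\bar{\Tscr}$ in place of $[K]$. Using $A_{ij}^2=A_{ij}$ and $W_{ij}\in[0,1]$, and noting that the two copies of $\omega_\theta(\bar S_i,\bar T_j)^2$ cancel, a short computation gives
\[
R_A(\bar S,\bar T;\theta)-R_W(\bar S,\bar T;\theta)
= \frac{1}{nm}\sum_{i,j}(A_{ij}-W_{ij})
+\frac{1}{nm}\sum_{i,j}W_{ij}(1-W_{ij})
-\frac{2}{nm}\sum_{i,j}(A_{ij}-W_{ij})\,\omega_\theta(\bar S_i,\bar T_j).
\]
The middle sum does not depend on $(S,T,\theta)$, so I would take it to be $C_1$; the first sum is a conditionally centered average of $nm$ bounded terms, hence $O_P((nm)^{-1/2})=O_P(n^{-1})$, which is of lower order than the target and may be absorbed into the error. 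Everything thus reduces to bounding $\sup_{\bar S,\bar T,\theta}\bigl|\frac{1}{nm}\sum_{i,j}(A_{ij}-W_{ij})\,\omega_\theta(\bar S_i,\bar T_j)\bigr|$.

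Since $\bar S_i\in\bar{\Sscr}$ and $\bar T_j\in\bar{\Tscr}$ each take only $\bar K$ values, the summand depends on $(i,j)$ solely through the label pair $(\bar S_i,\bar T_j)$, so grouping by these $\bar K^2$ cells turns the average into $\sum_{a\in\bar{\Sscr},\,c\in\bar{\Tscr}}\omega_\theta(a,c)\bigl([\Phi_A(\bar S,\bar T)]_{ac}-[\Phi_W(\bar S,\bar T)]_{ac}\bigr)$, with $\Phi_A,\Phi_W$ now the $\bar K\times\bar K$ block-sum matrices over $\bar{\Sscr}\times\bar{\Tscr}$. Because $\omega_\theta(a,c)$ has the form $b^\top d\,\theta_{uv}$ with $\|b\|,\|d\|\le 1$ and $\theta_{uv}\in[0,1]$ for the model \eqref{eq: model}, and so lies in $[0,1]$, the dependence on $\theta$ drops out cheaply: the absolute value is at most $\sum_{a,c}\bigl|[\Phi_A(\bar S,\bar T)-\Phi_W(\bar S,\bar T)]_{ac}\bigr|\le \bar K\,\|\Phi_A(\bar S,\bar T)-\Phi_W(\bar S,\bar T)\|$ by Cauchy--Schwarz over the $\bar K^2$ entries. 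Hence, uniformly in $\theta$,
\[
\bigl|R_A(\bar S,\bar T;\theta)-R_W(\bar S,\bar T;\theta)-C_1\bigr|
\le 2\bar K\,\max_{\bar S,\bar T}\|\Phi_A(\bar S,\bar T)-\Phi_W(\bar S,\bar T)\| + O_P(n^{-1}).
\]

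It then remains to bound $\max_{\bar S,\bar T}\|\Phi_A(\bar S,\bar T)-\Phi_W(\bar S,\bar T)\|$, and I would do this by invoking Lemma~\ref{le: mcdiarmid} verbatim with $\bar{\Sscr},\bar{\Tscr}$ (equivalently, $\bar K$ classes) replacing $[K]$: its hypothesis ``$K\le n^{1/2}$'' becomes the assumed ``$\bar K\le n^{1/2}$'', and the Hoeffding-plus-union-bound proof of that lemma goes through unchanged, the union now running over $\bar K^{\,n+m}(nm)^{\bar K^2}$ configurations, which still collapses once $\bar K^2\le n$. Feeding the resulting bound into the last display produces \eqref{eq: th2 proof2}, with the factor $2\bar K$ exactly the one pulled out by Cauchy--Schwarz.

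The step I expect to be the main obstacle is this last one: because $\bar K=|\bar{\Sscr}|$ is tied to the cover parameter $\epsilon$ (which is later optimized) rather than fixed, one cannot cite Lemma~\ref{le: mcdiarmid} as a black box for a fixed number of classes but must verify it uniformly in the regime $\bar K\le n^{1/2}$ --- i.e.\ that a threshold $\epsilon$ of the stated order still dominates the entropy term $(n+m)\log\bar K+\bar K^2\log(nm)$ in that lemma's union bound. This is precisely where $\bar K\le n^{1/2}$ is consumed, and it is the source of the extra factor of $\bar K$ relative to the per-$(\bar S,\bar T)$ fluctuation $O_P((nm)^{-1/2})$. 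The algebra expanding the squares, the identification of $C_1$, and the Cauchy--Schwarz elimination of $\theta$ are all routine.
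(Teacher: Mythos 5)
Your proof is correct and follows essentially the same route as the paper: expand the squares, absorb everything independent of $(\bar S,\bar T,\theta)$ into $C_1$, recognize that $\omega_\theta(\bar S_i,\bar T_j)$ is constant on the $\bar K\times\bar K$ cells of $\bar{\Sscr}\times\bar{\Tscr}$ (the paper makes this explicit by writing $\bar\theta_{st}=\bar b^\top\bar d\,\theta_{uv}$), pull out the factor $2\bar K$ by passing from $\ell_1$ to $\ell_2$ over the $\bar K^2$ entries, and then invoke Lemma~\ref{le: mcdiarmid} with $\bar K$ classes, which is exactly where the hypothesis $\bar K\le n^{1/2}$ is consumed. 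The only cosmetic difference is that you split off the term $\frac{1}{nm}\sum(A_{ij}-W_{ij})$ and bounded it as $O_P((nm)^{-1/2})$, whereas the paper simply folds it into $C_1$ (which is only required to be constant in $(\bar S,\bar T,\theta)$, not deterministic); both handlings are fine.
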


\begin{lemma}\label{le: th2 aux3}
Given $\bar{T} = (V,\bar{D}) \in \bar{\Tscr}^n$, let $\bar{\tau} = (\nu, \bar{\delta}) \in \Yscr \mapsto \bar{\Tscr}$ minimize $\|G_V^{\bar{D}} - G_\nu^{\bar{\delta}}\|$. It holds that
\begin{align}
|R_W(\bar{S}, \bar{T};\theta) - R_\omega(\bar{S}, \bar{\tau};\theta) - C_2| & \leq O_P(K\bar{K} (\log n)n^{-1/2}). \label{eq: th2 proof3} 
\end{align}
Given $\bar{S} = (U,\bar{B}) \in \bar{\Sscr}^n$, let $\bar{\sigma} = (\mu, \bar{\beta}) \in \Xscr \mapsto \bar{\Sscr}$ minimize $\|F_U^{\bar{B}} - F_\mu^{\bar{\beta}}\|$. It holds that
\begin{align}
|R_\omega(\bar{S}, \bar{\tau};\theta) - R_\omega(\bar{\sigma}, \bar{\tau}; \theta) - C_3|  & \leq O_P(K \bar{K}(\log m)m^{-1/2}). \label{eq: th2 proof4}
\end{align}
\end{lemma}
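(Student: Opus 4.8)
The plan is to expand each risk appearing in the lemma as (a term constant in the optimization variables $(\bar{S},\bar{T},\bar{\sigma},\bar{\tau},\theta)$) $+$ (a term linear in $\theta$) $+$ (a term quadratic in $\theta$), and to observe that the $\theta$-coefficients are precisely the blocks already packed into $G_V^{\bar{D}},G_\nu^{\bar{\delta}},F_U^{\bar{B}},F_\mu^{\bar{\beta}}$ and the second-moment matrices $\pi_{U=k}^{\bar{B}},\pi_{V=k}^{\bar{D}},\pi_{\mu=k}^{\bar{\beta}},\pi_{\nu=k}^{\bar{\delta}}$. Concretely, writing $R_W(S,T;\theta)=(nm)^{-1}\sum_{i,j}(W_{ij}-\omega_\theta(S_i,T_j))^2$ and using $\omega_\theta\big((u,b),(v,d)\big)=b^T d\,\theta_{uv}$, expanding the square gives
\begin{equation*}
R_W(\bar{S},\bar{T};\theta)=\frac{1}{nm}\sum_{i,j}W_{ij}^2-\frac{2}{m}\sum_{u,v}\theta_{uv}\,\ip{1_{U=u}^{\bar{B}},\,g_{V=v}^{\bar{D}}}+\sum_{u,v}\theta_{uv}^2\,\operatorname{tr}\!\big(\pi_{U=u}^{\bar{B}}\pi_{V=v}^{\bar{D}}\big),
\end{equation*}
where the inner product is Frobenius on $m\times d$ matrices, and $R_\omega(\bar{S},\bar{\tau};\theta)$ has the identical form with $g_{V=v}^{\bar{D}}$, $\pi_{V=v}^{\bar{D}}$, $\tfrac1{nm}\sum_{i,j}W_{ij}^2$ replaced by $g_{\nu=v}^{\bar{\delta}}$, $\pi_{\nu=v}^{\bar{\delta}}$, $\tfrac1m\sum_i\int_{\Yscr}\omega(x_i,y)^2\,dy$. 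Setting $C_2$ equal to the difference of these two leading constants (which depends only on the latent variables, not on the optimization variables) then leaves
\begin{equation*}
R_W(\bar{S},\bar{T};\theta)-R_\omega(\bar{S},\bar{\tau};\theta)-C_2=-\frac{2}{m}\sum_{u,v}\theta_{uv}\,\ip{1_{U=u}^{\bar{B}},\,g_{V=v}^{\bar{D}}-g_{\nu=v}^{\bar{\delta}}}+\sum_{u,v}\theta_{uv}^2\,\operatorname{tr}\!\big(\pi_{U=u}^{\bar{B}}(\pi_{V=v}^{\bar{D}}-\pi_{\nu=v}^{\bar{\delta}})\big).
\end{equation*}

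Next I would bound this uniformly over $\bar{S}\in\bar{\Sscr}^m$ and $\theta\in[0,1]^{K\times K}$ using $|\theta_{uv}|\le 1$ together with Cauchy--Schwarz applied twice (once inside each inner product/trace and once over the $(u,v)$-sum). The linear term is $\le\tfrac2m\big(\sum_u\|1_{U=u}^{\bar{B}}\|\big)\big(\sum_v\|g_{V=v}^{\bar{D}}-g_{\nu=v}^{\bar{\delta}}\|\big)$; here $\sum_u\|1_{U=u}^{\bar{B}}\|\le\sqrt{K}\,(\sum_u\|1_{U=u}^{\bar{B}}\|^2)^{1/2}\le\sqrt{Km}$ by the preliminary bound $m^{-1}\sum_k\|1_{U=k}^{\bar{B}}\|^2\le1$, while $\sum_v\|g_{V=v}^{\bar{D}}-g_{\nu=v}^{\bar{\delta}}\|\le\sqrt{K}\,(\sum_v\|g_{V=v}^{\bar{D}}-g_{\nu=v}^{\bar{\delta}}\|^2)^{1/2}\le\sqrt{Km}\,\|G_V^{\bar{D}}-G_\nu^{\bar{\delta}}\|$ because $m^{-1}\sum_v\|g_{V=v}^{\bar{D}}-g_{\nu=v}^{\bar{\delta}}\|^2$ is one of the blocks of $\|G_V^{\bar{D}}-G_\nu^{\bar{\delta}}\|^2$, so the linear term is $\le2K\|G_V^{\bar{D}}-G_\nu^{\bar{\delta}}\|$. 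The quadratic term is $\le\big(\sum_u\|\pi_{U=u}^{\bar{B}}\|_F\big)\big(\sum_v\|\pi_{V=v}^{\bar{D}}-\pi_{\nu=v}^{\bar{\delta}}\|_F\big)\le\sqrt{K}\,\|G_V^{\bar{D}}-G_\nu^{\bar{\delta}}\|$, using $|\operatorname{tr}(AB)|\le\|A\|_F\|B\|_F$ and the preliminary $\sum_k\|\pi_{U=k}^{\bar{B}}\|_F\le1$. Finally, Lemmas \ref{le: th2 le2} and \ref{le: th2 le3} give $d_{\textrm{Haus}}(\operatorname{conv}(\bar{\Gscr}_n),\bar{\Gscr})=O_P(\bar{K}(\log n)n^{-1/2})$, so, exactly as in the derivation of \eqref{eq: step hausdorff G}, letting $\bar{\tau}$ minimize $\|G_V^{\bar{D}}-G_\nu^{\bar{\delta}}\|$ yields $\max_{\bar{T}}\|G_V^{\bar{D}}-G_\nu^{\bar{\delta}}\|=O_P(\bar{K}(\log n)n^{-1/2})$, and \eqref{eq: th2 proof3} follows. (The coordinate $\Psi_{\bar{T}}/(Kd)$ of $G_V^{\bar{D}}$ never enters the risk difference; it is carried along only so that this same $\bar{\tau}$ simultaneously controls $\|\Psi_{\bar{T}}-\Psi_{\bar{\tau}}\|$.)

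For \eqref{eq: th2 proof4} the argument is the mirror image on the $\Xscr$-side: expanding $R_\omega(\bar{S},\bar{\tau};\theta)$ and $R_\omega(\bar{\sigma},\bar{\tau};\theta)$ and subtracting the constant $C_3=\tfrac1m\sum_i\int_{\Yscr}\omega(x_i,y)^2\,dy-\int_{\Xscr\times\Yscr}\omega(x,y)^2\,dx\,dy$ leaves
\begin{equation*}
R_\omega(\bar{S},\bar{\tau};\theta)-R_\omega(\bar{\sigma},\bar{\tau};\theta)-C_3=-2\sum_{u,v}\theta_{uv}\,\ip{f_{U=u}^{\bar{B}}-f_{\mu=u}^{\bar{\beta}},\,1_{\nu=v}^{\bar{\delta}}}+\sum_{u,v}\theta_{uv}^2\,\operatorname{tr}\!\big((\pi_{U=u}^{\bar{B}}-\pi_{\mu=u}^{\bar{\beta}})\pi_{\nu=v}^{\bar{\delta}}\big),
\end{equation*}
and the same two Cauchy--Schwarz steps bound this by $(2K+\sqrt{K})\,\|F_U^{\bar{B}}-F_\mu^{\bar{\beta}}\|$, now using $\sum_v\|1_{\nu=v}^{\bar{\delta}}\|^2\le\int_{\Yscr}\|\bar{\delta}(y)\|^2\,dy\le1$ and $\sum_v\|\pi_{\nu=v}^{\bar{\delta}}\|_F\le\int_{\Yscr}\|\bar{\delta}(y)\|^2\,dy\le1$ (both valid since $\bar{\delta}(y)\in\Dscr$) in place of the $\bar{B}$-side bounds; combined with $d_{\textrm{Haus}}(\operatorname{conv}(\bar{\Fscr}_m),\bar{\Fscr})=O_P(\bar{K}(\log m)m^{-1/2})$ from Lemmas \ref{le: th2 le2} and \ref{le: th2 le3} and the choice of $\bar{\sigma}$ minimizing $\|F_U^{\bar{B}}-F_\mu^{\bar{\beta}}\|$, this gives the stated rate. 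The step I expect to require the most care is the bilinear-to-inner-product bookkeeping in the first paragraph: one has to check that, after expanding $(\,\cdot\,)^2$ and collecting coefficients, the coefficients of $\theta_{uv}$ and of $\theta_{uv}^2$ coincide exactly with the blocks packed into $G_V^{\bar{D}}$ (resp. $F_U^{\bar{B}}$) and the second-moment matrices, $m^{-1/2}$ normalization included, so that the two Cauchy--Schwarz factors multiply to exactly $K$ — not $K^{3/2}$ — times the relevant Hausdorff distance. Everything after that is routine.
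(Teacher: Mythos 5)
Your proof is correct and follows essentially the same route as the paper's: expand each risk into a constant plus a term linear in $\theta$ plus a term quadratic in $\theta$, recognize the coefficients as $[\Phi_Z]_{uv} = m^{-1}\ip{1_{U=u}^{\bar{B}},g_{V=v}^{\bar{D}}}$ and $\ip{\pi_{U=u}^{\bar{B}},\pi_{V=v}^{\bar{D}}} = \operatorname{tr}(\pi_{U=u}^{\bar{B}}\pi_{V=v}^{\bar{D}})$, bound both via Cauchy--Schwarz together with the normalizations $m^{-1}\sum_u\|1_{U=u}^{\bar{B}}\|^2\le1$ and $\sum_u\|\pi_{U=u}^{\bar{B}}\|_F\le1$, and invoke Lemmas \ref{le: th2 le2} and \ref{le: th2 le3} to bound $\|G_V^{\bar{D}}-G_\nu^{\bar{\delta}}\|$ by the Hausdorff distance. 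The only cosmetic difference is that the paper routes the linear term through the intermediate bound $\|\Phi_Z-\Phi_\zeta\|_1\le K\|\Phi_Z-\Phi_\zeta\|_F$ before applying Cauchy--Schwarz, whereas you apply Cauchy--Schwarz to the $(u,v)$-sum directly, yielding the same $O(K)$ factor.
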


\subsection{Proof of Theorem \ref{th: generalized}}

\begin{proof}[Proof of Theorem \ref{th: generalized}]

Given $\bar{T} = (V,\bar{D})$, let $\bar{\tau} = (\nu, \bar{\delta})$ minimize $\|G_V^{\bar{D}} - G_\nu^{\bar{\delta}}\|$, which by Lemmas \ref{le: th2 le2} and \ref{le: th2 le3} is bounded by $O_P(\bar{K}(\log n) n^{-1/2})$. Using this fact and \eqref{eq: th2 proof1a}, the quantity $\|\Psi_T - \Psi_{\tau}\|^2$ can be bounded by
\begin{align} 
\nonumber	\|\Psi_T - \Psi_{\tau}\|^2 & \leq 2\|\Psi_T - \Psi_{\bar{T}}\|^2 + 2\|\Psi_{\bar{T}} - \Psi_{\bar{\tau}}\|^2 \\
\nonumber	& \leq 2\|\Psi_T - \Psi_{\bar{T}}\|^2 + 2\|G_V^{\bar{D}} - G_\nu^{\bar{\delta}}\|^2 \\
	& \leq 2Kd\epsilon + O_P(\bar{K}^2(\log n)n^{-1}),\label{eq: Psi_T}.
\end{align}
Using \eqref{eq: th2 proof1}, \eqref{eq: th2 proof2}, \eqref{eq: th2 proof3}, \eqref{eq: th2 proof4}, and \eqref{eq: Psi_T}, it holds for $\bar{K} \leq n^{1/2}$ that 
\begin{align} 
\nonumber |R_A(S,T;\theta) - R_\omega(S,\bar{\tau};\theta) - C_1 - C_2| + \frac{\|\Psi_T - \Psi_{\bar{\tau}}\|^2}{Kd} & \leq |R_A(S,T;\theta) - R_A(\bar{S}, \bar{T};\theta)|\\
\nonumber & \qquad {} + |R_A(\bar{S}, \bar{T};\theta) - R_W(\bar{S}, \bar{T}; \theta) - C_1 | \\
\nonumber & \qquad {} + |R_W(\bar{S}, \bar{T}; \theta) - R_\omega(\bar{S}, \bar{\tau};\theta) - C_2| \\
\nonumber & \qquad {} + |R_\omega(\bar{S}, \bar{\tau};\theta) - R_\omega(S, \bar{\tau};\theta)| \\
\nonumber & \qquad {} + \frac{\|\Psi_T - \Psi_{\tau}\|^2}{Kd} \\
& \leq 26\epsilon + O_P\left(\bar{K}^2 \frac{\log(n)}{n}\right) + O_P\left(K\bar{K} \frac{\log(n)}{n^{1/2}}\right). \label{eq: th2 proof5}
\end{align}
Using $\bar{K} \leq K(d^{1/2} \epsilon^{-1})^d$ and letting $\epsilon = \left(\frac{K^2 d^{d/2} \log n}{n^{1/2}}\right)^{\frac{1}{1+d}}$ yields that $\bar{K} \leq n^{1/2}$ eventually, so that substituting into \eqref{eq: th2 proof5} yields
\[ |R_A(S,T;\theta) - R_\omega(S,\bar{\tau};\theta) - C_1 - C_2| + \frac{\|\Psi_T - \Psi_{\bar{T}}\|^2}{Kd} \leq O_P\left(d^{1/2} \left(\frac{K^2 \log n}{n^{1/2}}\right)^{\frac{1}{1+d}}\right),\]
proving \eqref{eq: R_tau}.

Similarly, it holds that
\begin{align*}
|R_\omega(S,\tau;\theta) - R_\omega(\bar{\sigma}, \tau;\theta) - C_3| + \frac{\|\Psi_S - \Psi_{\bar{\sigma}}\|^2}{Kd} &\leq |R_\omega(S,\tau;\theta) - R_\omega(\bar{S}, \bar{\tau};\theta)| \\
& \qquad {} + |R_\omega(\bar{S}, \bar{\tau};\theta) - R_\omega(\bar{\sigma}, \bar{\tau};\theta) - C_3| \\
& \qquad {} + |R_\omega(\bar{\sigma}, \bar{\tau};\theta) - R_\omega(\bar{\sigma}, \tau;\theta)|  \\
& \qquad {} + \frac{\|\Psi_S - \Psi_{\bar{\sigma}}\|}{Kd} \\
& \leq 26 \epsilon + O_P\left(\bar{K}^2 \frac{\log(m)}{m}\right) + O_P\left(K\bar{K} \frac{\log(m)}{m^{1/2}}\right),
\end{align*}
and letting $\epsilon = \left(\frac{K^2 d^{d/2} \log m}{m^{1/2}}\right)^{\frac{1}{1+d}}$ proves \eqref{eq: R_sigma}.
\end{proof}

\subsection{Proof of Lemmas \ref{le: th2 le2} - \ref{le: th2 aux3}}

\begin{proof}[Proof of Lemma \ref{le: th2 le2}]
Let  $H = (h_1, \ldots, h_K, \pi_1, \ldots, \pi_v, \Psi_H)$, where $h_k \in \mathbb{R}^{m \times d}, \pi_k \in \mathbb{R}^{d\times d}$, and $\Psi_H:[K]\times \Dscr \mapsto [0,1]$. Given $(v,d) \in [K] \times \Dscr$, let $1_{v,d}: [K] \times \Dscr \mapsto [0,1]$ denote the indicator function
\[ 1_{v,d}(k,c) = 1\{v \leq k, d \leq c\}.\]
Given $G_V^{\bar{D}} \in \bar{\Gscr}_n$ and $G_\nu^{\bar{\delta}} \in \bar{\Gscr}$, the inner products $\ip{H, G_V^{\bar{D}}}$ and $\ip{H, G_\nu^{\bar{\delta}}}$ equal
\begin{align*}
\ip{H, G_V^{\bar{D}}} & = \sum_{k=1}^K \ip{h_k, \frac{g_{V=k}^{\bar{D}}}{\sqrt{m}}} + \sum_{k=1}^K \ip{\pi_k, \pi_{V=k}^{\bar{D}}} + \frac{1}{Kd}\ip{\Psi_H, \Psi_{\bar{T}}}\\
& = \frac{1}{n} \sum_{j=1}^n \left[ \ip{h_{V_j}, \frac{g_{y_j}\bar{D}_j^T }{\sqrt{m}}} + \ip{\pi_{V_j}, \bar{D}_j \bar{D}_j^T} + \frac{1}{Kd}\ip{\Psi_H, 1_{V_j,\bar{D}_j}}\right] \\
\ip{H, G_\nu^{\bar{\delta}}} & = \sum_{k=1}^K \ip{h_k, \frac{g_{\nu=k}^{\bar{\delta}}}{\sqrt{m}}} + \sum_{k=1}^K \ip{\pi_k, \pi_{\nu=k}^{\bar{\delta}}} + \frac{1}{Kd}\ip{\Psi_H, \Psi_{\bar{\tau}}}\\
& = \int_{\Yscr} \ip{h_{\nu(y)}, \frac{ g_y\bar{\delta}(y)^T}{\sqrt{m}}} + \ip{\pi_{\nu(y)}, \bar{\delta}(y)\bar{\delta}(y)^T} + \frac{1}{Kd}\ip{\Psi_H, 1_{\nu(y), \bar{\delta}(y)}}\, dy.
\end{align*}
It follows that the support functions $\Gamma_{\bar{\Gscr}_n}$ and $\Gamma_{\bar{\Gscr}}$ equal
\begin{align*}
\Gamma_{\bar{\Gscr}_n}(H) & = \frac{1}{n} \sum_{j=1}^n \max_{v \in [K], c \in \bar{\Dscr}} \left[ \ip{h_v , \frac{g_{y_j}c^T}{\sqrt{m}}} + \ip{\pi_{v}, cc^T} + \frac{1}{Kd}\ip{\Psi_H, 1_{v,c}}\right] \\
& = \frac{1}{n} \sum_{j=1}^n \max_{(v,c) \in \bar{\Tscr}} \ip{\left[\begin{array}{c} h_vc \\ \ip{\pi_v,cc^T} \\ (Kd)^{-1}\ip{\Psi_H, 1_{v,c}}\end{array}\right], \left[\begin{array}{c} m^{-1/2} g_{y_j} \\ 1 \\ 1\end{array}\right]} \\
\Gamma_{\bar{\Gscr}}(H) &= \int_\Yscr \max_{v \in [K], c \in \bar{\Dscr}} \left[ \ip{h_v , \frac{g_{y}c^T}{\sqrt{m}}} + \ip{\pi_{v}, cc^T} + \frac{1}{Kd}\ip{\Psi_H, 1_{v,c}}\right] \\
& = \int_\Yscr \max_{(v,c) \in \bar{\Tscr}} \ip{\left[\begin{array}{c} h_vc \\ \ip{\pi_v,cc^T} \\ (Kd)^{-1}\ip{\Psi_H, 1_{v,c}}\end{array}\right], 
\left[\begin{array}{c} m^{-1/2} g_{y} \\ 1 \\ 1 \end{array}\right]}\, dy.
\end{align*}
Given $t = (v,c) \in \bar{\Tscr}$, let $h_t' = \left[\begin{array}{c} h_v c \\ \ip{\pi_v, cc^T} \\ (Kd)^{-1}\ip{\Psi_H, 1_{v,c}}\end{array}\right]$. Since $\|c\|\leq 1$ and $\| (Kd)^{-1}1_{v,c}\| \leq 1/d$, it follows that $\|h_t'\|^2 \leq \|h_v\|_F^2 + \|\pi_v\|_F^2 + \|\Psi_H\|^2/d^2$, where $\|\cdot\|_F$ denotes Frobenius norm, so that if $\|H\|\leq 1$, then $\|h_t'\| \leq 1$ for all $t \in \bar{\Tscr}$. As a result, the proof of Lemma \ref{le: supporting hyperplanes} can be copied here: Lemma \ref{le: Biau} implies that 
\[ \mathbb{E}\sup_{\|H\|=1} |\Gamma_{\bar{\Gscr}}(H) - \Gamma_{\bar{\Gscr}_n}(H)| \leq \frac{6\bar{K}}{\sqrt{n}},\]
and McDiarmid's inequality applied to $Z = \sup_{\|H\|=1} |\Gamma_{\bar{\Gscr}_n}(H) - \Gamma_{\bar{\Gscr}}(H)|$ implies that $Z - \mathbb{E}Z = O_P(n^{-1/2}\log n)$.

The proof for $\sup_{\|H\|=1} |\Gamma_{\bar{\Fscr}_m}(H) - \Gamma_{\bar{\Fscr}}(H)|$ follow parallel arguments.
\end{proof}.

\begin{proof}[Proof of Lemma \ref{le: th2 le3}]
Enumerate the members of $\bar{\Tscr}$ as $1,\ldots,\bar{K}$. Given $(u, c) \in \bar{\Tscr}$, let $t(u,c)$ denote its corresponding index in $1,\ldots,\bar{K}$. Given $\bar{T} = (V,\bar{D}) \in \bar{\Tscr}$, recall the definition of 
\[G_{\bar{T}} = \left( \frac{g_{{\bar{T}=1}}}{\sqrt{m}}, \ldots, \frac{g_{{\bar{T}=\bar{K}}}}{\sqrt{m}}, \pi_{{\bar{T}}}  \right),\]
a vector in $\mathbb{R}^{m\bar{K} + \bar{K}}$.  It can be seen that 
\[G_V^{\bar{D}} = \left( \frac{g_{V=1}^{{\bar{D}}}}{\sqrt{m}}, \ldots, \frac{g_{V=K}^{\bar{D}}}{\sqrt{m}}, \pi_{V=1}^{\bar{D}}, \ldots, \pi_{V=K}^{\bar{D}}, \frac{\Psi_{\bar{T}}}{Kd}\right)\] is a linear transformation of $G_{\bar{T}}$, given by
\begin{align*}
g_{V=k}^{\bar{D}} &= \sum_{c \in \bar{\Dscr}} g_{{\bar{T}}=t(k,c)}c^T, \qquad k \in [K] \\
\pi_{V=k}^{\bar{D}} &= \sum_{c \in \bar{\Dscr}} \pi_{\bar{T}}(t(k,c)) cc^T, \qquad k \in [K] \\
\Psi_{\bar{T}} &= \sum_{k=1}^K \sum_{c \in \bar{\Dscr}} \pi_{t(k,c)} 1_{k,c}.
\end{align*}
By Lemma \ref{le: convex}, it holds that $\Gscr = \{G_{\bar{T}}: \bar{T} \in \bar{\Tscr}\}$ is convex. Since $\bar{\Gscr} = \{G_V^{\bar{D}}: \bar{T}= (V,\bar{D}) \in \bar{\Tscr}\}$ is related to $\Gscr$ by a linear transform, and as linear transformations preserve convexity, it follows that $\bar{\Gscr}$ is also convex. By parallel arguments, it also follows that $\bar{\Fscr}$ is a linear transformation of $\Fscr$ and hence convex as well.
\end{proof}

\begin{proof}[Proof of Lemma \ref{le: th2 aux1}]
If $\|B_i - \bar{B_i}\| \leq \epsilon$ for all $i\in[m]$ and $\|D_i - \bar{D}_i\| \leq \epsilon$ for all $j \in [n]$, then
\begin{align*}
|R_A(S,T;\theta) - R_A(\bar{S}, \bar{T};\theta)| & \leq \left|\frac{1}{mn} \sum_{i=1}^m \sum_{j=1}^n (A_{ij} - B_i^TD_j \theta_{U_i V_j})^2 - (A_{ij} - \bar{B}_i^T \bar{D}_j \theta_{U_i V_j})^2 \right| \\
 &\leq 12\epsilon,
\end{align*}
where we use the fact that $\|B_i\|, \|D_j\|, \theta_{uv}$, and $A_{ij}$ are all between $0$ and $1$. By similar arguments, it also holds that
\begin{align*}
	|R_\omega(S,\tau;\theta) - R_\omega(\bar{S}, \tau;\theta)| & \leq 12\epsilon\\
	|R_\omega(\sigma, \tau;\theta) - R_\omega(\sigma, \bar{\tau}; \theta)| &\leq 12 \epsilon.
\end{align*}
We also show that $\|\Psi_S - \Psi_{\bar{S}}\|^2 \leq Kd\epsilon$ by 
\begin{align*}
	\| \Psi_S - \Psi_{\bar{S}}\|^2 & = \sum_{k=1}^K \int_{\lbrack 0,1)^d} \left[ \frac{1}{m} \sum_{i=1}^m \big( 1\{U_i \leq k, \beta_i \leq c\} - 1\{U_i \leq k, \bar{\beta}_i \leq c\}\big) \right]^2\, dc \\
	& \leq \sum_{k=1}^K \int_{ \lbrack 0,1)^d} \frac{1}{m} \sum_{i=1}^m \big[ 1\{U_i \leq k, \beta_i \leq c\} - 1\{U_i \leq k, \bar{\beta}_i \leq c\}\big]^2\, dc \\
	& = \frac{1}{m} \sum_{i=1}^m \sum_{k=1}^K \int_{ \lbrack 0,1)^d} \big| 1\{U_i \leq k, \beta_i \leq c\} - 1\{U_i \leq k,\bar{\beta}_i \leq c\} \big|\, dc \\
	& \leq Kd\epsilon, 
\end{align*}
where the first inequality holds by Jensen's inequality, and the second inequality holds because $\|\beta_i - \bar{\beta}_i\| \leq \epsilon$ and the integral is over $[0,1)^d$. The quantities $\|\Psi_T - \Psi_{\bar{T}}\|^2, \|\Psi_\sigma - \Psi_{\bar{\sigma}}\|^2$, etc., are bounded similarly.
\end{proof}

\begin{proof}[Proof of Lemma \ref{le: th2 aux2}]
Given $\theta \in [0,1]^{K \times K}$, let $\bar{\theta} \in [0,1]^{\bar{K} \times \bar{K}}$ be given by
\[\bar{\theta}_{st} = \bar{b}^T\bar{d}\, \theta_{uv} \qquad \text{for all } s = (u,\bar{b}) \in \bar{\Sscr}, t = (v,\bar{d}) \in \bar{\Tscr}.\]
For $\bar{S} = (U, \bar{B}) \in \bar{S}^m$ and $\bar{T} = (V, \bar{D}) \in \bar{T}^n$, 
\begin{align} \label{eq: th2 diff2}
R_A(\bar{S}, \bar{T};\theta) - R_W(\bar{S}, \bar{T}; \theta) = C_1 - 2\sum_{s \in \bar{\Sscr}} \sum_{t \in \bar{\Tscr}} ([\Phi_A(\bar{S},\bar{T})]_{st} - [\Phi_W(\bar{S}, \bar{T})]_{st}) \bar{\theta}_{st},
\end{align}
where $C_1$ is constant in $(\bar{S},\bar{T},\theta)$. This implies
\begin{align*}
 |R_A(\bar{S}, \bar{T};\theta) - R_W(\bar{S}, \bar{T}; \theta) - C_1| &\leq 2\|\Phi_A(\bar{S},\bar{T}) - \Phi_W(\bar{S}, \bar{T})\|_1 \\
& \leq 2 \bar{K}\|\Phi_A(\bar{S},\bar{T}) - \Phi_W(\bar{S}, \bar{T})\|_2 \\
& \leq 2 \bar{K} O_P(\bar{K}(\log n)n^{-1})
\end{align*}
where the inequalities follow by \eqref{eq: th2 diff2}, the equivalence of norms, and Lemma \ref{le: mcdiarmid}, which requires  $\bar{K} \leq n^{1/2}$. 
\end{proof}

\begin{proof}[Proof of Lemma \ref{le: th2 aux3}]
It holds that
\begin{align*}
\nonumber R_W(\bar{S}, \bar{T}; \theta) - R_\omega(\bar{S}, \bar{\tau};\theta) & = C_2 - 2\sum_{u=1}^K \sum_{v=1}^K \left( [\Phi_Z(U,V)]_{uv} - [\Phi_\zeta(U,V)]_{uv}\right) \theta_{uv} \\
& \qquad { } + \sum_{u=1}^K \sum_{v=1}^K \left( \ip{\pi_{U=u}^{\bar{B}}, \pi_{V=v}^{\bar{b}}} - \ip{\pi_{U=u}^{\bar{B}}, \pi_{\nu=v}^{\bar{\delta}}}\right)\theta_{uv}^2. 
\end{align*}
where $C_2$ is constant in $\bar{S},\bar{T},\theta$ and $\bar{\tau}$. This implies
\begin{align}
\nonumber |R_W(\bar{S}, \bar{T};\theta) - R_\omega(\bar{S}, \bar{\tau};\theta) - C_2| & \leq \|\Phi_Z(U,V) - \Phi_\zeta(U, \nu)\|_1 \\
\nonumber & \qquad { }+ \left(\sum_{u=1}^K \|\pi_{U=u}^{\bar{B}}\| \right) \left(\sum_{v=1}^K \|\pi_{V=v}^{\bar{b}} - \pi_{\nu=v}^{\bar{\delta}}\|\right) \\
& \leq K \|\Phi_Z(U,V) - \Phi_\zeta(U, \nu)\| + \sqrt{K} \left(\sum_{v=1}^K \| \pi_{V=v}^{\bar{b}} - \pi_{\nu=v}^{\bar{b}}\|^2\right)^{1/2}, \label{eq: th2 diff3}
\end{align}
where the final inequality uses the fact that $\sum_{u=1}^K \|\pi_{U=u}^{\bar{B}}\| \leq 1$.

It can be seen that the entries of $\Phi_Z(U,V)$ and $\Phi_\zeta(U, \nu)$ equal the inner products
\begin{align*}
[\Phi_Z(U,V)]_{uv} &= \frac{1}{m} \ip{1_{U=u}^{\bar{B}}, g_{V=v}^{\bar{D}}}  & [\Phi_\zeta(U,\nu)]_{uv} &= \frac{1}{m} \ip{1_{U=u}^{\bar{B}}, g_{\nu = v}^{\bar{\delta}}}, 
\end{align*}
which implies
\begin{align} 
\nonumber \|\Phi_Z(U,V) - \Phi_{\zeta}(U, \nu)\|^2 & \leq \left(\sum_{u=1}^K \frac{1}{m} \|1_{U=u}^{\bar{B}} \|^2 \right) \left(\sum_{v=1}^K \frac{1}{m}\|g_{V=v}^{\bar{D}} - g_{\nu=v}^{\bar{\delta}}\|^2\right) \\
& \leq \sum_{v=1}^K \frac{1}{m}\|g_{V=v}^{\bar{D}} - g_{\nu=v}^{\bar{\delta}}\|^2, \label{eq: th2 ip}
\end{align}
Given $G_V^{\bar{D}} \in \bar{\Gscr}_n$, let $G_{\nu}^{\bar{\delta}} \in \bar{\Gscr}$ minimize $\|G_V^{\bar{D}} - G_{\nu}^{\bar{\delta}}\|$. Using \eqref{eq: th2 diff3}, \eqref{eq: th2 ip} and Lemma \ref{le: th2 le2} implies 
\begin{align*}
|R_W(\bar{S}, \bar{T};\theta) - R_\omega(\bar{S}, \bar{\tau};\theta) - C_2| & \leq \sqrt{2}K \|G_V^{\bar{D}} - G_{\nu}^{\bar{\delta}}\| \\
& \leq O_P(K\bar{K} (\log n)n^{-1/2}) 
\end{align*}

Similarly, it holds that 
\begin{align*}
\nonumber R_\omega(\bar{S}, \bar{\tau};\theta) - R_\omega(\bar{\sigma}, \bar{\tau}; \theta) & = C_3 - 2 \sum_{u=1}^K \sum_{v=1}^K \left( [\Phi_\zeta(U, \nu)]_{uv} - [\Phi_\zeta(\mu, \nu)]_{uv}\right)\theta_{uv} \\
& \qquad { } + \sum_{u=1}^K \sum_{v=1}^K \left(\ip{\pi_{U=u}^{\bar{B}}, \pi_{\nu=v}^{\bar{\delta}}} - \ip{\pi_{\mu=u}^{\bar{\beta}}, \pi_{\nu=v}^{\bar{\delta}}}\right)\theta_{uv}^2, 
\end{align*}
where $C_3$ is constant in $\bar{S}, \bar{\tau}, \bar{\sigma}$ and $\theta$. This implies
\begin{align}
\nonumber |R_\omega(\bar{S}, \bar{\tau};\theta) - R_\omega(\bar{\sigma}, \bar{\tau}; \theta) - C_3| &\leq 2 \|\Phi_\zeta(U,\nu) - \Phi_\zeta(\mu, \nu)\|_1 \\
\nonumber & \qquad {} + \left(\sum_{u=1}^K \|\pi_{U=u}^{\bar{B}} - \pi_{\mu=u}^{\bar{\beta}}\|\right) \left(\sum_{v=1}^K \| \pi_{\nu=v}^{\bar{\delta}}\|\right) \\
&\leq 2K \|\Phi_\zeta(U,\nu) - \Phi_\zeta(\mu, \nu)\| + \sqrt{K} \left(\sum_{u=1}^K \|\pi_{U=u}^{\bar{B}} - \pi_{\mu=u}^{\bar{\beta}}\|^2\right)^{1/2}, \label{eq: th2 diff4}
\end{align}
where we have used $\sum_v \|\pi_{\nu=v}^{\bar{\delta}}\| \leq 1$. 

The entries of $\Phi_\zeta(U, \nu)$, and $\Phi_\zeta(\mu,\nu)$ equal the inner products
\[
[\Phi_\zeta(U,\nu)]_{uv} = \ip{f_{U=u}^{\bar{B}}, 1_{\nu=v}^{\bar{\delta}}} \qquad \text{and} \qquad [\Phi_\zeta(\mu, \nu)]_{uv} = \ip{f_{\mu=u}^{\bar{\beta}}, 1_{\nu=v}^{\bar{\delta}}},
\]
which implies 
\begin{align}
\nonumber \|\Phi_\zeta(U,\nu) - \Phi_\zeta(\mu,\nu)\|^2 & \leq \left(\sum_{u=1}^K \frac{1}{m} \|f_{U=u}^{\bar{B}} - f_{\mu=u}^{\bar{\beta}}\|^2 \right) \left(\sum_{v=1}^K \frac{1}{m}\| 1_{\nu=v}^{\bar{\delta}} \|^2\right) \\
& \leq \sum_{u=1}^K \frac{1}{m} \|f_{U=u}^{\bar{B}} - f_{\mu=u}^{\bar{\beta}}\|^2. \label{eq: th2 ip2}
\end{align}
Given $F_U^{\bar{B}} \in \bar{\Fscr}_m$, let $F_{\mu}^{\bar{\beta}} \in \bar{\Fscr}$ minimize $\|F_U^{\bar{B}} - F_{\mu}^{\bar{\beta}}\|$. It follows from \eqref{eq: th2 diff4}, \eqref{eq: th2 ip2}, and Lemma \ref{le: th2 le2} that 
\begin{align*}
\nonumber |R_\omega(\bar{S}, \bar{\tau};\theta) - R_\omega(\bar{\sigma}, \bar{\tau}; \theta) - C_3|  & \leq \sqrt{2}K \|F_U^{\bar{B}} - F_{\mu}^{\bar{\beta}}\| \\
& \leq O_P(K \bar{K}(\log m)m^{-1/2}).
\end{align*}
\end{proof}

\bibliographystyle{plain}
\bibliography{bibfile}
\end{document}